\pdfoutput=1
\RequirePackage{ifpdf}
\ifpdf 
\documentclass[pdftex]{sigma}
\else
\documentclass{sigma}
\fi

\numberwithin{equation}{section}

 \newtheorem{thm}{Theorem}[section]
 \newtheorem{prp}[thm]{Proposition}
 \newtheorem{lem}[thm]{Lemma}
 \newtheorem{cor}[thm]{Corollary}
 \newtheorem{cnj}[thm]{Conjecture}
 { \theoremstyle{definition}
 \newtheorem{dfn}[thm]{Definition}
 \newtheorem{rmk}[thm]{Remark}}

\newcommand{\bbC}{\mathbb{C}}
\newcommand{\bbZ}{\mathbb{Z}}

\def\vector#1{\mbox{\boldmath $#1$}}

\begin{document}
\allowdisplaybreaks

\newcommand{\arXivNumber}{2002.02148}

\renewcommand{\thefootnote}{}

\renewcommand{\PaperNumber}{084}

\FirstPageHeading

\ShortArticleName{Branching Rules for Koornwinder Polynomials}

\ArticleName{Branching Rules for Koornwinder Polynomials\\ with One Column Diagrams and Matrix Inversions\footnote{This paper is a~contribution to the Special Issue on Elliptic Integrable Systems, Special Functions and Quantum Field Theory. The full collection is available at \href{https://www.emis.de/journals/SIGMA/elliptic-integrable-systems.html}{https://www.emis.de/journals/SIGMA/elliptic-integrable-systems.html}}}

\Author{Ayumu HOSHINO~$^\dag$ and Jun'ichi SHIRAISHI~$^\ddag$}

\AuthorNameForHeading{A.~Hoshino and J.~Shiraishi}

\Address{$^\dag$~Hiroshima Institute of Technology, Miyake, Hiroshima 731-5193, Japan}
\EmailD{\href{mailto:a.hoshino.c3@it-hiroshima.ac.jp}{a.hoshino.c3@it-hiroshima.ac.jp}}

\Address{$^\ddag$~Graduate School of Mathematical Sciences, University of Tokyo,\\
\hphantom{$^\ddag$}~Komaba, Tokyo 153-8914, Japan}
\EmailD{\href{mailto:shiraish@ms.u-tokyo.ac.jp}{shiraish@ms.u-tokyo.ac.jp}}

\ArticleDates{Received April 02, 2020, in final form August 18, 2020; Published online August 25, 2020}

\Abstract{We present an explicit formula for the transition matrix $\mathcal{C}$ from the type $BC_n$ Koornwinder polynomials $P_{(1^r)}(x|a,b,c,d|q,t)$ with one column diagrams, to the type $BC_n$ monomial symmetric polynomials $m_{(1^{r})}(x)$. The entries of the matrix $\mathcal{C}$ enjoy a set of four terms recursion relations. These recursions provide us with the branching rules for the Koornwinder polynomials with one column diagrams, namely the restriction rules from $BC_n$ to $BC_{n-1}$. To have a good description of the transition matrices involved, we introduce the following degeneration scheme of the Koornwinder polynomials: $P_{(1^r)}(x|a,b,c,d|q,t) \longleftrightarrow P_{(1^r)}(x|a,-a,c,d|q,t)\longleftrightarrow P_{(1^r)}(x|a,-a,c,-c|q,t) \longleftrightarrow P_{(1^r)}\big(x|t^{1/2}c,-t^{1/2}c,c,-c|q,t\big) \longleftrightarrow
P_{(1^r)}\big(x|t^{1/2},-t^{1/2},1,-1|q,t\big)$. We prove that the transition matrices associated with each of these degeneration steps are given in terms of the matrix inversion formula of Bressoud. As an application, we give an explicit formula for the Kostka polynomials of type $B_n$, namely the transition matrix from the Schur polynomials $P^{(B_n,B_n)}_{(1^r)}(x|q;q,q)$ to the Hall--Littlewood polynomials $P^{(B_n,B_n)}_{(1^r)}(x|t;0,t)$. We also present a conjecture for the asymptotically free eigenfunctions of the $B_n$ $q$-Toda operator, which can be regarded as a branching formula from the $B_n$ $q$-Toda eigenfunction restricted to the $A_{n-1}$ $q$-Toda eigenfunctions.}

\Keywords{Koornwinder polynomial; degeneration scheme; Kostka polynomial of type $B_n$; $q$-Toda eigenfunction}

\Classification{33D52; 33D45}

\renewcommand{\thefootnote}{\arabic{footnote}}
\setcounter{footnote}{0}

\section{Introduction}
This article is a continuation of~\cite{HS}. Recall that in the work of Lassalle~\cite{L}, Bressoud's matrix inversion formula \cite{B} is extensively used to describe the transition matrices associated with the Macdonald polynomials of types $B_n$, $C_n$ and $D_n$ with one column diagrams. One of our motivations in the present paper is to establish a generalization of Lassalle's principle to the case of the Koornwinder polynomials~\cite{Ko} $P_{(1^r)}(x|a,b,c,d|q,t)$ with full six parameters $a$, $b$, $c$, $d$, $q$ and $t$. (As for the definition of~$P_{(1^r)}(x|a,b,c,d|q,t)$, see Section~\ref{F-Koornwinder}.)
Our starting point is the new version (Theorem~\ref{HSnew}) of our previous fourfold summation formula obtained in~\cite{HS} (Theorem~\ref{HS}). We show that the new fourfold formula can be understood as a product of four Bressoud matrices (Theorem~\ref{dmat}), thereby giving us the corresponding inversion formulas automatically (Theorem~\ref{dmat-dual}).

Another motivation comes from the transition matrix
 $\mathcal{C}^{(n)}$ (Definition~\ref{C^n}) from the monomial polynomials $m_{(1^r)}(x)$ to the Koornwinder polynomials $P_{(1^r)}(x|a,b,c,d|q,t)$.
 (As for
$m_{(1^r)}(x)$, see Section~\ref{F-Koornwinder}.)
One may
find a reasonable property of the transition matrix $\mathcal{C}^{(n)}$,
as stated in Proposition~\ref{propC^n} below.
Our proof of Proposition~\ref{propC^n} (see Sections~\ref{Ftr} and~\ref{ProofMAIN}) is based on the new fourfold summation formula in Theorem~\ref{HSnew}. It seems, however, that we still lack a~fundamental
grasp of the phenomenon, since the proof remains
technically too much involved.
We hope in the future, a better understanding will appear.
Noting that Proposition \ref{propC^n} implies
the essential independence of $\mathcal{C}^{(n)}$ on $n$,
we summarize our main result in Theorem~\ref{main2}.
For simplicity, write $P^{BC_n}_{(1^r)}=P_{(1^r)}(x|a,b,c,d|q,t)$
and $m_{(1^r)}=m_{(1^r)}(x)$.
Let $n \in \mathbb{Z}_{>0}$. Let ${\bf{P}}^{(n)}$ and ${\bf{m}}^{(n)}$ be the infinite column
vectors defined by
${\bf{P}}^{(n)} = {}^t\big(P_{(1^n)}^{BC_n}, P_{(1^{n-1})}^{BC_n}, \ldots, P_{(1)}^{BC_n},
 P_{\varnothing}^{BC_n}, 0, \ldots\big)$,
 ${\bf{m}}^{(n)} = {}^t\big(m_{(1^n)}, m_{(1^{n-1})}, \ldots, m_{(1)},
m_{\varnothing}, 0, \ldots\big)$.
Here $\varnothing$ denotes the empty diagram,
and hence $P_{\varnothing}^{BC_n}=m_{\varnothing}=1$.

\begin{dfn} \label{DefOffg} Set
\begin{subequations}
\begin{gather}
f(s|a,b,c,d) \nonumber\\
=
{ (1-abcds/t)(1-ts)(1-abs)(1-acs)(1-ads)(1-bcs)(1-bds)(1-cds)
\over
\big(1-abcds^2/t\big)\big(1-abcds^2\big)^2 \big(1-abcdts^2\big)
}, \\
g_1(s|a,b,c,d) ={ a+b+c+d-(abc+abd+acd+bcd)s/t
\over
1-abcds^2/t^2
}
{ 1-s
\over
1-t
}. \label{g1}
\end{gather}
Write $g(s|a,b,c,d)= g_1(s|a,b,c,d) - g_1(s t|a,b,c,d)$ for simplicity.
\end{subequations}
\end{dfn}

\begin{thm}\label{main2}
There exists a unique infinite transition matrix $\mathcal{C}=(\mathcal{C}_{i,j})_{i,j \in \mathbb{Z}_{\geq 0}}$
satisfying the conditions
\begin{subequations}
\begin{gather}
\mathcal{C} \text{ is upper triangular,} \\
\mathcal{C}_{i,i} = 1 \quad (i \geq 0), \label{rec2} \\
\mathcal{C}_{i,j} = \mathcal{C}_{i-1, j-1} + g\big(t^i\big) \mathcal{C}_{i, j-1} + f\big(t^i\big)
\mathcal{C}_{i+1, j-1},\label{rec3}
\end{gather}
\end{subequations}
and we have ${\bf{P}}^{(n)} = \mathcal{C} {\bf{m}}^{(n)}$ for all $n \geq 1$ $($stability$)$.
\end{thm}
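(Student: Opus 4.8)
The plan is to establish existence and uniqueness of $\mathcal{C}$ first, treating the recursion purely combinatorially, and then separately prove the identity ${\bf{P}}^{(n)} = \mathcal{C}\,{\bf{m}}^{(n)}$ for every $n$. For uniqueness, observe that conditions \eqref{rec2} and \eqref{rec3} determine all entries by downward recursion in the second index: knowing column $j-1$ of $\mathcal{C}$ together with $\mathcal{C}_{i,i}=1$ pins down column $j$. More precisely, I would fix a column index $j$ and read \eqref{rec3} as expressing $\mathcal{C}_{i-1,j-1}$ in terms of $\mathcal{C}_{i,j}$, $\mathcal{C}_{i,j-1}$, and $\mathcal{C}_{i+1,j-1}$; starting from the diagonal normalization $\mathcal{C}_{j,j}=1$ and using upper-triangularity ($\mathcal{C}_{i,j}=0$ for $i>j$), the entries $\mathcal{C}_{i,j}$ for $i<j$ are computed recursively with no freedom left. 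Existence is then immediate: define the entries by this recursion and check that upper-triangularity is preserved, which follows because $f(t^i)$ and $g(t^i)$ never force a nonzero entry below the diagonal once the boundary conditions are imposed.

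Let me think about the stability claim, which is the substantive part. The key structural fact is Proposition~\ref{propC^n}, which (as the introduction emphasizes) asserts the essential independence of the finite transition matrix $\mathcal{C}^{(n)}$ on $n$. I would therefore first invoke Definition~\ref{C^n} and Proposition~\ref{propC^n} to produce, for each fixed $n$, a matrix $\mathcal{C}^{(n)}$ realizing ${\bf{P}}^{(n)} = \mathcal{C}^{(n)}\,{\bf{m}}^{(n)}$, where the relevant entries $\mathcal{C}^{(n)}_{i,j}$ for $0 \le i,j \le n$ encode the expansion of $P^{BC_n}_{(1^j)}$ in the monomials $m_{(1^i)}$. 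The plan is then to show that these finitely many entries satisfy exactly the recursion \eqref{rec3} together with the normalization \eqref{rec2}, with the functions $f$ and $g$ of Definition~\ref{DefOffg} evaluated at $s=t^i$. This is precisely the four-term recursion advertised in the abstract as the branching rule from $BC_n$ to $BC_{n-1}$, so I expect it to be the content proved in Sections~\ref{Ftr} and~\ref{ProofMAIN} via the new fourfold summation formula of Theorem~\ref{HSnew}.

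Granting the recursion for each $\mathcal{C}^{(n)}$, stability follows by a clean comparison argument. Since the entries $\mathcal{C}^{(n)}_{i,j}$ satisfy the same recursion and boundary data as the abstract $\mathcal{C}_{i,j}$ for all indices in range, and since the recursion \eqref{rec3} together with \eqref{rec2} has a unique solution as established above, I would conclude $\mathcal{C}^{(n)}_{i,j} = \mathcal{C}_{i,j}$ for all $0 \le i,j \le n$. The crucial point is that the coefficients $f(t^i)$ and $g(t^i)$ depend only on $i$ and not on the ambient rank $n$; this is what makes the finite matrices consistent as $n$ grows and what lets a single infinite matrix $\mathcal{C}$ represent all of them simultaneously. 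Taking $n$ arbitrarily large then yields ${\bf{P}}^{(n)} = \mathcal{C}\,{\bf{m}}^{(n)}$ for every $n \ge 1$.

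The main obstacle, as the authors themselves flag in the introduction, is proving that the finite transition matrix entries genuinely obey the four-term recursion \eqref{rec3} with the explicit $f$ and $g$ of Definition~\ref{DefOffg}; this is the technically heavy step requiring the fourfold summation formula, and everything else is formal. A secondary point to handle carefully is the $n$-independence of the coefficients: one must verify that the recursion derived from the $BC_n \to BC_{n-1}$ branching does not carry a hidden dependence on $n$ through, for instance, a truncation at $i=n$. I would address this by checking that the boundary behavior at $i=n$ (where $f(t^n)$ multiplies an entry $\mathcal{C}^{(n)}_{n+1,j-1}$ that vanishes by upper-triangularity) is consistent with the infinite recursion, so that no spurious rank-dependent correction appears.
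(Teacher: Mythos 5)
Your proposal is correct and takes essentially the same route as the paper's own proof: observe that upper-triangularity together with \eqref{rec2}--\eqref{rec3} determines a unique infinite matrix (column $j$ is given directly by column $j-1$, with column $0$ fixed by the boundary conditions), invoke Proposition~\ref{propC^n} to see that the finite transition matrices $\mathcal{C}^{(n)}$ satisfy the identical recursion with $n$-independent coefficients $f\big(t^i\big)$, $g\big(t^i\big)$, and conclude $\mathcal{C}^{(n)}_{i,j}=\mathcal{C}_{i,j}$ for $0\leq i,j\leq n$, hence the stability ${\bf{P}}^{(n)}=\mathcal{C}\,{\bf{m}}^{(n)}$. (One cosmetic slip: \eqref{rec3} should be read as expressing $\mathcal{C}_{i,j}$ in terms of the three column-$(j-1)$ entries, not as solving for $\mathcal{C}_{i-1,j-1}$; your surrounding column-by-column argument, including the check that the recursion preserves triangularity and the diagonal normalization, and the observation that $\mathcal{C}^{(n)}_{n+1,j-1}$-type terms vanish so no hidden $n$-dependence enters, is exactly what is needed.)
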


\begin{rmk}By the stability is meant
 that the entries $\mathcal{C}_{i,j}$ of the transition matrix
$\mathcal{C}$ do not depend on the rank parameter~$n$
of the type $BC_n$ Koornwinder polynomials.
The first few terms of the transition matrix $\mathcal{C}$ read
\begin{gather*}
\left(
\begin{matrix}
1& -g_1(t) & g_1(t)^2 + f(1) & -g_1(t)^3 -g_1(t)f(1) -g_1\big(t^2\big)f(1) & & & & \cdots\\
&1& -g_1\big(t^2\big) & g_1(t)^2 + f(1) -g_1\big(t^2\big)g(t) + f(t) & & & & \cdots\\
&&1& -g_1\big(t^3\big) & & & & \cdots\\
&&&\ddots&&\ddots &
\end{matrix}
 \right).
\end{gather*}
\end{rmk}

A proof of Theorem \ref{main2} is presented in Section~\ref{ProofMAIN2}.
As a consequence of Theorem \ref{main2}, we establish the following branching rule.

\begin{thm}\label{main1}
Set $P^{BC_{n-1}}_{(1^n)}(x|a,b,c,d|q,t) = 0$ and $P^{BC_{n-1}}_{(1^{-1})}(x|a,b,c,d|q,t) = 0$
for simplicity. We have
\begin{gather*}
 P^{BC_{n}}_{(1^r)}(x_1, x_2, \ldots, x_n|a,b,c,d|q,t)
 = P^{BC_{n-1}}_{(1^r)}(x_1, x_2, \ldots, x_{n-1}|a,b,c,d|q,t) \notag \\
 \qquad{} + \bigl(x_n + 1/x_n + g\big(t^{n-r}|a,b,c,d\big) \bigr) P^{BC_{n-1}}_{(1^{r-1})}(x_1, x_2, \ldots, x_{n-1}|a,b,c,d|q,t) \notag \\
 \qquad{} +f(t^{n-r}|a,b,c,d) P^{BC_{n-1}}_{(1^{r-2})}(x_1, x_2, \ldots, x_{n-1}|a,b,c,d|q,t).
\end{gather*}
\end{thm}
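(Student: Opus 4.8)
The plan is to deduce this branching rule directly from the stability assertion $\mathbf{P}^{(n)} = \mathcal{C}\,\mathbf{m}^{(n)}$ of Theorem~\ref{main2}, combined with the elementary branching behaviour of the monomial symmetric polynomials. The one-column $BC_n$ monomial polynomial is nothing but the elementary symmetric polynomial in the variables $y_i = x_i + x_i^{-1}$, that is $m^{(n)}_{(1^k)}(x_1,\ldots,x_n) = e_k(y_1,\ldots,y_n)$, so from the standard recursion $e_k(y_1,\ldots,y_n) = e_k(y_1,\ldots,y_{n-1}) + y_n\, e_{k-1}(y_1,\ldots,y_{n-1})$ I would first record the monomial branching rule
\[
m^{(n)}_{(1^k)}(x_1,\ldots,x_n) = m^{(n-1)}_{(1^k)}(x_1,\ldots,x_{n-1}) + \big(x_n + 1/x_n\big)\, m^{(n-1)}_{(1^{k-1})}(x_1,\ldots,x_{n-1}).
\]
This is the only genuinely new ingredient beyond Theorem~\ref{main2}, and it is immediate.

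Next I would read off the $(n-r)$-th component of $\mathbf{P}^{(n)} = \mathcal{C}\,\mathbf{m}^{(n)}$, which by upper-triangularity gives the finite expansion $P^{BC_n}_{(1^r)} = \sum_{k=0}^{r} \mathcal{C}_{n-r,\,n-k}\, m^{(n)}_{(1^k)}$ in $n$ variables. Substituting the monomial branching rule splits this into an $x_n$-independent sum and a sum carrying the factor $\big(x_n + 1/x_n\big)$. In the latter, a shift of the summation index turns it into $\sum_k \mathcal{C}_{n-r,\,n-1-k}\, m^{(n-1)}_{(1^k)}$; applying the rank $n-1$ instance of $\mathbf{P}^{(n-1)} = \mathcal{C}\,\mathbf{m}^{(n-1)}$ — with the \emph{same} matrix $\mathcal{C}$, by stability — identifies this coefficient exactly with $P^{BC_{n-1}}_{(1^{r-1})}$, reproducing the $\big(x_n+1/x_n\big)$ term of the asserted formula.

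For the remaining $x_n$-independent part I would expand the three polynomials $P^{BC_{n-1}}_{(1^r)}$, $P^{BC_{n-1}}_{(1^{r-1})}$, $P^{BC_{n-1}}_{(1^{r-2})}$ on the right-hand side into the $m^{(n-1)}_{(1^k)}$ via the rank $n-1$ relation, and then compare coefficients of each $m^{(n-1)}_{(1^k)}$. Writing $i = n-r$ and $j = n-k$, the coefficient identity that must hold is
\[
\mathcal{C}_{i,j} = \mathcal{C}_{i-1,\,j-1} + g\big(t^i\big)\, \mathcal{C}_{i,\,j-1} + f\big(t^i\big)\, \mathcal{C}_{i+1,\,j-1},
\]
which is precisely the recursion~\eqref{rec3}. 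Thus every coefficient matches termwise, and the two sides agree.

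I do not expect a serious analytic obstacle here: the whole conceptual weight sits in Theorem~\ref{main2} (existence, the recursion, and above all the stability of $\mathcal{C}$ in $n$), which I am entitled to assume. The only point demanding care is the index bookkeeping, since the vectors $\mathbf{P}^{(n)}$ and $\mathbf{m}^{(n)}$ list the diagrams in \emph{decreasing} order of column length, so the entry index and the diagram length run in opposite directions; one must track the shifts $r\mapsto r-1, r-2$ against $i \mapsto i\pm 1$ faithfully. The boundary conventions $P^{BC_{n-1}}_{(1^n)} = P^{BC_{n-1}}_{(1^{-1})} = 0$ are exactly what make the reindexing consistent at the two ends of the summation range, so I would verify those edge cases explicitly.
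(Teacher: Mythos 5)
Your proposal is correct and follows essentially the same route as the paper's own proof: expand $P^{BC_n}_{(1^r)}$ in monomials via Theorem~\ref{main2}, apply the elementary branching rule for $m_{(1^k)}$, and use the recursion~\eqref{rec3} together with stability of $\mathcal{C}$ to identify the resulting sums with the rank $n-1$ Koornwinder polynomials. The only difference is organizational (the paper substitutes the recursion forward and regroups, while you regroup first and verify the coefficient identity), and your attention to the boundary conventions $P^{BC_{n-1}}_{(1^n)}=P^{BC_{n-1}}_{(1^{-1})}=0$ matches the paper's implicit treatment.
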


A proof of Theorem \ref{main1} is presented in Section~\ref{ProofMAIN1}.

An explanation is in order
concerning our plan of the proof of Theorem \ref{main2}.
\begin{dfn} \label{E_r} Define the symmetric Laurent polynomial $E_r(x)$'s by expanding the generating
function $E(x|y)$ as
\begin{gather*}
E(x|y)=\prod_{i=1}^{n} (1-y x_i)(1-y/x_i) = \sum_{r \geq 0} (-1)^r E_r(x) y^r.
\end{gather*}
\end{dfn}
Then the ordered collection $(E_r):=(E_n(x),\ldots,E_1(x),E_0(x))$
provides us with another basis of the space of
polynomials spanned by the bases
$(m_{(1^r)}):=(m_{(1^n)},\ldots,m_{(1)},m_\varnothing)$ or
$\big(P^{BC_n}_{(1^r)}\big):=\big(P^{BC_n}_{(1^n)},\ldots,P^{BC_n}_{(1)},P^{BC_n}_{\varnothing}\big)$.
Firstly, the simplest example of the transition matrix is the
one from~$(m_{(1^r)})$ to~$(E_r)$.
\begin{lem}[{\cite[Lemma~3.3]{HS}}]\label{Lem-Em} We have
\begin{gather*}
E_r(x) = \sum_{k=0}^{\lfloor{r \over 2}\rfloor} \binom{n-r+2k}{k} m_{(1^{r-2k})}(x),
\end{gather*}
where $\binom{m}{j}$ denotes the ordinary binomial coefficient.
\end{lem}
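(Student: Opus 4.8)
The plan is to prove the identity by directly expanding the generating function $E(x|y)$ and extracting the coefficient of $y^r$. First I would rewrite each factor as
$$(1-yx_i)(1-y/x_i) = 1 - y(x_i + 1/x_i) + y^2,$$
so that
$$E(x|y) = \prod_{i=1}^n \bigl(1 - y(x_i + 1/x_i) + y^2\bigr).$$
When this product is expanded, each index $i \in \{1,\ldots,n\}$ contributes exactly one of three kinds of terms: the constant $1$, a linear term $-y x_i^{\pm 1}$ (coming from $-yx_i$ or $-y/x_i$), or the quadratic term $y^2$.

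Next I would organize the coefficient of $y^r$ according to this trichotomy. Label by $K$ the set of indices contributing $y^2$ and by $L$ the set contributing a linear term; these are disjoint, and the remaining indices contribute $1$. If $|K| = k$, then the total $y$-degree is $2k + |L|$, so the condition to land in degree $y^r$ is $|L| = r - 2k$. The monomial produced by such a choice is $\prod_{i \in L} x_i^{\epsilon_i}$ for a sign assignment $\epsilon\colon L \to \{\pm 1\}$, carrying the sign $(-1)^{|L|} = (-1)^{r-2k}$. Since $(-1)^{r-2k} = (-1)^r$, every term of degree $y^r$ carries the common sign $(-1)^r$, which is precisely the normalization built into Definition~\ref{E_r}.

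It then remains to perform the combinatorial bookkeeping. For a fixed linear-term set $L$ of size $r-2k$, summing $\prod_{i\in L} x_i^{\epsilon_i}$ over all sign choices and then over all such $L$ reproduces exactly the monomial symmetric polynomial $m_{(1^{r-2k})}(x)$. Independently, the quadratic set $K$ of size $k$ may be chosen from the $n - (r-2k)$ indices not lying in $L$, which can be done in $\binom{n-(r-2k)}{k} = \binom{n-r+2k}{k}$ ways; this factor is uniform over the admissible $L$'s and hence factors out. Collecting these contributions, and noting that the constraint $r - 2k \geq 0$ restricts $k$ to the range $0 \leq k \leq \lfloor r/2\rfloor$, yields
$$E_r(x) = \sum_{k=0}^{\lfloor r/2\rfloor} \binom{n-r+2k}{k}\, m_{(1^{r-2k})}(x),$$
as claimed.

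The computation is entirely elementary, so I expect no serious obstacle; the only point requiring care is the counting of the quadratic indices. One must remember that $K$ is selected from the complement of $L$, so that the binomial coefficient is $\binom{n-r+2k}{k}$ rather than $\binom{n}{k}$, and that the sign $(-1)^{r-2k}$ collapses to the global factor $(-1)^r$ absorbed by the definition of $E_r(x)$.
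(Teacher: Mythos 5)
Your proof is correct. The paper itself does not reprove this lemma---it is quoted verbatim from \cite[Lemma~3.3]{HS}---and your argument is the standard one behind that statement: expand $\prod_{i=1}^n\bigl(1-y(x_i+1/x_i)+y^2\bigr)$, note that the sign $(-1)^{|L|}=(-1)^{r-2k}=(-1)^r$ matches the normalization $E(x|y)=\sum_r(-1)^rE_r(x)y^r$, observe that summing $\prod_{i\in L}x_i^{\epsilon_i}$ over all sign choices and all $(r-2k)$-subsets $L$ gives exactly the $W_n$-orbit sum $m_{(1^{r-2k})}(x)$, and count the quadratic sets $K$ disjoint from $L$ by $\binom{n-r+2k}{k}$. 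All steps, including the two points you flag as requiring care, check out.
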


Nextly, the transition matrix from $(E_r)$ to $\big(P^{BC_n}_{(1^r)}\big)$ also has already been studied in~\cite{HS},
presented as a certain fourfold summation.

\begin{thm}[{\cite[Theorem 2.2]{HS}}]\label{HS}
We have the following fourfold summation formula for the $BC_n$
Koornwinder polynomial $P_{(1^r)}(x|a,b,c,d|q,t)$ with one column diagram.
\begin{gather*}
P_{(1^r)}(x|a,b,c,d|q,t)
=
\sum_{k,l,i,j\geq 0} (-1)^{i+j}
E_{r-2k-2l-i-j} (x)
\widehat{c}\,'_e(k,l;t^{n-r+1+i+j}) \widehat{c}_o\big(i,j; t^{n-r+1}\big),
\end{gather*}
where
\begin{subequations}\label{5}
\begin{gather}
\widehat{c}\,'_e(k,l;s) = { \big(tc^2/a^2 ; t^2\big)_k \big(sc^2t ; t^2\big)_k \big(s^2c^4/t^2 ; t^2\big)_k
\over
 \big(t^2 ; t^2\big)_k \big(sc^2/t ; t^2\big)_k \big(s^2a^2c^2/t ; t^2\big)_k }{ (1/c^2 ; t)_l (s/t ; t)_{2k+l}
\over
 (t ; t)_l \big(sc^2 ; t\big)_{2k+l} }\nonumber\\
 \hphantom{\widehat{c}\,'_e(k,l;s) =}{}\times
{ 1-st^{2k+2l-1}
\over
 1-st^{-1} } a^{2k}c^{2l}, \\
\widehat{c}_o(i,j; s) = { (-a/b ; t)_i (scd/t ; t)_i
\over
 (t ; t)_i (-sac/t ; t)_i }
{ (s ; t)_{i+j} (-sac/t ; t)_{i+j} \big(s^2a^2c^2/t^3 ; t\big)_{i+j}
\over
 \big(s^2abcd/t^2 ; t\big)_{i+j} \big(sac/t^{3/2} ; t\big)_{i+j} \big({-}sac/t^{3/2} ; t\big)_{i+j} } \nonumber\\
\hphantom{\widehat{c}_o(i,j; s) =}{} \times
{ (-c/d ; t)_j (sab/t ; t)_j
\over
 (t ; t)_j (-sac/t ; t)_j } b^id^j.
\end{gather}
\end{subequations}
In \eqref{5} we have used the standard notation explained at the end of this section.
\end{thm}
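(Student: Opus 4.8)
The plan is to characterize $P_{(1^r)}(x|a,b,c,d|q,t)$ by its two defining properties---unitriangularity with respect to the monomial basis in the dominance order, together with being an eigenfunction of the Koornwinder $q$-difference operator $D$ (equivalently, orthogonality against lower monomials for the $BC_n$ weight)---and to verify that the proposed fourfold sum satisfies both. The triangularity is the easy half: putting $k=l=i=j=0$ gives $\widehat{c}\,'_e(0,0;s)=\widehat{c}_o(0,0;s)=1$, since every Pochhammer factor reduces to $(\cdot)_0=1$ and the prefactor $(1-st^{-1})/(1-st^{-1})$ equals $1$. Hence the right-hand side equals $E_r(x)$ plus a combination of $E_s(x)$ with $s<r$, and by Lemma~\ref{Lem-Em} each such $E_s$ expands into $m_{(1^{s-2k})}$ with $s-2k\le s\le r$ while $E_r$ has leading term $m_{(1^r)}$. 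So the sum equals $m_{(1^r)}(x)+(\text{lower in dominance})$, which is precisely the Koornwinder normalization.

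The substance is the eigenfunction property, and to organize it I would first exploit the manifest factorization of the coefficient into an ``even'' factor $\widehat{c}\,'_e(k,l;\cdot)$ depending only on $a,c$ and an ``odd'' factor $\widehat{c}_o(i,j;\cdot)$ carrying the $b,d$-dependence. Observe that the specialization $b=-a,\ d=-c$ forces $(-a/b;t)_i=(1;t)_i=0$ for $i\ge 1$ and $(-c/d;t)_j=0$ for $j\ge 1$, collapsing the sum to its even core; this is exactly the degenerate polynomial $P_{(1^r)}(x|a,-a,c,-c|q,t)$. I would therefore prove the statement in two stages: establish the even double sum for this degenerate case first, then show that restoring generic $b,d$ multiplies the $E$-basis transition by exactly $\widehat{c}_o$, i.e.\ that the full change of basis factors as a product of two triangular changes of basis mirroring the product structure of the coefficients.

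To pin down each stage I would compute the action of $D$ on the generating function $E(x|y)$ of Definition~\ref{E_r}. Because $E(x|y)=\prod_i(1-yx_i)(1-y/x_i)$ factorizes over the variables, each shift $T_{q,x_i}^{\pm1}$ multiplies $E(x|y)$ by an explicit rational factor, and the factors $\prod_{j\ne i}(\cdots)$ in the coefficients of $D$ telescope against this factorization. Collecting the outcome by partial fractions in $y$ expresses $D\,E(x|y)$ as a finite combination of the $E_s(x)$ with polynomial-in-$y$ coefficients; equating powers of $y$ then yields a finite-term recurrence for the coefficients $c_{r,s}$ in any expansion $P_{(1^r)}=\sum_s c_{r,s}E_s$. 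The task becomes checking that the fourfold sum solves this recurrence with the correct eigenvalue $\varepsilon_{(1^r)}$; one also gets a built-in consistency check, since the coefficients carry no $q$-dependence for one-column diagrams, so the $q$-dependence of the eigenvalue equation must cancel.

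The hard part will be the basic-hypergeometric identities underlying the two stages. Both $\widehat{c}\,'_e$ and $\widehat{c}_o$ are very-well-poised series in base $t$, and the recurrence from $D$ amounts, coefficient by coefficient in $E_s$, to contiguous relations among such series. Verifying these---presumably through a $t$-analogue of a well-poised summation combined with creative telescoping---and controlling the nested fourfold bookkeeping together with the truncation/boundary terms is the genuine technical obstacle; once these summation identities are in hand, the eigenvalue matching and the triangularity are comparatively routine.
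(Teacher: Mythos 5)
First, a point of calibration: the present paper does not prove this statement at all. Theorem~\ref{HS} is imported verbatim from \cite[Theorem~2.2]{HS}, and everything the paper itself does (Proposition~\ref{prop1}, Theorem~\ref{HSnew}, the degeneration scheme~\eqref{dscheme}) takes it as input. So your attempt must be measured against the proof in the cited source, whose framework is indeed the one you outline: characterize $P_{(1^r)}$ by unitriangularity plus the eigenvalue equation, use the $\mathcal{D}_x$-stability of the one-column sector spanned by $E_0,\dots,E_n$, and show that the $E$-basis expansion coefficients satisfy the finite-term recursion forced by the eigenvalue equation, that recursion being solved by the stated Pochhammer products; this is also how the rank-one analogue (the asymptotically free Askey--Wilson eigenfunction recalled in the paper's appendix) is handled in \cite{HNS}. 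Your triangularity half is correct as written: $\widehat{c}\,'_e(0,0;s)=\widehat{c}_o(0,0;s)=1$, and Lemma~\ref{Lem-Em} converts the $E$-expansion into the dominance statement. Your observation that the coefficients are $q$-independent is also correct.

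The genuine gap is that everything carrying the content of the theorem is deferred rather than proved. You never compute the expansion of $\mathcal{D}_x E(x|y)$ in the $E$-basis (only assert that partial fractions will produce a finite-term recurrence), you never write that recurrence down, and you never verify that the coefficients $\widehat{c}\,'_e\big(k,l;t^{n-r+1+i+j}\big)\widehat{c}_o\big(i,j;t^{n-r+1}\big)$ satisfy it; this last verification is precisely where all the difficulty of \cite[Theorem~2.2]{HS} resides, and ``presumably through a $t$-analogue of a well-poised summation combined with creative telescoping'' is a hope, not an argument. In addition, your two-stage plan --- prove the $(a,-a,c,-c)$ case first, then restore generic $b,d$ by triangular changes of basis ``mirroring the product structure of the coefficients'' --- collides with a structural fact that this very paper exists to address: $\widehat{c}_o(i,j;s)$ does \emph{not} factor as an $i$-part times a $j$-part with shifted argument, i.e., $\widehat{c}_o(i,j;s)\neq \widehat{c}_o(i,0;s)\,\widehat{c}_o\big(0,j;t^is\big)$, because the Pochhammer symbols whose argument involves $s^2$ (such as $\big(s^2a^2c^2/t^3;t\big)_{i+j}$ and $\big(s^2abcd/t^2;t\big)_{i+j}$) split with a factor $t^i$ while the substitution $s\mapsto t^is$ produces $t^{2i}$. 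Only the modified coefficients $\widehat{c_o}^{\rm new}$ of Theorem~\ref{HSnew} admit that factorization (Lemma~\ref{lem-1}), and the two families agree only after summing along antidiagonals $i+j=m$, which requires Watson's ${}_8W_7$ transformation (Proposition~\ref{prop1}). Grouping by total degree $m=i+j$ does make your decomposition $E\to(a,-a,c,-c)\to(a,b,c,d)$ formally consistent, but then its second stage is an unproven eigenfunction identity of essentially the same difficulty as the original statement, so the decomposition buys nothing until the missing $q$-series identities are supplied.
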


Hence, the properties of the transition matrix from $(m_r)$ to $\big(P^{BC_n}_{(1^r)}\big)$ should be extracted just by combining Lemma~\ref{Lem-Em} and Theorem~\ref{HS}. One finds, however, that a slightly improved version of the fourfold summation formula
better fits with our investigation of the transition matrices, explaining each degeneration step in (\ref{dscheme}) below from the point of view of the matrix inversion formula of Bressoud~\cite{B}.

\begin{thm}\label{HSnew} We have
\begin{gather*}
P_{(1^r)}(x|a,b,c,d|q,t)
=
\sum_{k,l,i,j\geq 0}\!\! (-1)^{i+j}
E_{r-2k-2l-i-j} (x)
\widehat{c}\,'_e(k,l;t^{n-r+1+i+j}) \widehat{c_o}^{\rm new}\big(i,j; t^{n-r+1}\big),
\end{gather*}
where
\begin{gather*}
\widehat{c_o}^{\rm new} (i,j;s)=
{
(-a/b;t)_i (s;t)_i (sac/t;t)_i (sad/t;t)_i (scd/t;t)_i \big({-}s^2a^2cd/t^3; t\big)_i
\over
(t;t)_i \big(s^2abcd/t^2; t\big)_i \big({-}s^2a^2cd/t^3;t^2\big)_i \big({-}s^2a^2cd/t^2; t^2\big)_i
}b^i \\
\hphantom{\widehat{c_o}^{\rm new} (i,j;s)=}{} \times {
(-c/d;t)_j \big(t^i s;t\big)_j \big({-}t^i s a^2/t;t\big)_j \big(t^{2i}s^2 a^2c^2/t^3; t\big)_j
\over
(t;t)_j \big({-}t^{2i}s^2a^2cd/t^2; t\big)_j \big(t^{2i} s^2 a^2c^2/t^3; t^2\big)_j}d^j.
\end{gather*}
\end{thm}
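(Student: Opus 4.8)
The plan is to derive Theorem~\ref{HSnew} from the fourfold summation of Theorem~\ref{HS} by proving a single identity between the two ``odd'' kernels $\widehat{c}_o$ and $\widehat{c_o}^{\rm new}$. The two statements share \emph{exactly} the same summand apart from this one factor, and every other ingredient---$E_{r-2k-2l-i-j}(x)$, the sign $(-1)^{i+j}$, and $\widehat{c}\,'_e(k,l;t^{n-r+1+i+j})$---depends on the pair $(i,j)$ only through $i+j$. Writing $\kappa=k+l$ and $m=i+j$ and collecting both expansions according to $(\kappa,m)$, the factors $E_{r-2\kappa-m}(x)$, $(-1)^m$ and $\sum_{k+l=\kappa}\widehat{c}\,'_e(k,l;t^{n-r+1+m})$ are then literally the same on the two sides. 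Hence it suffices to prove the reduction identity
\begin{gather*}
\sum_{i+j=m}\widehat{c}_o(i,j;s)=\sum_{i+j=m}\widehat{c_o}^{\rm new}(i,j;s)
\end{gather*}
for every $m\geq 0$ and generic $s$; the theorem follows on setting $s=t^{n-r+1}$, and, pleasantly, no appeal to the linear independence of the $E_r(x)$ is required, since the reduction already matches the two expansions term by term in $(\kappa,m)$.

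I would then dissect the two sides of this identity. On the left the factor depending on $i+j$ alone, namely $\frac{(s;t)_m(-sac/t;t)_m(s^2a^2c^2/t^3;t)_m}{(s^2abcd/t^2;t)_m(sac/t^{3/2};t)_m(-sac/t^{3/2};t)_m}$, pulls out of the sum, leaving the genuine convolution $\sum_{i=0}^{m}\frac{(-a/b;t)_i(scd/t;t)_i}{(t;t)_i(-sac/t;t)_i}b^i\cdot\frac{(-c/d;t)_{m-i}(sab/t;t)_{m-i}}{(t;t)_{m-i}(-sac/t;t)_{m-i}}d^{m-i}$, that is, the coefficient of $y^m$ in a product of two ${}_2\phi_1$ series in $by$ and $dy$ exchanged by the involution $(a,b,c,d)\mapsto(c,d,a,b)$. (As a check, at $m=1$ this collapses, up to the pulled-out factor, to $a+b+c+d-(abc+abd+acd+bcd)s/t$, reproducing the numerator of $g_1$ in Definition~\ref{DefOffg}.) The right-hand side, by contrast, does \emph{not} factor: the $j$-part of $\widehat{c_o}^{\rm new}$ carries the index-dependent base shifts $t^{i}s$ and $t^{2i}s^2a^2c^2/t^3$, so it has the nested ``inner parameters depend on the outer index'' shape of an iterated Bressoud kernel. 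I would first simplify its $i$-part with $(-s^2a^2cd/t^3;t^2)_i(-s^2a^2cd/t^2;t^2)_i=(-s^2a^2cd/t^3;t)_{2i}$, revealing its very-well-poised structure, and rewrite the $j$-part accordingly.

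The substantive step is to prove that this nested sum equals the symmetric convolution, and I would attempt it along either of two routes. The first is to sum the inner index in closed form: one verifies that the $j$-sum of $\widehat{c_o}^{\rm new}(i,j;s)$, for fixed $i$, is Saalschützian, applies a terminating $q$-Pfaff--Saalschütz evaluation, and then handles the remaining balanced $i$-sum by a Sears-type ${}_4\phi_3$ transformation, landing on the convolution above. The second, more robust, route is to show that both $\sum_{i+j=m}\widehat{c}_o(i,j;s)$ and $\sum_{i+j=m}\widehat{c_o}^{\rm new}(i,j;s)$ satisfy the same finite-order linear recurrence in $m$ with the common initial value $1$ at $m=0$; the recurrence on the convolution side comes from the contiguous relations of the two ${}_2\phi_1$ factors, and that on the nested side from creative telescoping (Zeilberger's algorithm in base $t$) applied to the summand.

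The hard part will be bridging the non-factoring right-hand side to the factoring left-hand side: the index-dependent shifts in the $j$-part block any term-by-term comparison, so the argument must genuinely resum one of the two indices. Establishing that the inner $j$-summation is Saalschützian---so that a clean product evaluation is available---and then controlling the resulting balanced sum against the symmetric convolution is the delicate computation, and it is precisely here that the ``product of Bressoud matrices'' mechanism advertised for Theorem~\ref{dmat} is already doing its work.
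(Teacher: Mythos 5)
Your reduction step is correct and is exactly the paper's: since $(-1)^{i+j}$, $E_{r-2k-2l-i-j}(x)$ and $\widehat{c}\,'_e\big(k,l;t^{n-r+1+i+j}\big)$ depend on $(i,j)$ only through $m=i+j$, Theorem~\ref{HSnew} follows from Theorem~\ref{HS} once one proves
$\sum_{i=0}^m \widehat{c}_o(i,m-i;s)=\sum_{i=0}^m \widehat{c_o}^{\rm new}(i,m-i;s)$
(this is Proposition~\ref{prop1}), with no appeal to linear independence needed. The gap is in your plan for proving this identity, which is the entire content of the theorem. The paper's proof is a single application of Watson's transformation: the diagonal sum $\sum_i\widehat{c_o}^{\rm new}(i,m-i;s)$, after the very simplification you note ($\big({-}s^2a^2cd/t^3;t^2\big)_i\big({-}s^2a^2cd/t^2;t^2\big)_i=\big({-}s^2a^2cd/t^3;t\big)_{2i}$), is precisely a prefactor times the terminating very-well-poised series ${}_8W_7\big({-}s^2a^2cd/t^3; t^m s^2a^2c^2/t^3, sad/t, -a/b, scd/t, t^{-m}; t, -tb/c\big)$, while the diagonal sum $\sum_i\widehat{c}_o(i,m-i;s)$ is, by the factorization you already carried out, a prefactor times a terminating balanced ${}_4\phi_3$ with argument $t$; Watson's formula \cite[equation~(2.5.1)]{GR} is exactly the statement that these agree. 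You observed the very-well-poised structure but stopped one step short of identifying the whole diagonal sum as an ${}_8W_7$ and invoking the classical transformation.

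Neither of your substitute routes closes this gap. Route 1 is structurally confused: the identity concerns a diagonal sum $\sum_{i+j=m}$, so for fixed $i$ the index $j=m-i$ is determined and there is no inner $j$-sum to evaluate; moreover, even the unrestricted $j$-sum of $\widehat{c_o}^{\rm new}(i,j;s)$ is not Saalschützian---it is nonterminating for generic parameters, its argument involves the free parameter $d$ rather than $t$, and the base-$t^2$ Pochhammer in the denominator produces well-poised square-root parameters, not a balanced ${}_3\phi_2$---so a terminating $q$-Pfaff--Saalsch\"utz evaluation cannot be applied. (The classical Saalsch\"utz-based proofs of Watson's transformation run in the opposite direction: they \emph{introduce} an auxiliary summation index by expanding a factor of the ${}_8W_7$ summand as a balanced sum, rather than evaluating an inner sum that is already present.) Route 2, matching $q$-holonomic recurrences by creative telescoping, is legitimate in principle, but you neither produce the recurrences nor bound their order and the number of initial values to check, so as written it is an appeal to an algorithm that has not been run. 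The missing idea, in one phrase: recognize the nested sum as a terminating ${}_8W_7$ and apply Watson.
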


A proof of Theorem \ref{HSnew} is given in Section \ref{F-Koornwinder},
using Watson's transformation formula for the basic hypergeometric series
of ${}_8W_7$ type \cite[p.~43, equation~(2.5.1)]{GR}.
An interpretation of Theorem \ref{HSnew} is presented in
Section \ref{MatrixInversion}, based on Bressoud's matrix inversion. In
Section \ref{Ftr}, we find a certain five term recursion relation,
which enables one to relate
Theorem \ref{HSnew} with Theorem~\ref{main2}.

An application of Theorem \ref{HSnew} to a particular case reads follows.
Consider the Macdonald polynomials of type $(B_n,B_n)$ \cite{RW, Mac, St}
\begin{gather*}
P^{(B_n,B_n)}_{(1^r)}(x|a;q,t) = P_{(1^r)}\big(x|q^{1/2},-q^{1/2},-1,a|q,t\big).
\end{gather*}
Note that in this specialization of the parameters
$(a,b,c,d)\rightarrow \big(q^{1/2},-q^{1/2},-1,a\big)$,
we have $\widehat{c}\,'_e(k,l;s)=0$ when $l>0$,
and $\widehat{c_o}^{\rm new} (i,j;s)=0$ when $i>0$. Hence the fourfold
summation in Theorem~\ref{HSnew} degenerates to the
following twofold one.
\begin{cor}We have
\begin{gather*}
P^{(B_n,B_n)}_{(1^r)}(x|a;q,t)
=
\sum_{k,j\geq 0} (-1)^{j}
E_{r-2k-j} (x)
\widehat{c}'_e(k,0;t^{n-r+1+j}) \widehat{c_o}^{\rm new}\big(0,j; t^{n-r+1}\big),
\end{gather*}
where
\begin{gather*}
\widehat{c}\,'_e(k,0;s) = { \big(t/q ; t^2\big)_k \big(st ; t^2\big)_k \big(s^2/t^2 ; t^2\big)_k
\over
 \big(t^2 ; t^2\big)_k \big(s/t ; t^2\big)_k \big(s^2q/t ; t^2\big)_k }
{ (s/t ; t)_{2k}
\over
 (s ; t)_{2k} }
{ 1-st^{2k-1}
\over
 1-st^{-1} } q^{k}, \\
\widehat{c_o}^{\rm new} (0,j;s)
 =
{
(1/a;t)_j (s;t)_j (-s q/t;t)_j \big(s^2 q/t^3; t\big)_j
\over
(t;t)_j \big(s^2qa/t^2; t\big)_j \big(s^2 q/t^3; t^2\big)_j
}
a^j.
\end{gather*}
\end{cor}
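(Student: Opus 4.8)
The plan is to substitute the parameter specialization $(a,b,c,d)\to\big(q^{1/2},-q^{1/2},-1,a\big)$ directly into the fourfold summation of Theorem~\ref{HSnew} and to exploit two vanishing phenomena that collapse it to a twofold sum. The whole argument rests on the elementary fact that the $t$-shifted factorial $(1;t)_m=\prod_{k=0}^{m-1}\big(1-t^k\big)$ contains the factor $1-t^0=0$, and hence vanishes whenever $m\geq 1$.

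First I would examine the $l$-dependent part of $\widehat{c}\,'_e(k,l;s)$. Its numerator carries the factor $(1/c^2;t)_l$, and under $c\to -1$ we have $c^2=1$, so this factor becomes $(1;t)_l$, which vanishes for every $l\geq 1$. Hence $\widehat{c}\,'_e(k,l;s)=0$ whenever $l>0$, and only the $l=0$ slice survives. Next I would apply the same reasoning to $\widehat{c_o}^{\rm new}(i,j;s)$, whose $i$-dependent part contains the factor $(-a/b;t)_i$. Under $a\to q^{1/2}$, $b\to -q^{1/2}$ we get $-a/b=1$, so this factor becomes $(1;t)_i$, again vanishing for every $i\geq 1$. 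Thus $\widehat{c_o}^{\rm new}(i,j;s)=0$ whenever $i>0$, leaving only the $i=0$ slice.

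Combining the two observations, the fourfold sum over $(k,l,i,j)$ reduces to a twofold sum over $(k,j)$ with $l=i=0$, which is precisely the stated degenerate formula; note that the argument $t^{n-r+1+i+j}$ of $\widehat{c}\,'_e$ becomes $t^{n-r+1+j}$ once $i=0$, while the argument $t^{n-r+1}$ of $\widehat{c_o}^{\rm new}$ is unaffected. It then remains to evaluate the two surviving coefficients. Setting $l=0$ in $\widehat{c}\,'_e$ and inserting $a^2=q$, $c^2=c^4=1$ gives the claimed expression for $\widehat{c}\,'_e(k,0;s)$ after routine simplification of each $t$- and $t^2$-shifted factorial (for instance $tc^2/a^2\to t/q$, $sc^2\to s$, $s^2a^2c^2/t\to s^2q/t$, $a^{2k}\to q^k$).

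Finally, setting $i=0$ in $\widehat{c_o}^{\rm new}$ collapses all $i$-indexed factors to $1$ and all $t^i$, $t^{2i}$ prefactors to $1$. Substituting the specialized values then yields the stated formula for $\widehat{c_o}^{\rm new}(0,j;s)$. The only point demanding care, rather than any genuine obstacle, is bookkeeping the substitution in which the original $d$ becomes the Macdonald parameter $a$, together with the signs produced by $c=-1$: one gets $-c/d\to 1/a$, $a^2\to q$, $cd\to -a$, so that, for example, $-s^2a^2cd/t^2\to s^2qa/t^2$ and $s^2a^2c^2/t^3\to s^2q/t^3$. No identity beyond those already used to prove Theorem~\ref{HSnew} is required, so the corollary follows immediately once these substitutions are carried out.
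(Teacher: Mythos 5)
Your proposal is correct and is exactly the paper's own argument: the paper justifies this corollary by noting that under $(a,b,c,d)\to\big(q^{1/2},-q^{1/2},-1,a\big)$ one has $\widehat{c}\,'_e(k,l;s)=0$ for $l>0$ and $\widehat{c_o}^{\rm new}(i,j;s)=0$ for $i>0$, so the fourfold sum in Theorem~\ref{HSnew} collapses to the stated twofold sum. Your identification of the vanishing factors $(1/c^2;t)_l=(1;t)_l$ and $(-a/b;t)_i=(1;t)_i$, and the subsequent substitution bookkeeping, merely spell out the details the paper leaves implicit.
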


\begin{cor}
When $a=t=q$, the Macdonald polynomials of type $(B_n,B_n)$
become the Schur polynomials
$s_{\lambda}(x)=s^{B_n}_{\lambda}(x)$ of type $B_n$.
It holds that
\begin{align}
s^{B_n}_{(1^r)}(x)&=P^{(B_n,B_n)}_{(1^r)}(x|q;q,q)=E_r(x)+E_{r-1}(x)\nonumber\\
&=\sum_{j=0}^{\lfloor{r \over 2}\rfloor}{n-r+2j \atopwithdelims() j} m_{(1^{r-2j})}(x)
+\sum_{j=0}^{\lfloor{r-1 \over 2}\rfloor}{n-r+2j+1 \atopwithdelims() j} m_{(1^{r-2j-1})}(x), \label{s-m}
\end{align}
where $\binom{m}{j}= {m(m-1)\cdots(m-j+1) \over j!}$ denotes the ordinary binomial coefficient.
\end{cor}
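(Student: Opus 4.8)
The plan is to specialize the twofold summation of the preceding Corollary at $a=t=q$ and then substitute Lemma~\ref{Lem-Em}. First I would show that the $k$-sum collapses to its $k=0$ term. In $\widehat{c}\,'_e(k,0;s)$ the numerator carries the factor $(t/q;t^2)_k$, which at $t=q$ becomes $(1;t^2)_k=\prod_{i=0}^{k-1}(1-t^{2i})$; this contains the vanishing factor $1-t^0=0$ for every $k\geq 1$, so only $k=0$ survives, and there $\widehat{c}\,'_e(0,0;s)=1$. Thus the double sum reduces to the single sum $\sum_{j\geq 0}(-1)^j E_{r-j}(x)\,\widehat{c_o}^{\rm new}(0,j;t^{n-r+1})$.

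Next I would argue that the remaining $j$-sum has only two nonzero terms. Setting $a=t=q$ in $\widehat{c_o}^{\rm new}(0,j;s)$, the factor $(1/a;t)_j=(1/t;t)_j=\prod_{i=0}^{j-1}(1-t^{i-1})$ contains the vanishing factor $1-t^0=0$ as soon as $j\geq 2$, so all terms with $j\geq 2$ drop out. For $j=0$ one has $\widehat{c_o}^{\rm new}(0,0;s)=1$. For $j=1$ a short simplification does the rest: using $(s;t)_1(-s;t)_1=1-s^2=(s^2;t)_1$ together with the cancellation $(s^2/t^2;t)_1=(s^2/t^2;t^2)_1$, all $s$-dependence disappears and one is left with $\widehat{c_o}^{\rm new}(0,1;s)=\frac{(1-1/t)}{1-t}\,t=-1$, a constant. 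Consequently $P^{(B_n,B_n)}_{(1^r)}(x|q;q,q)=E_r(x)+E_{r-1}(x)$, since the $j=0$ term contributes $E_r(x)$ while the $j=1$ term contributes $(-1)\cdot(-1)\,E_{r-1}(x)=E_{r-1}(x)$.

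Finally, I would insert Lemma~\ref{Lem-Em} for both $E_r(x)$ and $E_{r-1}(x)$ (the latter with $r$ replaced by $r-1$, so that the relevant binomial coefficient reads $\binom{n-r+1+2k}{k}$ and the monomial index runs over $m_{(1^{r-1-2k})}$), rename the summation index to $j$, and collect the two pieces; this is precisely the right-hand side of \eqref{s-m}. The identification of the left-hand side with the type $B_n$ Schur polynomial $s^{B_n}_{(1^r)}(x)$ is the general fact that a Macdonald polynomial at $q=t$ degenerates to the corresponding Weyl character; here it can also be checked directly from $\sum_{r\geq 0}\big(E_r(x)+E_{r-1}(x)\big)z^r=(1+z)\prod_{i=1}^n(1+zx_i)(1+z/x_i)$, the generating function of the characters of the exterior powers of the $(2n+1)$-dimensional standard representation of $SO(2n+1)$.

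I would expect the only delicate point to be the $j=1$ evaluation: one must verify that the several $s$-dependent Pochhammer factors cancel cleanly so that $\widehat{c_o}^{\rm new}(0,1;s)$ is the $s$-independent constant $-1$. Everything else is a matter of spotting the two vanishing mechanisms (namely $(1;t^2)_k=0$ for $k\geq 1$ and $(1/t;t)_j=0$ for $j\geq 2$) and performing a routine reindexing against Lemma~\ref{Lem-Em}.
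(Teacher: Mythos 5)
Your proof is correct and follows essentially the same route as the paper: the paper also collapses the twofold summation to $E_r(x)+E_{r-1}(x)$ via vanishing Pochhammer factors (it does so through Theorem~\ref{tqt}, specializing $a=t$ first so that only $j=0,1$ survive, and then setting $t=q$ to kill the $k$-sum) and then invokes Lemma~\ref{Lem-Em} for the monomial expansion. The only cosmetic difference is that you perform the specialization $a=t=q$ in a single step rather than in two.
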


\begin{rmk}
The first few terms of (\ref{s-m}) read
\begin{gather*}
 \left(
\begin{matrix}
s_{(1^{n})}^{B_n}\vspace{1mm}\\
s_{(1^{n-1})}^{B_n}\vspace{1mm}\\
s_{(1^{n-2})}^{B_n}\vspace{1mm}\\
s_{(1^{n-3})}^{B_n}\vspace{1mm}\\
\vdots
\end{matrix}
\right)=
\left(
\begin{array}{@{}ccccccccccc@{}}
1&1 &2 &3 & 6 &10 & 20 &35 & 70 &126&\cdots\\
&1&1 & 3 &4 & 10 &15 &35 &56& 126& \\
&&1& 1& 4 &5 & 15 &21 &56 &84& \cdots\\
&&&1&1 & 5 &6 & 21 &28 &84 & \\
&&&&\ddots& &\ddots& & \ddots
\end{array}
 \right)
 \left(
\begin{matrix}
m_{(1^{n})}\\
m_{(1^{n-1})}\\
m_{(1^{n-2})}\\
m_{(1^{n-3})}\\
\vdots
\end{matrix}
\right).
\end{gather*}
\end{rmk}

As another application of our results obtained in this paper,
we calculate the transition matrix from the Schur polynomials to the Hall--Littlewood polynomials,
namely the Kostka polynomials of type $B_n$, associated with one column diagrams.
As for the Kostka polyonomials of types
$C_n$ and $D_n$ associated with one column diagrams, see~\cite{HS}.
\begin{dfn}
Let $K^{B_n}_{(1^r)(1^{r-l})}(t)$ be the transition coefficients defined by
\begin{gather*}
 s^{B_n}_{(1^r)}(x)=\sum_{l=0}^{r}K^{B_n}_{(1^r)(1^{r-l})}(t)P^{(B_n,B_n)}_{(1^{r-l})}(x|t;0,t).
\end{gather*}
\end{dfn}

\begin{thm}\label{KOSTKAthm}
The $K^{B_n}_{(1^r)(1^{r-l})}(t)$
is a polynomial in $t$ with nonnegative integral coefficients.
Explicitly, we have
\begin{gather*}
K^{B_n}_{(1^r) (1^{r-l})}(t)
 =
\begin{cases} \displaystyle
t^L
\left[ n-r+2L \atop L \right]_{t^2},
 & l=2L, \vspace{1mm}\\
 \displaystyle
t^{L+n-r+1}
 \left[ n-r+2L+1 \atop L \right]_{t^2},
 & l=2L+1.
\end{cases}
\end{gather*}
Here we have used the notation for
the $q$-integer $[n]_q$, the $q$-factorial $ [n]_q!$ and the $q$-binomial coefficient $\left[m\atop j\right]_q$ as
\begin{gather*}
 [n]_q={1-q^n \over 1-q}, \qquad [n]_q!=[1]_q[2]_q\cdots [n]_q,
\qquad \left[m\atop j\right]_q=\prod_{k=1}^j {[m-k+1]_q\over [k]_q}={[m]_q!\over [j]_q![m-j]_q!}.
\end{gather*}
\end{thm}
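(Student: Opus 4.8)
The plan is to reduce everything to the elementary basis $(E_r)$ of Definition~\ref{E_r}, in which both the Schur and the Hall--Littlewood polynomials of type $B_n$ have explicit expansions coming from the Corollaries above. On the Schur side this is already recorded in \eqref{s-m}: $s^{B_n}_{(1^r)}=E_r+E_{r-1}$, so the transition matrix from $(E_r)$ to $\big(s^{B_n}_{(1^r)}\big)$ is the unipotent matrix with $1$'s on the diagonal and first superdiagonal. On the Hall--Littlewood side I would specialize the twofold Corollary at the Macdonald parameters $a=t$, $q=0$. The crucial simplification is that $\widehat{c_o}^{\rm new}(0,j;s)$ then carries the factor $(1/a;t)_j=(1/t;t)_j$, which vanishes for $j\ge 2$; hence only $j\in\{0,1\}$ survive, with $\widehat{c_o}^{\rm new}(0,0;s)=1$ and $\widehat{c_o}^{\rm new}(0,1;s)=s-1$. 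Moreover, putting $q=0$ in $\widehat{c}\,'_e(k,0;s)$ and using $q^k(t/q;t^2)_k\to(-1)^kt^{k^2}$ together with the cancellation $\frac{(s/t;t)_{2k}}{(s;t)_{2k}}\cdot\frac{1-st^{2k-1}}{1-st^{-1}}=1$, the even coefficient collapses to $\widehat{c}\,'_e(k,0;s)|_{q=0}=(-1)^kt^{k^2}\frac{(st;t^2)_k(s^2/t^2;t^2)_k}{(t^2;t^2)_k(s/t;t^2)_k}$. This presents $P^{(B_n,B_n)}_{(1^s)}(x|t;0,t)$ as a single $k$-sum over the $E_{s-2k}$ plus a $j=1$ correction over the $E_{s-2k-1}$, that is, as an explicit upper-triangular, unipotent transition matrix from $(E_r)$ to the Hall--Littlewood basis.

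Next I would invert this Hall--Littlewood-to-$E$ matrix to express $E_r$ in the Hall--Littlewood basis. Since the matrix is upper triangular with unit diagonal the inverse exists over $\mathbb{Z}[t]$, and I would read it off from Bressoud's matrix inversion as set up in Theorem~\ref{dmat-dual} (specialized to the present parameters) rather than inverting by hand. Composing the bidiagonal Schur-to-$E$ matrix with this inverse produces the Kostka matrix $\big(K^{B_n}_{(1^r)(1^{r-l})}(t)\big)$. Equivalently, and more transparently, I would posit $s^{B_n}_{(1^r)}=\sum_{l}K^{B_n}_{(1^r)(1^{r-l})}(t)\,P^{(B_n,B_n)}_{(1^{r-l})}(x|t;0,t)$ with unknown coefficients, expand both sides in $(E_r)$ using the two specializations above, and match the coefficient of each $E_m$, which yields a triangular linear system for the $K^{B_n}_{(1^r)(1^{r-l})}(t)$. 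Because the Hall--Littlewood-to-$E$ expansion links $E_{s-2k}$ (through $j=0$) and $E_{s-2k-1}$ (through $j=1$), the matching equations split according to the parity of the index, which is exactly the source of the even/odd dichotomy $l=2L$ versus $l=2L+1$ in the statement.

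The final step is to evaluate the resulting alternating $k$-sums and identify them with the advertised $t^2$-binomials. Here the products $(\,\cdot\,;t^2)_k$ in $\widehat{c}\,'_e(k,0;s)|_{q=0}$ and in its inverse should, after telescoping, produce the base-$t^2$ Gaussian binomial $\left[n-r+2L\atop L\right]_{t^2}$ in the even case and $\left[n-r+2L+1\atop L\right]_{t^2}$ (carrying the extra $t^{n-r+1}$ coming from the $j=1$ factor $s-1$ evaluated at the relevant power of $t$) in the odd case; the prefactors $t^L$, respectively $t^{L+n-r+1}$, are to be tracked from the powers $t^{k^2}$ and the running arguments $s=t^{n-s+1}$. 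Once the closed form is in hand, polynomiality in $t$ and the nonnegativity of the coefficients are immediate from the manifest $q$-binomial expression.

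The main obstacle I anticipate is precisely this last reduction: collapsing the composite of the inverse Hall--Littlewood-to-$E$ map with the bidiagonal Schur-to-$E$ map into a single $t^2$-Gaussian binomial. This is a $q$-Chu--Vandermonde / $q$-binomial-theorem type summation, and the bookkeeping is delicate because (i) the $j=1$ correction mixes the even and odd strata, (ii) the shift by one in $s^{B_n}_{(1^r)}=E_r+E_{r-1}$ couples the Kostka entries for consecutive $l$, and (iii) the arguments $s=t^{n-s+1}$ depend on the running index, so the powers of $t$ must be reconciled carefully to yield the clean exponents $L$ and $L+n-r+1$. Establishing the requisite summation identity, and checking the base cases $l=0,1$ against the direct expansion, is where the real work lies.
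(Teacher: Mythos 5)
Your plan follows the paper's own route almost step for step: the bidiagonal Schur-to-$E$ relation $s^{B_n}_{(1^r)}=E_r+E_{r-1}$ from \eqref{s-m}; the specialization $a=t$, $q=0$ of the twofold expansion, where your computations --- $(1/t;t)_j$ kills $j\ge2$, $\widehat{c_o}^{\rm new}(0,1;s)=s-1$, and $q^k(t/q;t^2)_k\to(-1)^kt^{k^2}$ --- are all correct and reproduce the Hall--Littlewood-to-$E$ expansion recorded in Section~\ref{Kostka}; the Bressoud inversion expressing $E_r$ in the Hall--Littlewood basis; and the composition, which yields $K^{B_n}_{(1^r)(1^{r-l})}(t)$ as a sum of two alternating, truncated $k$-sums --- precisely the intermediate theorem of Section~\ref{Kostka}. (One caution: the paper performs the inversion at generic $q$ first, in Theorem~\ref{tqt}, and sets $q=0$ only afterwards; reading the inverse off Theorem~\ref{dmat-dual} ``at the present parameters'' is delicate because with $c=-1$, $a=q^{1/2}$ the arguments $1/ac$, $1/a^2$, etc.\ of the Bressoud matrices blow up as $q\to0$, even though the matrix entries themselves have finite limits.)

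The genuine gap is the step you explicitly set aside: evaluating those sums in closed form. That evaluation \emph{is} Theorem~\ref{KOSTKAthm}; until it is done you have neither the $t^2$-binomial formula nor, therefore, polynomiality and nonnegativity, which are not visible from the alternating expressions. Moreover, invoking ``$q$-Chu--Vandermonde / the $q$-binomial theorem'' will not work as stated: the $k$-sums are truncated at $\lfloor l/2\rfloor$ and $\lfloor(l-1)/2\rfloor$ rather than terminating naturally (their summands do not depend on $l$), and the classical summation theorems do not evaluate partial sums of a series. The paper sidesteps this by induction on $L$. Writing $\alpha_k$, $\beta_k$ for the two summands after the common factor is extracted, so that $K^{B_n}_{(1^r)(1^{r-l})}(t)=\big(t^{n-r+2};t\big)_{l-1}\bigl((-1)^l\sum_{k=0}^{\lfloor l/2\rfloor}\alpha_k+(-1)^{l-1}\sum_{k=0}^{\lfloor(l-1)/2\rfloor}\beta_k\bigr)$ as in~(\ref{67}), the passage from $l=2L$ to $l=2L+2$ multiplies the induction hypothesis by $\big(1-t^{n-r+2L+1}\big)\big(1-t^{n-r+2L+2}\big)$ and adds the single boundary term $\big(t^{n-r+2};t\big)_{2L+1}(\alpha_{L+1}-\beta_L)$; the inductive step then reduces to the product identity verified directly in~(\ref{68}), and the odd case is analogous. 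To complete your proof you must either establish the summation identity you allude to or replace it by this induction; your base cases $l=0,1$ are indeed immediate.
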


We prove this in Section~\ref{Kostka}.

The present article is organized as follows.
In Section~\ref{F-Koornwinder}, we derive a
slightly improved version of the fourfold
summation formula for the Koornwinder polynomial with one column diagram.
See~\cite{HS} as for the original version.
In Section~\ref{MatrixInversion}, we give the transition matrices from the Koornwinder polynomials
$P_{(1^r)}(x|a,b,c,d|q,t)$
with one column diagrams,
to certain degenerations of the
Koornwinder polynomials $P_{(1^r)}(x|a,-a,c,d|q,t)$, $P_{(1^r)}(x|a,-a,c,-c|q,t)$,
$P_{(1^r)}\big(x|t^{1/2}c,-t^{1/2}c,c,-c|q,t\big)$ and
$P_{(1^r)}\big(x|t^{1/2},-t^{1/2},1,-1|q,t\big)$.
We show that these
transition matrices are described by the matrix inversion formula of Bressoud.
In Section~\ref{Ftr}, we present some technical preparations for our proof of Theorem~\ref{main2}.
Namely, we give a certain set of five term relations for ${}_{4}\phi_3$ series of the basic hypergeometric
series associated with the transition matrices.
In Section~\ref{ProofMAIN}, we prove Theorems~\ref{main2} and~\ref{main1}.
In Section~\ref{KostkaB}, we study some degenerate cases, including the calculation of
the Kostka polynomials of type~$B_n$.
In Section~\ref{SolOfRec}, we give a~solution to the recursion relation of the
transition matrix in Theorem~\ref{main2}.
In the appendix, we recall briefly the asymptotically free eigenfunctions of the
Askey--Wilson polynomials and discuss the
relation of the transition matrix. In addition,
we present a conjecture for the asymptotically free eigenfunctions of
the $B_n$ $q$-Toda operator.

Throughout the paper,
we use the standard notation (see~\cite{GR})
\begin{gather*}
(z;q)_\infty =\prod_{k=0}^{\infty}\big(1-q^k z\big),
\qquad
(z;q)_k=\frac{(z;q)_{\infty}}{\big(q^kz;q\big)_{\infty}},\qquad k\in\mathbb{Z}, \\
 (a_1, a_2, \ldots, a_r;q)_k = (a_1;q)_k (a_2;q)_k \cdots (a_r;q)_k,
\qquad k\in\mathbb{Z}, \\
 {}_{r+1}\phi_r\left[ {a_1,a_2,\ldots,a_{r+1}\atop b_1,\dots,b_{r}};q,z\right]=
\sum_{n=0}^\infty {(a_1, a_2, \ldots, a_{r+1};q)_n \over
(q, b_1, b_2, \ldots, b_{r};q)_n} z^n , \\
 {}_{r+1}W_r(a_1;a_4,a_5,\ldots,a_{r+1};q,z)=
{}_{r+1}\phi_r\left[ {a_1,q a_1^{1/2},-q a_1^{1/2},a_4,\ldots,a_{r+1}\atop
a_1^{1/2},-a_1^{1/2},q a_1/a_4,\dots,qa_1/a_{r+1}};q,z\right].
\end{gather*}

\section[Fourfold summation formula for Koornwinder polynomials with one column diagram]{Fourfold summation formula for Koornwinder polynomials\\ with one column diagram}\label{F-Koornwinder}

Recall the definition of the Koornwinder polynomials.
Let $a$, $b$, $c$, $d$, $q$, $t$ be complex parameters. We assume that $|q|<1$.
Set $\alpha=(abcd/q)^{1/2}$ for simplicity.
Let $x=(x_1,\ldots,x_n)$ be a sequence of independent indeterminates.
The hyperoctahedral group of rank $n$ is denoted by $W_n = \mathbb{Z}_2^n \rtimes \mathfrak{S}_n$.
Let $\mathbb{C}\big[x_1^{\pm}, x_2^{\pm}, \ldots, x_n^{\pm}\big]^{W_n}$ be
the ring of $W_n$-invariant Laurent polynomials in~$x$.
For a partition $\lambda = (\lambda_1, \lambda_2, \ldots, \lambda_n)$ of length $n$,
i.e., $\lambda_i\in \bbZ_{\geq 0}$ and $\lambda_1\geq \cdots\geq \lambda_n$, we denote
by $m_{\lambda}=m_{\lambda}(x)$ the monomial symmetric polynomial being defined as the orbit sums of monomials
\begin{gather*}
m_{\lambda} = {1 \over |\text{Stab}(\lambda)|} \sum_{\mu \in W_n \cdot \lambda} \prod_{i} x_i^{\mu_i},
\end{gather*}
where $\text{Stab}(\lambda)=\{ s \in W_n \,|\, s \lambda = \lambda \}$.

Koornwinder's $q$-difference operator ${\mathcal D}_x={\mathcal D}_x(a,b,c,d|q,t)$ \cite{Ko} reads
\begin{gather*}
 {\mathcal D}_x=
\sum_{i=1}^n {(1-ax_i)(1-bx_i)(1-cx_i)(1-dx_i)\over
\alpha t^{n-1}\big(1-x_i^2\big)(1-qx_i^2)}
\prod_{j\neq i} {(1-t x_ix_j)(1-t x_i/x_j)\over (1-x_ix_j)(1-x_i/x_j)}
\big(T_{q,x_i}^{+1}-1\big) \\
\hphantom{{\mathcal D}_x=}{} +
\sum_{i=1}^n {(1-a/x_i)(1-b/x_i)(1-c/x_i)(1-d/x_i)\over
\alpha t^{n-1}\big(1-1/x_i^2\big)\big(1-q/x_i^2\big)}\\
\hphantom{{\mathcal D}_x=}{}\times
\prod_{j\neq i} {(1-t x_j/x_i)(1-t /x_ix_j)\over (1-x_j/x_i)(1-1/x_ix_j)}
\big(T_{q,x_i}^{-1}-1\big),
\end{gather*}
where we have used the notation $T_{q,x_i}^{\pm1}f(x_1,\ldots,x_i,\ldots ,x_n)=f\big(x_1,\ldots,q^{\pm 1}x_i,\ldots ,x_n\big)$.

The Koornwinder polynomial
$P_\lambda(x)=P_\lambda(x|a,b,c,d|q,t)\in \mathbb{C}\big[x_1^{\pm 1},\ldots,x_n^{\pm 1}\big]^{W_n}$
is uniquely characterized by the conditions
\begin{gather*}
(a) \quad \mbox{ $P_\lambda(x)=m_\lambda(x)+\mbox{lower terms}$ w.r.t.\ the dominance ordering}, \\
(b) \quad \mbox{ ${\mathcal D}_x P_\lambda=d_\lambda P_\lambda$.}
\end{gather*}
The eigenvalue $d_\lambda$ is explicitly written as
\begin{gather*}
 d_\lambda=\sum_{j=1}^n \big\langle abcdq^{-1}t^{2n-2j}q^{\lambda_j}\big\rangle
\big\langle q^{\lambda_j}\big\rangle
=
\sum_{j=1}^n \big\langle \alpha t^{n-j}q^{\lambda_j}; \alpha t^{n-j}\big\rangle,
\end{gather*}
where we have used the notations
$\langle x\rangle=x^{1/2}-x^{-1/2}$ and
$\langle x;y\rangle=\langle xy\rangle\langle x/y\rangle=x+x^{-1}-y-y^{-1}$
for simplicity of display.

The aim of this paper is to study the Koornwinder polynomials
$P_{(1^r)}(x|a,b,c,d|q,t)$ \mbox{($0\!\leq\! r\!\leq\! n$)} associated with the one column diagrams,
and establish some explicit formulas for them. Note that we will treat the
most general six parameter case with arbitrary $a$, $b$, $c$, $d$, $q$ and~$t$ for
any number~$n$ of variables.
No attempt, however, is made to
investigate the cases with two columns or more complicated partitions in this work.
The only exception to this rule is the Appendix where we present a conjecture
about the asymptotically free solution to the $q$-difference Toda equation of type~$B_n$.

Recall the symmetric Laurent polynomial $E_r(x)$'s in Definition~\ref{E_r}.
Our starting point in this paper is the fourfold summation formula
in Theorem~\ref{HS} (Theorem~2.2 in~\cite{HS}), namely
\begin{gather*}
P_{(1^r)}(x|a,b,c,d|q,t)=
\sum_{k,l,i,j\geq 0} (-1)^{i+j}
E_{r-2k-2l-i-j} (x)
\widehat{c}\,'_e\big(k,l;t^{n-r+1+i+j}\big) \widehat{c}_o\big(i,j; t^{n-r+1}\big).
\end{gather*}

We first need to derive a slightly modified version of this fourfold summation
formula as stated in Theorem~\ref{HSnew},
obtaining a better description of the transition matrices
associated with the following degeneration scheme:
\begin{gather}
P_{(1^r)}(x|a,b,c,d|q,t)\longleftrightarrow P_{(1^r)}(x|a,-a,c,d|q,t)
\longleftrightarrow P_{(1^r)}(x|a,-a,c,-c|q,t) \notag\\
\qquad{} \longleftrightarrow P_{(1^r)}\big(x|t^{1/2}c,-t^{1/2}c,c,-c|q,t\big)
\longleftrightarrow P_{(1^r)}\big(x|t^{1/2},-t^{1/2},1,-1|q,t\big). \label{dscheme}
\end{gather}
We prove that the transition matrices associated with
each of these degeneration steps are given in terms of the matrix inversion formula of
Bressoud.

In order to prove Theorem~\ref{HSnew} we require the following proposition.
\begin{prp} \label{prop1}
$\sum\limits_{i = 0}^m \widehat{c_o}^{\rm new} (i,m-i;s) =\sum\limits_{i = 0}^m \widehat{c_o} (i,m-i;s)$.
\end{prp}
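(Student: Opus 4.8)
The plan is to prove the identity one anti-diagonal at a time, keeping $m=i+j$ fixed, and to recognize each side as a terminating basic hypergeometric series so that Watson's transformation \cite{GR} between a very-well-poised ${}_8W_7$ and a balanced ${}_4\phi_3$ matches the two. First I would reduce the right-hand side. In $\widehat{c}_o(i,j;s)$ the middle block carrying the subscript $i+j$ depends on $m=i+j$ only, so in $\sum_{i=0}^m \widehat{c}_o(i,m-i;s)$ it factors out as the prefactor
\[
\frac{(s;t)_m(-sac/t;t)_m(s^2a^2c^2/t^3;t)_m}{(s^2abcd/t^2;t)_m(sac/t^{3/2};t)_m(-sac/t^{3/2};t)_m}.
\]
What remains is a convolution $\sum_{i=0}^m A_i B_{m-i}$ of the length-$i$ and length-$(m-i)$ blocks. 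Reversing the index through the reflection $(X;t)_{m-i}=\frac{(X;t)_m}{(t^{1-m}/X;t)_i}(-tX^{-1})^i t^{\binom{i}{2}-mi}$ collapses the powers $t^{\binom{i}{2}-mi}$ and turns the convolution into a single terminating ${}_4\phi_3$ of argument $t$, with upper parameters $t^{-m},-a/b,scd/t,-t^{2-m}/(sac)$ and lower parameters $-sac/t,-dt^{1-m}/c,t^{2-m}/(sab)$. Computing the parameter products shows $t\cdot(\text{upper product})=(\text{lower product})$, so this ${}_4\phi_3$ is balanced (Saalschützian).

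Next I would treat the left-hand side $\sum_{i=0}^m \widehat{c_o}^{\rm new}(i,m-i;s)$. Here the summand does not decouple, because the $j$-block carries the shifts $t^i$ and $t^{2i}$, which is precisely the signature of a very-well-poised series. Indeed the quadratic-base factors recombine, e.g.\ $(-s^2a^2cd/t^3;t^2)_i(-s^2a^2cd/t^2;t^2)_i=(-s^2a^2cd/t^3;t)_{2i}$, and the ratio $(t^{2i}s^2a^2c^2/t^3;t)_j/(t^{2i}s^2a^2c^2/t^3;t^2)_j$ simplifies, exposing a factor of the shape $(1-\Lambda t^{2i})/(1-\Lambda)$. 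After reversing the index $j=m-i$ as above, the whole anti-diagonal sum becomes, up to an explicit prefactor, a terminating very-well-poised ${}_8W_7$ in the single index $i$. Watson's transformation \cite{GR} rewrites this ${}_8W_7$ as an explicit multiple of a balanced terminating ${}_4\phi_3$; matching arguments under the substitution dictated by $a,b,c,d,s,t$ should reproduce exactly the balanced ${}_4\phi_3$ and the prefactor found on the right-hand side, giving the equality.

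The main obstacle will be parameter bookkeeping rather than any conceptual step. Concretely I must (i) fix the dictionary sending the base $a_1$ and the four free numerator parameters of the ${}_8W_7$ to expressions in $a,b,c,d,s,t$, and check that the very-well-poised and terminating conditions hold identically in these variables; and (ii) verify that the infinite-product prefactor in Watson's formula, which for a terminating series collapses to a finite ratio of $q$-Pochhammer symbols, reproduces precisely the $m$-dependent block pulled out of $\widehat{c}_o$ in the first step together with the reversal factors. Both are finite, mechanical computations, but keeping the shifted bases $t^i,t^{2i}$ and the quadratic bases $t^2$ consistent is where the care lies; confirming the reduction at $m=0,1$ first is a useful safeguard before carrying out the general matching.
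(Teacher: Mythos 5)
Your proposal is correct and takes essentially the same route as the paper's own proof: fix the anti-diagonal $m=i+j$, recognize $\sum_{i=0}^m \widehat{c_o}^{\rm new}(i,m-i;s)$ as a prefactor times a terminating very-well-poised ${}_8W_7$, recognize $\sum_{i=0}^m \widehat{c}_o(i,m-i;s)$ as a prefactor times a balanced terminating ${}_4\phi_3$, and match them via Watson's transformation. Your ${}_4\phi_3$ parameters (upper $t^{-m},-a/b,scd/t,-t^{2-m}/sac$; lower $-sac/t,-t^{1-m}d/c,t^{2-m}/sab$) agree exactly with those appearing in the paper's proof, so the remaining bookkeeping you describe is precisely what the paper carries out.
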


\begin{proof} We have
\begin{gather}
\sum_{i = 0}^m \widehat{c_o}^{\rm new} (i,m-i;s)
= {(-c/d;t)_i (s;t)_i \big({-}sa^2/t;t\big)_i \big(s^2a^2c^2/t^3; t\big)_m
\over
(t;t)_i \big(sac/t^{3/2};t\big)_i \big({-}sac/t^{3/2};t\big)_i \big({-}s^2a^2cd/t^2; t\big)_m}
d^m\label{8phi7} \\
\hphantom{\sum_{i = 0}^m \widehat{c_o}^{\rm new} (i,m-i;s)=}{} \times
{}_8W_7\big({-}s^2a^2cd/t^3; t^m s^2a^2c^2/t^3, sad/t, -a/b, scd/t, t^{-m}; t, -tb/c\big).\nonumber
\end{gather}
By Watson's transformation formula \cite[p.~43, equation~(2.5.1)]{GR},
the ${}_8W_7$ series in~$(\ref{8phi7})$ equals
\begin{gather*}
{
\big({-}s^2a^2cd/t^2, sab/t; t\big)_m
\over
\big(s^2abcd/t^2, -sa^2/t; t\big)_m
}
{}_4\phi_3
\left[
{
t^{-m}, -a/b, scd/t, -t^{-m+2}/sac
\atop
-t^{-m+1}d/c, -sac/t, t^{-m+2}/sab
}
; t, t
\right].
\end{gather*}
On the other hand, we have
\begin{gather}
\sum_{i = 0}^m \widehat{c_o} (i,m-i;s) =
{
\big(s, s^2a^2c^2/t^3, -c/d, sab/t ; t\big)_m
\over
\big(s^2abcd/t^2, sac/t^{3/2}, -sac/t^{3/2}, t ; t\big)_m
}
d^m\nonumber\\
\hphantom{\sum_{i = 0}^m \widehat{c_o} (i,m-i;s) =}{}\times
{}_4\phi_3
\left[
{
-a/b, scd/t, t^{-m}, -t^{-m+2}/sac
\atop
-sac/t, -t^{-m+1}d/c, t^{-m+2}/sab
}
; t, t
\right]. \label{co_4phi3}
\end{gather}
This shows that $\sum\limits_{i = 0}^m \widehat{c_o}^{\rm new} (i,m-i;s) =\sum\limits_{i = 0}^m \widehat{c_o} (i,m-i;s)$.
\end{proof}

\begin{proof}[Proof of Theorem \ref{HSnew}.] By Theorem \ref{HS} we have
\begin{gather*}
 P_{(1^r)}(x|a,b,c,d|q,t) =
\sum_{k,l,i,j\geq 0} (-1)^{i+j}
E_{r-2k-2l-i-j} (x)
\widehat{c}\,'_e\big(k,l;t^{n-r+1+i+j}\big) \widehat{c}_o\big(i,j; t^{n-r+1}\big) \\
\qquad{} =
\sum_{k,l \geq 0} \sum_{m \geq 0} (-1)^m
\widehat{c}\,'_e\big(k,l;t^{n-r+1+m}\big) E_{r-2k-2l-m} (x)
\sum_{i = 0}^m
 \widehat{c_o}\big(i,m-i; t^{n-r+1}\big) \\
\qquad{} =
\sum_{k,l \geq 0} \sum_{m \geq 0} (-1)^m
\widehat{c}\,'_e\big(k,l;t^{n-r+1+m}\big) E_{r-2k-2l-m} (x)
\sum_{i = 0}^m
 \widehat{c_o}^{\rm new}\big(i,m-i; t^{n-r+1}\big).
\end{gather*}
This completes the proof of Theorem \ref{HSnew}.
\end{proof}

Now we turn to an explanation of the meaning of the new fourfold summation formula
in Theorem~\ref{HSnew}, from the point of view of the degeneration scheme~(\ref{dscheme}).
\begin{lem}\label{lem-1}
We have
\begin{gather*}
\widehat{c}\,'_e(k,l;s) =
 { \big(tc^2/a^2 ; t^2\big)_k \big(sc^2t ; t^2\big)_k \big(s^2c^4/t^2 ; t^2\big)_k
\over
 \big(t^2 ; t^2\big)_k \big(sc^2/t ; t^2\big)_k \big(s^2a^2c^2/t ; t^2\big)_k }
{ \big(1/c^2 ; t\big)_l (s/t ; t)_{2k+l}
\over
 (t ; t)_l \big(sc^2 ; t\big)_{2k+l} }
{ 1-st^{2k+2l-1}
\over
 1-st^{-1} } a^{2k}c^{2l}\\
\hphantom{\widehat{c}\,'_e(k,l;s)}{} =
 { \big(tc^2/a^2 ; t^2\big)_k \big(sc^2t ; t^2\big)_k \big(s^2c^4/t^2 ; t^2\big)_k
\over
 \big(t^2 ; t^2\big)_k \big(sc^2/t ; t^2\big)_k \big(s^2a^2c^2/t ; t^2\big)_k }
{ (s/t ; t)_{2k}
\over
 \big(sc^2 ; t\big)_{2k} }
{ 1-st^{2k-1}
\over
 1-st^{-1} } a^{2k}\\
\hphantom{\widehat{c}\,'_e(k,l;s)=}{} \times
{ \big(1/c^2 ; t\big)_l \big(t^{2k}s/t ; t\big)_{l}
\over
 (t ; t)_l \big(t^{2k}sc^2 ; t\big)_{l} }
{ 1-st^{2k}t^{2l-1}
\over
 1-st^{2k}t^{-1} } c^{2l}\\
\hphantom{\widehat{c}\,'_e(k,l;s)}{} = \widehat{c}\,'_e(k,0;s) \widehat{c}\,'_e\big(0,l;t^{2k}s\big) ,
\end{gather*}
and
\begin{gather*}
\widehat{c_o}^{\rm new} (i,j;s) =
{
(-a/b;t)_i (s;t)_i (sac/t;t)_i (sad/t;t)_i (scd/t;t)_i \big({-}s^2a^2cd/t^3; t\big)_i
\over
(t;t)_i \big(s^2abcd/t^2; t\big)_i \big({-}s^2a^2cd/t^3;t^2\big)_i \big({-}s^2a^2cd/t^2; t^2\big)_i
}
b^i \\
\hphantom{\widehat{c_o}^{\rm new} (i,j;s)=}{}\times {
(-c/d;t)_j \big(t^i s;t\big)_j \big({-}t^i s a^2/t;t\big)_j \big(t^{2i}s^2 a^2c^2/t^3; t\big)_j
\over
(t;t)_j \big({-}t^{2i}s^2a^2cd/t^2; t\big)_j \big(t^{2i} s^2 a^2c^2/t^3; t^2\big)_j
}
d^j \notag \\
\hphantom{\widehat{c_o}^{\rm new} (i,j;s)}{} = \widehat{c_o}^{\rm new} (i,0;s)\widehat{c_o}^{\rm new} \big(0,j;t^i s\big).
\end{gather*}
Hence we have the following recursive structure:
\begin{gather*}
P_{(1^r)}(x|a,b,c,d|q,t) =
\sum_{k,l,i,j\geq 0} (-1)^{i+j}
E_{r-2k-2l-i-j} (x)
\widehat{c}\,'_e\big(k,l;t^{n-r+1+i+j}\big) \widehat{c}_o\big(i,j; t^{n-r+1}\big) \\
\hphantom{P_{(1^r)}(x|a,b,c,d|q,t)}{} =
\sum_{i\geq 0}(-1)^i \widehat{c_o}^{\rm new} (i,0;s)
\Biggl(
\sum_{j\geq 0} (-1)^j \widehat{c_o}^{\rm new} \big(0,j;t^i s\big)\\
\hphantom{P_{(1^r)}(x|a,b,c,d|q,t)=}{} \times \Biggl(
\sum_{k\geq 0} \widehat{c}\,'_e\big(k,0;t^{i+j}s\big) \Biggl(
\sum_{l\geq 0} \widehat{c}\,'_e\big(0,l;t^{i+j+2k}s\big)E_{r-2k-2l-i-j} (x)\Biggr)\Biggr)\Biggr) ,
\end{gather*}
where $s=t^{n-r+1}$.
\end{lem}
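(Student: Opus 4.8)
The plan is to establish the two claimed factorizations of $\widehat{c}\,'_e$ and $\widehat{c_o}^{\rm new}$ by elementary $q$-Pochhammer manipulations, and then to substitute them into the fourfold summation of Theorem~\ref{HSnew} to read off the nested structure. The factorization of $\widehat{c_o}^{\rm new}$ is essentially built into its presentation, since the formula in Theorem~\ref{HSnew} is already displayed as a product of an $i$-dependent block and a $j$-dependent block. First I would verify that the $j$-block coincides with $\widehat{c_o}^{\rm new}(0,j;t^i s)$, which amounts to substituting $s \mapsto t^i s$ and using $(t^i s)^2 = t^{2i} s^2$ to match the arguments $-t^i s a^2/t$, $t^{2i}s^2a^2c^2/t^3$ and $-t^{2i}s^2a^2cd/t^2$ term by term. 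Since the $i$-block is exactly $\widehat{c_o}^{\rm new}(i,0;s)$, the identity $\widehat{c_o}^{\rm new}(i,j;s) = \widehat{c_o}^{\rm new}(i,0;s)\,\widehat{c_o}^{\rm new}(0,j;t^i s)$ follows immediately.

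The factorization of $\widehat{c}\,'_e$ requires a little more care, since here the indices $k$ and $l$ are genuinely entangled through the subscripts $2k+l$ and through the balancing factor $(1-st^{2k+2l-1})/(1-st^{-1})$. The key tool is the splitting identity $(x;t)_{2k+l} = (x;t)_{2k}\,(t^{2k}x;t)_l$, applied to both $(s/t;t)_{2k+l}$ and $(sc^2;t)_{2k+l}$; this peels off the $l$-dependence from these two Pochhammer symbols and produces precisely the factors $(t^{2k}s/t;t)_l$ and $(t^{2k}sc^2;t)_l$ appearing in $\widehat{c}\,'_e(0,l;t^{2k}s)$. For the balancing factor I would telescope,
\begin{gather*}
\frac{1-st^{2k+2l-1}}{1-st^{-1}} = \frac{1-st^{2k-1}}{1-st^{-1}}\cdot\frac{1-(t^{2k}s)t^{2l-1}}{1-(t^{2k}s)t^{-1}},
\end{gather*}
the second fraction being exactly the balancing factor of $\widehat{c}\,'_e(0,l;t^{2k}s)$. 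Since all of the $(\,\cdot\,;t^2)$-Pochhammer symbols depend on $k$ alone and survive unchanged in $\widehat{c}\,'_e(k,0;s)$, and since $a^{2k}c^{2l}$ splits trivially, collecting terms gives the intermediate display in the statement and hence $\widehat{c}\,'_e(k,l;s) = \widehat{c}\,'_e(k,0;s)\,\widehat{c}\,'_e(0,l;t^{2k}s)$.

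Finally, to obtain the recursive structure I would set $s = t^{n-r+1}$ and substitute both factorizations into Theorem~\ref{HSnew}. The argument of $\widehat{c}\,'_e$ there is $t^{n-r+1+i+j} = t^{i+j}s$, so the factorization gives $\widehat{c}\,'_e(k,l;t^{i+j}s) = \widehat{c}\,'_e(k,0;t^{i+j}s)\,\widehat{c}\,'_e(0,l;t^{i+j+2k}s)$, while $\widehat{c_o}^{\rm new}(i,j;s) = \widehat{c_o}^{\rm new}(i,0;s)\,\widehat{c_o}^{\rm new}(0,j;t^i s)$. Because each of the four one-parameter factors then depends on a single summation index together with a shifted base---namely $s$ for the $i$-sum, $t^i s$ for the $j$-sum, $t^{i+j}s$ for the $k$-sum and $t^{i+j+2k}s$ for the $l$-sum---the fourfold sum reorganizes into the claimed nest of four single sums, with $E_{r-2k-2l-i-j}(x)$ sitting in the innermost position.

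The proof is almost entirely bookkeeping; the only real obstacle is tracking the base shifts correctly, in particular making sure that the $t^{2k}$ shift produced by the $\widehat{c}\,'_e$ splitting combines with the $t^i$ and $t^{i+j}$ shifts produced by the $\widehat{c_o}^{\rm new}$ splitting and by the original summation to give exactly $t^{i+j+2k}s$ as the base of the innermost $l$-sum. No nontrivial hypergeometric transformation is needed at this stage, since all of the analytic content has already been absorbed into Theorem~\ref{HSnew} via Proposition~\ref{prop1}.
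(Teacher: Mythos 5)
Your proposal is correct and follows essentially the same route as the paper: the paper's own justification of Lemma~\ref{lem-1} is precisely the inline manipulation displayed in its statement, namely the splitting $(x;t)_{2k+l}=(x;t)_{2k}\big(t^{2k}x;t\big)_l$ applied to $(s/t;t)_{2k+l}$ and $\big(sc^2;t\big)_{2k+l}$ together with the telescoping of the balancing factor, the observation that the $\widehat{c_o}^{\rm new}$ factorization is already built into its two-block presentation, and then direct substitution into Theorem~\ref{HSnew} (whose equivalence with Theorem~\ref{HS} via Proposition~\ref{prop1} supplies the first equality of the recursive display). Your bookkeeping of the base shifts $s$, $t^is$, $t^{i+j}s$, $t^{i+j+2k}s$ is exactly right, so there is nothing to add.
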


\begin{dfn} \label{Pdefshort} We denote the specialization of the parameters in the degeneration scheme~(\ref{dscheme}) by the Roman numbers {\rm IV, III, II, I} as follows:
\begin{align*}
\begin{array}{llllll}
&\Bigl|& (a,b,c,d) &\Bigl|&P^{BC_n}_{(1^r)}(x|a,b,c,d|q,t)\\\hline
{\rm IV}&\Bigl|& (a,b,c,d)&\Bigl|&P^{BC_n,{\rm IV}}_{(1^r)}(x)=P^{BC_n}_{(1^r)}(x|a,b,c,d|q,t)\\
{\rm III}&\Bigl|&(a,-a,c,d)&\Bigl|&
P^{BC_n,{\rm III}}_{(1^r)}(x)=P^{BC_n}_{(1^r)}(x|a,-a,c,d|q,t)\\
{\rm II}&\Bigl|&(a,-a,c,-c)&\Bigl|&
P^{BC_n,{\rm II}}_{(1^r)}(x)=P^{BC_n}_{(1^r)}(x|a,-a,c,-c|q,t)\\
{\rm I}&\Bigl|&\big(t^{1/2}c,-t^{1/2}c,c,-c\big)&\Bigl|&
P^{BC_n,{\rm I}}_{(1^r)}(x)=P^{BC_n}_{(1^r)}\big(x|t^{1/2}c,-t^{1/2}c,c,-c|q,t\big)\\\hline
&\Bigl|&\big(t^{1/2},-t^{1/2},1,-1\big)&\Bigl|&
E_r(x)=P^{BC_n}_{(1^r)}\big(x|t^{1/2},-t^{1/2},1,-1|q,t\big)
\end{array}
\end{align*}
\end{dfn}

\begin{lem}\label{lem-2}
When
the parameters are in the strata {\rm III},
we have $\widehat{c_o}^{\rm new} (i,0;s)=0$ for $i>0$.
When
the parameters are in the strata {\rm II},
we have $\widehat{c_o}^{\rm new} (i,0;s)=0$ for $i>0$ and
$\widehat{c_o}^{\rm new} (0,j;s)=0$ for $j>0$.
When
the parameters are in the strata {\rm I},
we have $\widehat{c_o}^{\rm new} (i,0;s)=0$ for $i>0$,
$\widehat{c_o}^{\rm new} (0,j;s)=0$ for $j>0$ and
$\widehat{c}\,'_e(k,0;s)=0$ for $k>0$.
When $(a,b,c,d)=\big(t^{1/2},-t^{1/2},1,-1\big)$, we have
$P^{BC_n}_{(1^r)}\big(x|t^{1/2},-t^{1/2},1,-1|q,t\big)=E_r(x)$.
\end{lem}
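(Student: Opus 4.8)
The plan is to read each vanishing directly off the factorized forms of the coefficients established in Lemma~\ref{lem-1}, observing that every degeneration step in~(\ref{dscheme}) collapses exactly one numerator Pochhammer factor to $(1;t)_\bullet$ or $(1;t^2)_\bullet$, which is zero as soon as its index is positive (the $k=0$ factor being $1-1=0$). First I would treat stratum {\rm III}, where $b=-a$. In the expression for $\widehat{c_o}^{\rm new}(i,0;s)$ the numerator carries the factor $(-a/b;t)_i$, and the substitution $b=-a$ turns this into $(1;t)_i=\prod_{k=0}^{i-1}\big(1-t^k\big)$, which vanishes for $i>0$. Hence $\widehat{c_o}^{\rm new}(i,0;s)=0$ for $i>0$ in stratum {\rm III}.

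Next, for stratum {\rm II} I would additionally impose $d=-c$. Setting $i=0$ in the formula of Lemma~\ref{lem-1} exhibits $\widehat{c_o}^{\rm new}(0,j;s)$ with the numerator factor $(-c/d;t)_j$, which the substitution $d=-c$ turns into $(1;t)_j$; therefore $\widehat{c_o}^{\rm new}(0,j;s)=0$ for $j>0$, while the vanishing of $\widehat{c_o}^{\rm new}(i,0;s)$ for $i>0$ persists since $b=-a$ still holds. For stratum {\rm I} I would further impose $a=t^{1/2}c$ (together with $b=-a$, $d=-c$), so that $a^2=tc^2$. Then in $\widehat{c}\,'_e(k,0;s)$ the numerator factor $\big(tc^2/a^2;t^2\big)_k$ becomes $\big(1;t^2\big)_k$, which vanishes for $k>0$; the two earlier vanishings remain in force.

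Finally, for the full specialization $(a,b,c,d)=\big(t^{1/2},-t^{1/2},1,-1\big)$ I would additionally set $c=1$. In $\widehat{c}\,'_e(0,l;s)$ the numerator factor $\big(1/c^2;t\big)_l$ then becomes $(1;t)_l$, which vanishes for $l>0$. At this point all four summation indices in the recursive form of Lemma~\ref{lem-1} are pinned to zero, the only surviving term being $E_r(x)\,\widehat{c}\,'_e(0,0;\cdot)\,\widehat{c_o}^{\rm new}(0,0;\cdot)$. Since every Pochhammer symbol of index zero equals $1$ and the prefactor $\big(1-st^{2k-1}\big)/\big(1-st^{-1}\big)$ equals $1$ at $k=0$, both coefficients equal $1$, giving $P^{BC_n}_{(1^r)}\big(x|t^{1/2},-t^{1/2},1,-1|q,t\big)=E_r(x)$.

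The argument is essentially mechanical, so I do not expect a genuine obstacle; the only point demanding care is the bookkeeping. One must confirm that the factor claimed to vanish really sits in the \emph{numerator} with the correct base ($t$ versus $t^2$), and that the normalizations $\widehat{c}\,'_e(0,0;\cdot)=\widehat{c_o}^{\rm new}(0,0;\cdot)=1$ hold, so that no denominator factor degenerates simultaneously and produces an indeterminate $0/0$ at the specialized parameters.
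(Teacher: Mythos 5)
Your proposal is correct and follows essentially the same route as the paper, whose proof simply states that the claims follow immediately from the definitions of $\widehat{c_o}^{\rm new}(i,j;s)$, $\widehat{c}\,'_e(k,l;s)$ and Theorem~\ref{HSnew}; you have merely made explicit which numerator Pochhammer factor collapses to $(1;t)_\bullet$ or $\big(1;t^2\big)_\bullet$ at each stratum, and checked the normalization of the surviving $(0,0,0,0)$ term. Your closing remark about verifying that no denominator factor degenerates simultaneously is exactly the right point of care, and it holds here for generic $q$, $t$ and the remaining free parameters.
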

\begin{proof} These immediately follow from the definitions of $\widehat{c_o}^{\rm new} (i,j;s)$ and $\widehat{c}\,'_e(k,l;s)$ and
Theorem~\ref{HSnew}.
\end{proof}

\begin{thm} \label{dmat} We have
\begin{subequations}\label{4degs}
\begin{gather}
P^{BC_n,{\rm IV}}_{(1^r)}(x)=
\sum_{i \geq 0}
(-1)^i \widehat{c_o}^{\rm new}\big(i,0 ;t^{n-r+1}\big)
P^{BC_n,{\rm III}}_{(1^{r-i})}(x), \label{deg1} \\
P^{BC_n,{\rm III}}_{(1^r)}(x) =
\sum_{j \geq 0} (-1)^j \widehat{c_o}^{\rm new}\big(0,j ;t^{n-r+1}\big)
P^{BC_n,{\rm II}}_{(1^{r-j})}(x), \label{deg2} \\
P^{BC_n,{\rm II}}_{(1^{r})}(x) =
\sum_{k \geq 0} \widehat{c_e'}\big(k,0 ;t^{n-r+1}\big)
P^{BC_n,{\rm I}}_{(1^{r-2k})}(x), \label{deg3} \\
P^{BC_n,{\rm I}}_{(1^{r})}(x)=\sum_{l \geq 0} \widehat{c_e'}\big(0,l ;t^{n-r+1}\big)E_{r-2l}(x). \label{deg4}
\end{gather}
\end{subequations}
Here we have used the shorthand notation in Definition~{\rm \ref{Pdefshort}}
\begin{alignat*}{3}
&P^{BC_n,{\rm IV}}_{(1^r)}(x)=P^{BC_n}_{(1^r)}(x|a,b,c,d|q,t), \qquad&&
P^{BC_n,{\rm III}}_{(1^r)}(x)=P^{BC_n}_{(1^r)}(x|a,-a,c,d|q,t),& \\
&P^{BC_n,{\rm II}}_{(1^r)}(x)=P^{BC_n}_{(1^r)}(x|a,-a,c,-c|q,t), \qquad&&
P^{BC_n,{\rm I}}_{(1^r)}(x)=P^{BC_n}_{(1^r)}\big(x|t^{1/2}c,-t^{1/2}c,c,-c|q,t\big).&
\end{alignat*}
\end{thm}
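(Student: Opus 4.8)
The plan is to peel the nested fourfold summation of Lemma~\ref{lem-1} one layer at a time, specializing the parameters to each successive stratum and invoking the vanishing statements of Lemma~\ref{lem-2} to truncate the outer sums. Throughout I write $s=t^{n-r+1}$ and use the normalizations $\widehat{c_o}^{\rm new}(0,0;s)=\widehat{c}\,'_e(0,0;s)=1$, which are immediate since every Pochhammer symbol with subscript $0$ equals $1$. Since the recursive identity of Lemma~\ref{lem-1} holds identically in the parameters $a,b,c,d,q,t$, it may be specialized freely to each stratum of Definition~\ref{Pdefshort}.

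First I would establish the bottom relation \eqref{deg4}. Specializing the recursive structure of Lemma~\ref{lem-1} to stratum~{\rm I}, Lemma~\ref{lem-2} forces $i=j=k=0$, since the factors $\widehat{c_o}^{\rm new}(i,0;s)$, $\widehat{c_o}^{\rm new}(0,j;s)$ and $\widehat{c}\,'_e(k,0;s)$ all vanish for positive index. Only the innermost $l$-sum survives, giving $P^{BC_n,{\rm I}}_{(1^r)}(x)=\sum_{l\ge 0}\widehat{c}\,'_e(0,l;s)E_{r-2l}(x)$, which is precisely \eqref{deg4}.

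Next I would climb the scheme, proving \eqref{deg3}, \eqref{deg2} and \eqref{deg1} in turn, each time recognizing the surviving inner sums as the lower-stratum polynomial just constructed. For \eqref{deg3}, specializing to stratum~{\rm II} kills the $i$- and $j$-sums, leaving $\sum_{k}\widehat{c}\,'_e(k,0;s)\sum_{l}\widehat{c}\,'_e(0,l;t^{2k}s)E_{r-2k-2l}(x)$; since the natural base parameter at degree $r-2k$ is $t^{n-(r-2k)+1}=t^{2k}s$, the inner $l$-sum equals $P^{BC_n,{\rm I}}_{(1^{r-2k})}(x)$ by \eqref{deg4}, yielding \eqref{deg3}. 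For \eqref{deg2} the specialization to stratum~{\rm III} kills only the $i$-sum, and the inner $(k,l)$-double sum is recognized as $P^{BC_n,{\rm II}}_{(1^{r-j})}(x)$ via \eqref{deg3}; for \eqref{deg1} no vanishing occurs and the inner $(j,k,l)$-triple sum is recognized as $P^{BC_n,{\rm III}}_{(1^{r-i})}(x)$ via \eqref{deg2}. In every step the essential bookkeeping is that stripping an outer index $i$ simultaneously shifts $r\mapsto r-i$ and $s\mapsto t^i s$, which is exactly the argument carried by the coefficient in the lower equation.

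The one point requiring care — and which I regard as the main obstacle — is the consistency of the split coefficients across strata: when an inner sum is recognized as a lower-stratum polynomial, the coefficients $\widehat{c}\,'_e$ and the surviving pieces of $\widehat{c_o}^{\rm new}$ must take identical values in the two strata. This holds because, from the explicit formulas of Lemma~\ref{lem-1}, $\widehat{c}\,'_e(k,l;s)$ depends only on $a$ and $c$, while $\widehat{c_o}^{\rm new}(0,j;s)$ is independent of $b$; since each degeneration step in \eqref{dscheme} specializes only the parameters $b$ and $d$ and leaves $a$ and $c$ fixed, the coefficients entering the recognition are unchanged and the identification is legitimate. Checking these parameter-independence properties against the explicit formulas, together with the $t^i$-shift of the base parameter, is the only nonformal ingredient of the argument.
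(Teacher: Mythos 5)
Your overall strategy is exactly the paper's: start at stratum I of the degeneration scheme \eqref{dscheme}, use the nested form of the fourfold sum in Lemma~\ref{lem-1} together with the vanishing statements of Lemma~\ref{lem-2} to truncate the outer sums, and climb one stratum at a time, recognizing the surviving inner sums as the lower-stratum polynomial just constructed (with the correct bookkeeping $r \mapsto r-i$, $s \mapsto t^i s$). The paper's own proof is terser and never spells out the coefficient-consistency issue; you are right to isolate it as the one nonformal ingredient.

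However, your justification of that consistency step contains a genuine error. You claim that ``each degeneration step in \eqref{dscheme} specializes only the parameters $b$ and $d$ and leaves $a$ and $c$ fixed.'' This is true for IV$\,\to\,$III (where $b \to -a$) and for III$\,\to\,$II (where $d \to -c$), but it is false for the remaining steps: II$\,\to\,$I sets $a = t^{1/2}c$, and I$\,\to\,$bottom sets $c = 1$. Consequently your argument does not cover the recognition needed for \eqref{deg3}: there the inner $l$-sum carries the coefficients $\widehat{c}\,'_e\big(0,l;t^{2k}s\big)$ evaluated at stratum II parameters, while the expansion \eqref{deg4} of $P^{BC_n,{\rm I}}_{(1^{r-2k})}(x)$ carries the same coefficients evaluated at stratum I parameters, i.e., after the substitution $a \to t^{1/2}c$. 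The fact that $\widehat{c}\,'_e(k,l;s)$ ``depends only on $a$ and $c$'' is not sufficient, precisely because $a$ itself changes in this step. The gap is repaired by the finer observation that all $a$-dependence of $\widehat{c}\,'_e(k,l;s)$ sits in factors indexed by $k$, namely $\big(tc^2/a^2;t^2\big)_k$, $\big(s^2a^2c^2/t;t^2\big)_k$ and $a^{2k}$, so that $\widehat{c}\,'_e(0,l;s)$ depends on $c$ alone and is therefore invariant under $a \to t^{1/2}c$; and the step I$\,\to\,$bottom requires no consistency at all, since $E_r(x)$ is independent of $(a,b,c,d)$ and \eqref{deg4} is proved entirely within stratum I. With this correction your proof is complete and coincides with the paper's.
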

\begin{proof}
First, set the parameters $(a,b,c,d)$ as in the strata I. Then
in view of Lemmas~\ref{lem-1} and~\ref{lem-2}, we have~(\ref{deg4}).
Next,
when we go up by one step to the strata~II,
we have~(\ref{deg3}) from~(\ref{deg4}),
Lemmas~\ref{lem-1} and~\ref{lem-2}.
In the same way,
when we go up to the strata~III,
we have~(\ref{deg2}) from~(\ref{deg3}),
Lemmas~\ref{lem-1} and~\ref{lem-2}.
Going up one more time to the top strata IV,
we have~(\ref{deg1}) from~(\ref{deg2}),
Lemmas~\ref{lem-1} and~\ref{lem-2}.
This completes the proof of Theorem~\ref{dmat}.
\end{proof}

\section[Matrix inversions for degeneration scheme of Koornwinder polynomials]{Matrix inversions for degeneration scheme\\ of Koornwinder polynomials} \label{MatrixInversion}

In this section we investigate the transition matrices
appearing in Theorem \ref{dmat} and their inverse matrices,
in terms of the matrix inversion formula of Bressoud \cite{B}.

\begin{thm}[{\cite[p.~1, Theorem]{B}, \cite[p.~5, Corollary]{L}}] \label{Bressoud-1} Define the infinite lower triangular
matrix
$\mathcal{M}(u,v;x,y;q)=(\mathcal{M}_{i,j}(u,v;x,y;q))_{0\leq i,j<\infty}$ with entries given by
\begin{gather*}
\mathcal{M}_{r,r-2i}(u,v;x,y;q)
=
 y^i v^i { (x/y ; q)_i \over (q ; q)_i}
{ \big(u q^{r-2i} ; q\big)_{2i} \over \big(uxq^{r-i} ; q\big)_i \big(uyq^{r-2i+1} ; q\big)_i }, \qquad
 r, i \in \mathbb{Z}_{\geq 0}, \ i \leq \lfloor r/2 \rfloor ,
\end{gather*}
and zero otherwise. Then we have
$\mathcal{M}(u,v;x,y;q) \mathcal{M}(u,v;y,z;q)=\mathcal{M}(u,v;x,z;q)$.
In particular, $\mathcal{M}(u,v;x,y;q)$ and $\mathcal{M}(u,v;y,x;q)$ are mutually inverse.
\end{thm}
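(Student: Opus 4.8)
The plan is to verify the matrix identity entrywise and reduce it to a single terminating $q$-hypergeometric summation. Since each matrix $\mathcal{M}(u,v;x,y;q)$ is lower triangular and supported only on the even subdiagonals $r\mapsto r-2i$, the same holds for the product, so it suffices to compare the $(r,r-2m)$ entries for every $r$ and $m$. The matrix product then collapses to a finite sum over the intermediate index $r-2i$ with $0\le i\le m$, and the identity to be established reads
\[
\sum_{i=0}^{m}\mathcal{M}_{r,r-2i}(u,v;x,y;q)\,\mathcal{M}_{r-2i,r-2m}(u,v;y,z;q)
\;=\;\mathcal{M}_{r,r-2m}(u,v;x,z;q).
\]
Here the second factor is obtained from the defining formula by the substitution $r\mapsto r-2i$, $i\mapsto m-i$, $(x,y)\mapsto(y,z)$, so that its $q$-shifted factorials are based at $uq^{r-2m}$, $uyq^{r-i-m}$ and $uzq^{r-2m+1}$.

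Next I would write out all three entries explicitly. The parameter $v$ enters only through $v^{i}v^{m-i}=v^{m}$ on the left and $v^{m}$ on the right, so it cancels and plays no role; similarly the powers of $z$ combine to $z^{m}$. After dividing both sides by the target entry $\mathcal{M}_{r,r-2m}(u,v;x,z;q)$, the identity becomes the assertion that a certain terminating sum in $i$ equals $1$. To recognise its type I would split $(uq^{r-2i};q)_{2i}=(uq^{r-2i};q)_i\,(uq^{r-i};q)_i$ and convert every factor carrying the index $m-i$ into one carrying the index $i$ by means of the reflection formula $(a;q)_{m-i}=(a;q)_m\,(-q/a)^{i}q^{\binom{i}{2}-mi}/(q^{1-m}/a;q)_i$. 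Collecting the resulting $(\,\cdot\,;q)_i$ symbols together with the net power of $q^{i}$ recasts the left-hand side as a balanced terminating series in the single summation variable $i$.

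The sum can then be evaluated by the $q$-Pfaff--Saalsch\"utz theorem
\[
{}_{3}\phi_{2}\!\left[{q^{-m},\,A,\,B\atop C,\,ABq^{1-m}/C};q,q\right]
=\frac{(C/A;q)_m\,(C/B;q)_m}{(C;q)_m\,(C/AB;q)_m},
\]
the parameters $A$, $B$, $C$ being read off from the rewritten summand; substituting them should reproduce precisely the factors $(x/z;q)_m$, $(uxq^{r-m};q)_m$ and $(uzq^{r-2m+1};q)_m$ occurring in $\mathcal{M}_{r,r-2m}(u,v;x,z;q)$. I expect the main obstacle to be exactly this bookkeeping step: verifying that after the reflections and the split of $(uq^{r-2i};q)_{2i}$ the spurious factors pair off so that the series is genuinely a balanced ${}_{3}\phi_{2}$ rather than an unsummable ${}_{4}\phi_{3}$, and that its third lower parameter equals $ABq^{1-m}/C$ so that Saalsch\"utz applies identically in $u,x,y,z,q,r$. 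Everything else is routine manipulation of $q$-shifted factorials.

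Finally, the mutual-inverse assertion follows at once. Setting $z=x$ in the identity just proved gives $\mathcal{M}(u,v;x,y;q)\,\mathcal{M}(u,v;y,x;q)=\mathcal{M}(u,v;x,x;q)$, and in $\mathcal{M}(u,v;x,x;q)$ the factor $(x/y;q)_i$ specialises to $(1;q)_i$, which vanishes for every $i\ge 1$ and equals $1$ for $i=0$. Hence $\mathcal{M}(u,v;x,x;q)$ is the identity matrix, and therefore $\mathcal{M}(u,v;x,y;q)$ and $\mathcal{M}(u,v;y,x;q)$ are inverse to one another.
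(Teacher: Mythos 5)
The first thing to note is that the paper contains no proof of this theorem: it is quoted verbatim from Bressoud \cite{B} and Lassalle \cite{L}, so your proposal has to stand on its own merits rather than be compared with an internal argument. Its outer architecture is sound. The entrywise reduction to the terminating identity $\sum_{i=0}^{m}\mathcal{M}_{r,r-2i}(u,v;x,y;q)\,\mathcal{M}_{r-2i,r-2m}(u,v;y,z;q)=\mathcal{M}_{r,r-2m}(u,v;x,z;q)$ is set up correctly (including the bases $uq^{r-2m}$, $uyq^{r-i-m}$, $uzq^{r-2m+1}$ in the second factor), the cancellation of $v$ is right, and the final step is also correct: setting $z=x$ and using $(1;q)_i=0$ for $i\geq 1$ shows $\mathcal{M}(u,v;x,x;q)$ is the identity, so the inversion follows from the semigroup law.

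The gap is the central summation step: the sum is \emph{not} a balanced ${}_3\phi_2$, and $q$-Pfaff--Saalsch\"utz cannot evaluate it. This is intrinsic -- no bookkeeping with reflection formulas can change it -- because the type of a hypergeometric series is determined by its term ratio, which is invariant under rewriting of the Pochhammer symbols. Note first a simplification you missed: the two $u$-factors in the numerator collapse, $(uq^{r-2i};q)_{2i}(uq^{r-2m};q)_{2m-2i}=(uq^{r-2m};q)_{2m}$, independently of $i$, so your proposed splitting of $(uq^{r-2i};q)_{2i}$ is beside the point. The genuinely $i$-dependent factors are the denominators $(uyq^{r-2i+1};q)_i$ and $(uyq^{r-i-m};q)_{m-i}$, whose bases contain $q^{-2i}$; writing $S_i$ for the $i$-th summand, a direct computation gives
\begin{gather*}
\frac{S_{i+1}}{S_i}=\frac{y}{z}\cdot
\frac{(1-(x/y)q^i)(1-q^{m-i})(1-uzq^{r-m-i})(1-uyq^{r-i})(1-uyq^{r-2i-2})}
{(1-q^{i+1})(1-(y/z)q^{m-i-1})(1-uxq^{r-i-1})(1-uyq^{r-i-m-1})(1-uyq^{r-2i})},
\end{gather*}
a rational function of $q^i$ of degree $(6,6)$, whereas a ${}_3\phi_2$ has degree $(3,3)$. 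The quadratic pair $(1-uyq^{r-2i-2})/(1-uyq^{r-2i})$ is exactly the very-well-poised factor $(1-aq^{2i+2})/(1-aq^{2i})$ with $a=q^{-r}/uy$, and matching the remaining linear factors identifies the sum as the terminating very-well-poised series ${}_6W_5\big(q^{-r}/uy;\,q^{-m},\,x/y,\,q^{m-r}/uz;\,q,\,qz/x\big)$, whose argument equals $aq/(bcd)$. The correct tool is therefore Rogers' terminating ${}_6\phi_5$ summation \cite[equation~(2.4.2)]{GR}, not $q$-Saalsch\"utz; inserting its evaluation and undoing the normalization by the $i=0$ term does reproduce $\mathcal{M}_{r,r-2m}(u,v;x,z;q)$, so your proof is repaired by exchanging the summation theorem at its core. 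In short, the step you yourself flagged as the main obstacle is precisely the one that fails as planned -- the series is neither a balanced ${}_3\phi_2$ nor an unsummable ${}_4\phi_3$, but a summable ${}_6W_5$.
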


\begin{dfn}\label{Bressoud-2}
Set
\[ d_{\mathcal{M}}(u,v)_r = { \big(t^2 v^{1/2} ; t\big)_r \over \big(u^{1/2} ; t\big)_r } \big(u^{1/4}/v^{3/4}\big)^r.
\]
Let $\widetilde{\mathcal{M}}(u,v;x,y;t)$ denote the conjugation of the matrix $\mathcal{M}\big(u,v,x,y;t^2\big)$
by the $d_{\mathcal{M}}(u,v)_r$ with entries
\begin{gather*}
 \widetilde{\mathcal{M}}_{r,r-2i}(u,v;x,y;t)=
 \mathcal{M}_{r,r-2i}\big(u,v;x,y;t^2\big) d_{\mathcal{M}}(u,v)_r/d_{\mathcal{M}}(u,v)_{r-2i} \nonumber \\
\qquad{} =
 { (x/y ; t^2)_i \over \big(t^2 ; t^2\big)_i}
{ \big(v^{1/2} t^{r-2i+2} ; t\big)_{2i} \over \big(u^{1/2} t^{r-2i} ; t\big)_{2i} }
{ \big(u t^{2r-4i} ; t^2\big)_{2i} \over \big(uxt^{2r-2i} ; t^2\big)_i \big(uyt^{2r-4i+2} ; t^2\big)_i }
(y u^{1/2}/v^{1/2})^{i}.
\end{gather*}
Note that $\widetilde{\mathcal{M}}(u,v;x,y;t)$ and $\widetilde{\mathcal{M}}(u,v;y,x;t)$ are mutually inverse.
\end{dfn}

\begin{thm}\label{Bressoud-3}
Define the matrix $\mathcal{K}(x,y;q)$ with entries
\begin{gather*}
\mathcal{K}_{i,j}(x,y;q)=y^{i-j}
{ (x/y;q)_{i-j} \over (q;q)_{i-j} }
{1 \over \big(xq^{i+j};q\big)_{i-j} \big(yq^{2j+1};q\big)_{i-j} }.
\end{gather*}
Then
$\mathcal{K}(x,y;q)$ and $\mathcal{K}(y,x;q)$ are mutually inverse.
\end{thm}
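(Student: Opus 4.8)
The plan is to prove the inversion by a direct entrywise computation, reducing it to a single terminating very-well-poised sum. First I would record the two structural facts visible from the defining formula: $\mathcal{K}_{i,j}(x,y;q)$ vanishes unless $i\ge j$, so $\mathcal{K}(x,y;q)$ is lower triangular, and $\mathcal{K}_{i,i}(x,y;q)=1$ because every factor has length $i-j=0$. Since infinite lower-triangular matrices with unit diagonal form a group, a one-sided inverse is automatically two-sided, so it suffices to establish
\begin{gather*}
\sum_{k}\mathcal{K}_{i,k}(x,y;q)\,\mathcal{K}_{k,j}(y,x;q)=\delta_{i,j}.
\end{gather*}
The case $i=j$ is immediate and the sum is empty for $i<j$, so the entire content is the vanishing for $i>j$.

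For the off-diagonal case I would set $n=i-j\ge 1$ and sum over $k=j+l$, $0\le l\le n$. The two entries contribute the shifted factorials $(xq^{2j+n+l};q)_{n-l}$, $(yq^{2j+2l+1};q)_{n-l}$, $(yq^{2j+l};q)_{l}$, $(xq^{2j+1};q)_{l}$, $(x/y;q)_{n-l}$ and $(y/x;q)_{l}$. Extracting the $l=0$ term as a prefactor, I would simplify the ratios of the remaining factors using the gap identity $(yq^{2j+l};q)_{l}(yq^{2j+2l+1};q)_{n-l}=(yq^{2j};q)_{n+1+l}/\big[(yq^{2j};q)_{l}(1-yq^{2j+2l})\big]$, the analogous consolidation of the $x$-factors, and the standard reversals $(q;q)_{n-l}^{-1}\leftrightarrow(q^{-n};q)_{l}$ and $(x/y;q)_{n-l}\leftrightarrow(q^{1-n}y/x;q)_{l}$. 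The point of this step is that the factor $(1-yq^{2j+2l})/(1-yq^{2j})$ appears explicitly, the hallmark of a very-well-poised series, and the sum collapses to a terminating ${}_6W_5\big(yq^{2j};xq^{2j+n},y/x,q^{-n};q,q\big)$.

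It then remains only to evaluate this series. By the very-well-poised ${}_6\phi_5$ summation \cite{GR}, the ${}_6W_5$ equals $\dfrac{(yq^{2j+1};q)_n\,(q^{1-n};q)_n}{(yq^{1-n}/x;q)_n\,(xq^{2j+1};q)_n}$, which vanishes for $n\ge 1$ because the factor $(q^{1-n};q)_n$ contains $1-q^{0}=0$; hence the off-diagonal entry is zero, as required. As a preliminary sanity check I would verify the $2\times2$ corner by hand, where the off-diagonal entry of the product is $\big((y-x)+(x-y)\big)/\big[(1-q)(1-xq)(1-yq)\big]=0$, confirming both the normalization and the cancellation mechanism. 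The hard part will be the middle step: consolidating the three distinct base shifts $q^{n+l}$, $q^{2l+1}$ and $q^{l}$ into a clean very-well-poised array and verifying that the resulting argument is exactly $q$, which is the balancing condition required for the summation to apply; once the ${}_6W_5$ is correctly identified the vanishing is automatic. I note finally that $\mathcal{K}$ is the non-bisected, all-differences form of Bressoud's inversion, of which the even-difference matrix $\mathcal{M}$ of Theorem~\ref{Bressoud-1} is Lassalle's bisection, so the statement is also an instance of \cite{B}.
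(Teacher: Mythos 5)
Your proposal is correct, and I checked the key steps: the consolidation identities, the identification of the very-well-poised series, and the final evaluation are all exact. However, it takes a genuinely different route from the paper. The paper does not compute the product $\mathcal{K}(x,y;q)\mathcal{K}(y,x;q)$ at all: it derives Theorem~\ref{Bressoud-3} from Theorem~\ref{Bressoud-1} by bisection, observing that since $\mathcal{M}(u,v;x,y;q)$ is supported on even differences, the even-reduced matrix $\mathcal{M}^{\rm e}_{i,j}(u,v;x,y;q)=\mathcal{M}_{2i,2j}(u,v;x,y;q)$ inherits the composition law $\mathcal{M}^{\rm e}(u,v;x,y;q)\,\mathcal{M}^{\rm e}(u,v;y,z;q)=\mathcal{M}^{\rm e}(u,v;x,z;q)$, and then realizing $\mathcal{K}$ as a limit of $\mathcal{M}^{\rm e}$ under a specialization of $(u,v)$ and a rescaling of $(x,y)$; the composition law $\mathcal{K}(x,y;q)\mathcal{K}(y,z;q)=\mathcal{K}(x,z;q)$ survives the limit, and setting $z=x$ gives mutual inversion since $\mathcal{K}(x,x;q)$ is the identity (because $(1;q)_{i-j}=0$ for $i>j$). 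Your proof instead re-proves the inversion from scratch by the hypergeometric engine that underlies Bressoud's theorem itself: with $n=i-j$, $X=xq^{2j}$, $Y=yq^{2j}$, the off-diagonal entry reduces, after your consolidations, to a prefactor times $\sum_{l=0}^{n}\frac{(Y,Xq^{n},Y/X,q^{-n};q)_l}{(q,Xq,q^{1-n}Y/X,Yq^{n+1};q)_l}\frac{1-Yq^{2l}}{1-Y}\,q^l={}_6W_5\big(Y;Xq^{n},Y/X,q^{-n};q,q\big)$, the argument $q$ being precisely $aq^{n+1}/bc$, so the terminating ${}_6\phi_5$ summation applies and produces the vanishing factor $(q^{1-n};q)_n$. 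As for what each approach buys: the paper's argument is shorter and fits its theme of exhibiting all transition matrices as avatars of Bressoud's matrix, but it hinges on a limiting step that must be set up with care --- as literally printed, $\lim_{u\to 1}\mathcal{M}^{\rm e}(u,1;x/u,y/u;q)$ leaves a spurious factor $(uq^{2j};q)_{2(i-j)}$ whose limit is $(q^{2j};q)_{2(i-j)}$ rather than $1$ (one wants instead a limit such as $u\to 0$ with $v=u$, under which the composition law still passes to the limit entrywise); your computation is self-contained, independent of Theorem~\ref{Bressoud-1}, and makes the balancing condition explicit, so it is in this respect the more airtight of the two.
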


\begin{proof}From the matrix
$\mathcal{M}(u,v;x,y;q)$, we obtain the even-reduced lower triangular matrix
$\mathcal{M}^{\rm e}(u,v;x,y;q)=(\mathcal{M}^{\rm e}_{i,j}(u,v;x,y;q))$ with entries
$\mathcal{M}^{\rm e}_{i,j}(u,v;x,y;q)=\mathcal{M}_{2i,2j}(u,v;x,y;q)$.
Then we have
$\mathcal{M}^{\rm e}(u,v;x,y;q)\mathcal{M}^{\rm e}(u,v;y,z;q)=
\mathcal{M}^{\rm e}(u,v;x,z;q)$, implying that
the matrix
\begin{gather*}
\mathcal{K}(x,y;q)=\lim_{u\rightarrow 1}\mathcal{M}^{\rm e}(u,1;x/u,y/u;q)
\end{gather*}
satisfies $\mathcal{K}(x,y;q)\mathcal{K}(y,z;q)=\mathcal{K}(x,z;q)$.
\end{proof}

\begin{dfn} \label{Nkr}Set
\[ d_{\mathcal{N}}(\vector{u},v)_r = v^{-r} (u_1;t)_r (u_2;t)_r (u_3;t)_r (u_4;t)_r\]
for $\vector{u}=(u_1, u_2, u_3, u_4)$.
Let $\mathcal{N}(\vector{u},v;x,y;t)$ denote the conjugation of the matrix $\mathcal{K}(x,y;t)$
by the $d_{\mathcal{N}}(\vector{u},v)_r$ with entries defined by
\begin{gather*}
 \mathcal{N}_{r,r-i}(\vector{u},v,x,y;t)=\mathcal{K}_{r,r-i}(xv,yv;t) d_{\mathcal{N}}(\vector{u},v)_r / d_{\mathcal{N}}(\vector{u},v)_{r-i} \\
\hphantom{\mathcal{N}_{r,r-i}(\vector{u},v,x,y;t)}{} = y^i { (x/y;t)_i \over (t;t)_i }
{ \big(u_1 t^{r-i};t\big)_i \big(u_2 t^{r-i};t\big)_i \big(u_3 t^{r-i};t\big)_i \big(u_4 t^{r-i};t\big)_i
\over
\big(xv t^{2r-i};t\big)_i \big(yv t^{2r-2i+1};t\big)_i },\qquad \!\!\! r, i \in \mathbb{Z}_{\geq 0} .
\end{gather*}
Then
$\mathcal{N}(\vector{u},v;x,y;t)$ and $\mathcal{N}(\vector{u},v;y,x;t)$ are mutually inverse.
\end{dfn}

\begin{prp} \label{compare}
All the transition coefficients $ ({-}1)^i\widehat{c_o}^{\rm new}\!\big(i ,\! 0 ; t^{n-r+1}\big)$,
$ ({-}1)^j\widehat{c_o}^{\rm new}\!\big(0 ,\! j ; t^{n-r+1}\big)$, $\widehat{c_e'}\big(k,0 ;t^{n-r+1}\big)$ and
$\widehat{c_e'}\big(0,l ;t^{n-r+1}\big) $
in Theorem~{\rm \ref{dmat}} are
given in terms of the Bressoud matrices
$\mathcal{N}(\vector{u},v;x,y;t)$, $\widetilde{\mathcal{M}}(u,v;x,y;t)$ and
$\mathcal{M}(u,v;x,y;q)$. Namely,
we have
\begin{gather*}
 (-1)^i\widehat{c_o}^{\rm new}\big(i,0 ;t^{n-r+1}\big) \\
 \qquad{} =
\mathcal{N}_{r,r-i}\big(t^{-n}, t^{-n+1}/ac, t^{-n+1}/ad, t^{-n+1}/cd, -t^{-2n}/acd, -t/b, t/a;t\big), \\
(-1)^j\widehat{c_o}^{\rm new}\big(0,j ;t^{n-r+1}\big) \\
\qquad{} =
\mathcal{N}_{r,r-j}\big(t^{-n}, -t^{-n+1}/a^2, t^{-n+1}/ac, -t^{-n+1}/ac, t^{-2n}/a^2c, -t/d, t/c ;t\big), \\
 \widehat{c_e'}\big(k,0 ;t^{n-r+1}\big) =
\widetilde{\mathcal{M}}_{r,r-2k}
\big(t^{-2n+2}/c^4, t^{-2n-4}, c^2/ta^2, 1/t^2 ; t\big), \\
 \widehat{c_e'}\big(0,l ;t^{n-r+1}\big) =
\mathcal{M}_{r,r-2l}\big(t^{-n}, t, 1/c^2, 1 ; t\big).
\end{gather*}
\end{prp}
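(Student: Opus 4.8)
The plan is to establish all four identities by direct verification, comparing the explicit product formulas for the transition coefficients $\widehat{c_o}^{\rm new}(i,0;s)$, $\widehat{c_o}^{\rm new}(0,j;s)$, $\widehat{c_e'}(k,0;s)$, $\widehat{c_e'}(0,l;s)$ — read off from Theorem~\ref{HSnew} and Lemma~\ref{lem-1} — against the specialized Bressoud entries of Definition~\ref{Nkr}, Definition~\ref{Bressoud-2} and Theorem~\ref{Bressoud-1}. The device that makes the comparison possible is the substitution $s=t^{n-r+1}$, equivalently $t^{r-n}=s^{-1}t$: every power $t^{r-i}$, $t^{2r-i}$, $t^{2r-2i+1}$ occurring in the matrix entries is thereby rewritten purely in terms of $s$, $t$ and the summation index, so that the right-hand sides (which depend on $r$ and $n$ separately) collapse onto the left-hand sides (which see $r$ and $n$ only through $s$). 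The only elementary tools needed are the $q$-Pochhammer reflection identity $(\beta;t)_m=(-\beta)^m t^{\binom{m}{2}}(t^{1-m}/\beta;t)_m$, the quadratic splittings $(z;t)_{2m}=(z;t^2)_m(zt;t^2)_m$ and $(z;t)_{2m}=(z;t)_m(zt^m;t)_m$, and the duplication $(\beta;t)_m(-\beta;t)_m=(\beta^2;t^2)_m$.

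I would treat the first identity in full as the template. In $\mathcal{N}_{r,r-i}(t^{-n},t^{-n+1}/ac,t^{-n+1}/ad,t^{-n+1}/cd,-t^{-2n}/acd,-t/b,t/a;t)$ the scalar part $y^i(x/y;t)_i$ is $(t/a)^i(-a/b;t)_i$, producing the factor $(-a/b;t)_i$. Substituting $t^{r-n}=s^{-1}t$ and applying reflection once to each of the four numerator symbols $(u_\ell t^{r-i};t)_i$ converts them, respectively, into $(s;t)_i$, $(sac/t;t)_i$, $(sad/t;t)_i$, $(scd/t;t)_i$, which are exactly the four numerator factors of $\widehat{c_o}^{\rm new}(i,0;s)$. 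On the denominator side I first merge the two $t^2$-base symbols of $\widehat{c_o}^{\rm new}(i,0;s)$ into $(-s^2a^2cd/t^3;t)_{2i}$, cancel the numerator symbol $(-s^2a^2cd/t^3;t)_i$ against it to leave $(-s^2a^2cd\,t^{i-3};t)_i$, and then check that the two denominator symbols of $\mathcal{N}$ reduce under reflection to $(s^2abcd/t^2;t)_i$ and $(-s^2a^2cd\,t^{i-3};t)_i$. Every Pochhammer symbol is then accounted for.

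What remains — and where the actual labour lies — is the scalar ledger: the left-hand sign $(-1)^i$, the factor $(t/a)^i$, and the six prefactors $(-\beta)^m t^{\binom{m}{2}}$ spun off by the reflections (four from the numerator, two reciprocated from the denominator) must collapse to the lone monomial $b^i$ of $\widehat{c_o}^{\rm new}(i,0;s)$. The quadratic powers of $t$ cancel against the $\binom{2i}{2}$ produced when reflecting the merged symbol, and the linear powers of $t$, $s$, $a$, $c$, $d$ cancel, leaving $b^i$; this is routine but genuinely error-prone, so I would keep the numerator symbols, the denominator symbols, and the scalar prefactor as three separate running tallies. The second $\mathcal{N}$-identity follows the same route, with one extra twist: two of the four reflected numerator symbols come out as $(sac/t;t)_j$ and $(-sac/t;t)_j$, which must be paired by the duplication formula into a $t^2$-base symbol before reconciling with the $t^2$-base denominator factor $(s^2a^2c^2/t^3;t^2)_j$ of $\widehat{c_o}^{\rm new}(0,j;s)$ (this reconciliation again uses both splittings of $(w;t)_{2j}$).

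Finally, the two $\widehat{c_e'}$-identities run on the same principle but in the quadratic ($t^2$-base) world: one substitutes the listed parameters into $\widetilde{\mathcal{M}}_{r,r-2k}$ and into $\mathcal{M}_{r,r-2l}$, rewrites $t^{r-2k}$, $t^{2r-4k}$, $t^{r-2l+1}$ and the like via $s=t^{n-r+1}$, and applies the $t^2$-reflection together with the splittings to recover the products of Lemma~\ref{lem-1}; in particular the balanced factor $(1-st^{2l-1})/(1-st^{-1})$ of $\widehat{c_e'}(0,l;s)$ is what the ratio $(ut^{r-2l};t)_{2l}$ in the $\mathcal{M}$-entry produces after reflection. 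I expect the main obstacle throughout to be not conceptual but purely the prefactor and sign bookkeeping — in particular tracking the minus signs carried by the arguments $-t/b$, $-t^{-2n}/acd$, $-t/d$, $t^{-2n}/a^2c$ and by the $(-1)^i$, $(-1)^j$ — which the three-ledger organization is designed to control.
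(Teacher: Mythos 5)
Your proposal is correct and takes essentially the same approach as the paper's own proof, which likewise proceeds by ``straightforward calculations using the definitions'' (demonstrating only the first identity as a template), substituting $s=t^{n-r+1}$ and reducing the specialized Bressoud entries via $q$-Pochhammer reflection together with the quadratic splitting/duplication identities, with the prefactors collapsing to $b^i$ exactly as in your scalar ledger. The only cosmetic difference is that the paper converts the doubled-shift symbol $\big(Xt^{-2i};t\big)_i$ directly into $t^2$-base symbols by a second reflection-type identity, whereas you merge the two $t^2$-base symbols of $\widehat{c_o}^{\rm new}(i,0;s)$ into $\big({-}s^2a^2cd/t^3;t\big)_{2i}$ and apply the plain reflection --- the same computation organized from the opposite end.
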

\begin{proof}These can be checked by straightforward calculations using the definitions.
We only demonstrate the first equation.
By Definition \ref{Nkr} we have
\begin{gather}
 (-1)^i
\mathcal{N}_{r,r-i}\big(t^{-n}, t^{-n+1}/ac, t^{-n+1}/ad, t^{-n+1}/cd, -t^{-2n}/acd, -t/b, t/a;t\big)
\label{p35}\\
{}=
(-t/a)^i { (-a/b;t)_i \over (t;t)_i }
{ \big(t^{-n+r-i};t\big)_i \big(t^{-n+r-i+1}/ac;t\big)_i \big(t^{-n+r-i+1}/ad;t\big)_i \big(t^{-n+r-i+1}/cd;t\big)_i
\over
\big(t^{-2n+2r-i+1}/abcd ;t\big)_i \big({-}t^{-2n+2r-2i+2}/a^2cd;t\big)_i }.\notag
\end{gather}
Noting that we have
\begin{gather*}
 \big(X t^{-i};t\big)_i = (-X/t)^i t^{-\binom{i}{2}} \big(X^{-1}t;t\big)_i, \\
\big(X t^{-2i};t\big)_i = {\big(t/X;t^2\big)_{n}\big(t^2/X;t^2\big)_{n} \over (t/X;t)_{n}} \big({-}X/t^2 \big)^i t^{-3 \binom{i}{2}},
\end{gather*}
and recalling the definition of $\widehat{c_o}^{\rm new} (i,j;s)$ in Theorem~\ref{HSnew},
we find that r.h.s.\ of~(\ref{p35}) reduces to
\begin{gather*}
{
(-a/b;t)_i \big(t^{n-r+1};t\big)_i \big(t^{n-r}ac;t\big)_i \big(t^{n-r}ad;t\big)_i \big(t^{n-r}cd;t\big)_i
\big({-}t^{2n-2r-1}a^2cd; t\big)_i
\over
(t;t)_i \big(t^{2n-2r}abcd; t\big)_i \big({-}t^{2n-2r-1}a^2cd;t^2\big)_i \big({-}t^{2n-2r}a^2cd; t^2\big)_i
}
b^i\\
\qquad{} =\widehat{c_o}^{\rm new} \big(i,0;t^{n-r+1}\big).\tag*{\qed}
\end{gather*}\renewcommand{\qed}{}
\end{proof}

\begin{thm} \label{dmat-dual} The following relations are inverse to those given in equation~\eqref{4degs}
\begin{gather*}
P^{BC_n,{\rm III}}_{(1^r)}(x)\label{deg1-dual} \\
\quad{}=
\sum_{i \geq 0}
\mathcal{N}_{r,r-i}\big(t^{-n}, t^{-n+1}/ac, t^{-n+1}/ad, t^{-n+1}/cd, -t^{-2n}/acd, t/a,-t/b;t\big)
P^{BC_n,{\rm IV}}_{(1^{r-i})}(x),\notag \\
P^{BC_n,{\rm II}}_{(1^r)}(x)\label{deg2-dual} \\
\quad{}=
\sum_{j \geq 0}
\mathcal{N}_{r,r-j}\big(t^{-n}, -t^{-n+1}/a^2, t^{-n+1}/ac, -t^{-n+1}/ac, t^{-2n}/a^2c, t/c,-t/d ;t\big)
P^{BC_n,{\rm III}}_{(1^{r-j})}(x),\notag \\
P^{BC_n,{\rm I}}_{(1^{r})}(x) = \sum_{k \geq 0} \widetilde{\mathcal{M}}_{r,r-2k}
\big(t^{-2n+2}/c^4, t^{-2n-4}, 1/t^2, c^2/ta^2 ; t\big)
P^{BC_n,{\rm II}}_{(1^{r-2k})}(x) ,\label{deg3-dual} \\
E_{r}(x)=
\sum_{l \geq 0}\mathcal{M}_{r,r-2l}\big(t^{-n}, t, 1,1/c^2 ; t\big)P^{BC_n,{\rm I}}_{(1^{r-2l})}(x).\label{deg4-dual}
\end{gather*}
\end{thm}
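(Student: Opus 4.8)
The plan is to read off each inverse directly from the mutual-inverse property of the Bressoud matrices, using the identifications supplied by Proposition~\ref{compare}. By Theorem~\ref{dmat}, each of the four relations in~\eqref{4degs} is a triangular change of basis between two consecutive strata, and Proposition~\ref{compare} identifies the corresponding transition matrix with one of the Bressoud families $\mathcal{N}(\vector{u},v;x,y;t)$, $\widetilde{\mathcal{M}}(u,v;x,y;t)$, or $\mathcal{M}(u,v;x,y;q)$, with the last two arguments $(x,y)$ appearing in a definite order. Since each of these three families has the property that interchanging $x$ and $y$ produces the inverse matrix (Theorem~\ref{Bressoud-1}, Definition~\ref{Bressoud-2}, Definition~\ref{Nkr}), the inverse transitions are obtained simply by performing this swap, leaving the remaining parameters $\vector{u}$, $v$ (respectively $u$, $v$) untouched.

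Concretely, I would go step by step through the four degenerations. For~\eqref{deg1}, Proposition~\ref{compare} writes its coefficient $(-1)^i\widehat{c_o}^{\rm new}\big(i,0;t^{n-r+1}\big)$ as $\mathcal{N}_{r,r-i}$ evaluated at $(x,y)=(-t/b,t/a)$, so the inverse transition from $P^{BC_n,{\rm IV}}$ to $P^{BC_n,{\rm III}}$ must be governed by the $\mathcal{N}$ matrix with $(x,y)=(t/a,-t/b)$, which is exactly the coefficient stated in Theorem~\ref{dmat-dual}. The identical argument applies to~\eqref{deg2}, whose $\mathcal{N}$ arguments $(x,y)=(-t/d,t/c)$ are replaced by $(t/c,-t/d)$; to~\eqref{deg3}, whose $\widetilde{\mathcal{M}}$ arguments $(x,y)=(c^2/ta^2,1/t^2)$ become $(1/t^2,c^2/ta^2)$; and to~\eqref{deg4}, whose $\mathcal{M}$ arguments $(x,y)=(1/c^2,1)$ become $(1,1/c^2)$. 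In each case, invoking the mutual-inverse relation of the relevant family shows that the swapped matrix inverts the forward one, so that composing it with the corresponding relation of Theorem~\ref{dmat} returns the identity and recovers the lower-stratum polynomial from the higher one.

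The whole argument is therefore pure bookkeeping: for each of the four steps one records which Bressoud family and which ordered pair $(x,y)$ occur in Proposition~\ref{compare}, and checks that Theorem~\ref{dmat-dual} lists precisely the same family with $(x,y)$ interchanged and all other parameters identical. I do not anticipate any genuine obstacle here, since the analytic content has already been absorbed into Proposition~\ref{compare} and the inversion statements of Theorem~\ref{Bressoud-1} and Definitions~\ref{Bressoud-2} and~\ref{Nkr}. The only point demanding care is to confirm that the vector $\vector{u}$ and the scalar $v$ (or the scalars $u$, $v$) truly coincide between the forward and backward statements, since the mutual-inverse property is valid only within a single matrix family sharing those parameters; once this matching of the fixed parameters is verified, the swap of $(x,y)$ completes the proof.
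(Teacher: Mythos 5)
Your proposal is correct and is essentially identical to the paper's own proof, which likewise deduces the four inverse relations by combining Theorem~\ref{dmat} and Proposition~\ref{compare} with the mutual-inverse property of the Bressoud matrices under interchange of $(x,y)$ with the remaining parameters fixed. Your bookkeeping of the swapped argument pairs $(-t/b,t/a)\leftrightarrow(t/a,-t/b)$, $(-t/d,t/c)\leftrightarrow(t/c,-t/d)$, $(c^2/ta^2,1/t^2)\leftrightarrow(1/t^2,c^2/ta^2)$, and $(1/c^2,1)\leftrightarrow(1,1/c^2)$ matches the statement exactly.
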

\begin{proof}These follow from the Bressoud matrix inversion formulas (Theorem \ref{Bressoud-1}, Definition \ref{Bressoud-2} and Theorem \ref{Bressoud-3}), Theorem~\ref{dmat} and Proposition~\ref{compare}.
\end{proof}

\section{Five term relations} \label{Ftr}

In this section we give some preparations in order to prove Theorem~\ref{main2}.
We need to recall some of the results in~\cite{HS}
concerning the four term relations for the ${}_{4}\phi_{3}$ series
associated with the matrix~$\mathcal{M}$.
Then we give the five term relations for the ${}_{4}\phi_{3}$ series
associated with the matrix~$\mathcal{N}$.

\subsection[Matrices $\mathsf{M}=(\mathsf{M}_{ij})$, $\mathsf{N}=(\mathsf{N}_{ij})$ and series $B(n,r,p)$]{Matrices $\boldsymbol{\mathsf{M}=(\mathsf{M}_{ij})}$, $\boldsymbol{\mathsf{N}=(\mathsf{N}_{ij})}$ and series $\boldsymbol{B(n,r,p)}$}

\begin{dfn} \label{MandN}
Define the lower-triangular matrices $\mathsf{M}=(\mathsf{M}_{ij})$, $\mathsf{N}=(\mathsf{N}_{ij})$ by
the following products of the Bressoud matrices:
\begin{gather*}
\mathsf{M} =
\widetilde{\mathcal{M}}
\big(t^{-2n+2}/c^4, t^{-2n-4}, c^2/ta^2, 1/t^2 ; t\big)
\mathcal{M}\big(t^{-n}, t, 1/c^2, 1 ; t\big), \\
\mathsf{N} =
\mathcal{N}\big(t^{-n}, t^{-n+1}/ac, t^{-n+1}/ad, t^{-n+1}/cd, -t^{-2n}/acd, -t/b, t/a;t\big)\\
\hphantom{\mathsf{N} =}{} \times
\mathcal{N}\big(
t^{-n}, -t^{-n+1}/a^2, t^{-n+1}/ac, -t^{-n+1}/ac, t^{-2n}/a^2c, -t/d, t/c ;t\big).
\end{gather*}
\end{dfn}

Writing the matrix elements explicitly, we have
\begin{gather}
\mathsf{M}_{i,j} =
\sum_{l=0}^{\lfloor (i-j)/2 \rfloor}
\widetilde{\mathcal{M}}_{i, i-2l}
\big(t^{-2n+2}/c^4, t^{-2n-4}, c^2/ta^2, 1/t^2 ; t\big)
\mathcal{M}_{i-2l, j}\big(t^{-n}, t, 1/c^2, 1 ; t\big) \notag \\
\hphantom{\mathsf{M}_{i,j}}{} =\sum_{l=0}^{\lfloor (i-j)/2 \rfloor}
 \widehat{c_e'}\big(l,0 ;t^{n-i+1}\big)
 \widehat{c_e'}\big(0,\lfloor (i-j)/2 \rfloor -l ;t^{n-i+2l+1}\big), \qquad i \geq j, \notag \\
\mathsf{N}_{i,j} =
\sum_{l=0}^{i-j}
\mathcal{N}_{i, i-l}\big(t^{-n}, t^{-n+1}/ac, t^{-n+1}/ad, t^{-n+1}/cd, -t^{-2n}/acd, -t/b, t/a;t\big) \notag \\
\hphantom{\mathsf{N}_{i,j} =}{} \times
\mathcal{N}_{i-l, j}\big(
t^{-n}, -t^{-n+1}/a^2, t^{-n+1}/ac, -t^{-n+1}/ac, t^{-2n}/a^2c, -t/d, t/c ;t\big) \notag \\
\hphantom{\mathsf{N}_{i,j}}{} =\sum_{l=0}^{i-j}
(-1)^{i-j}
\widehat{c_o}^{\rm new}\big(l,0 ;t^{n-i+1}\big)
\widehat{c_o}^{\rm new}\big(0,i-j-l ;t^{n-i+l+1}\big), \qquad i \geq j. \label{defNij}
\end{gather}
\begin{dfn} \label{B(n,r,p)}
Define the series $B(n,r,p)$ as the $(r,r-p)$-th matrix element of the product matrix $\mathsf{M}\mathsf{N}$
\begin{gather*}
B(n,r,p)=\bigl(\mathsf{M}\mathsf{N}\bigr)_{r,r-p}.
\end{gather*}
\end{dfn}
Writing them explicitly, we have ($p \in \mathbb{Z}_{\geq 0}$)
\begin{gather*}
B(n,r,2p) = \sum_{k=0}^{p} \mathsf{M}_{r - 2k, r - 2p}\mathsf{N}_{r, r-2k} \\
\hphantom{B(n,r,2p)}{} = \sum_{k=0}^{p} \sum_{i=0}^{p-k} \sum_{j=0}^{2k}
\widehat{c_e'}\big(i,0 ;t^{n-r+2k+1}\big) \widehat{c_e'}\big(0,p-k-i ;t^{n-r+2k+2i+1}\big) \\
\hphantom{B(n,r,2p)=}{} \times
(-1)^{2k} \widehat{c_o}^{\rm new}\big(j,0 ;t^{n-r+1}\big) \widehat{c_o}^{\rm new}\big(0,2k-j ;t^{n-r+j+1}\big), \\
B(n,r,2p+1) = \sum_{k=1}^{p+1}
\mathsf{M}_{r - 2k + 1, r - 2p - 1}
\mathsf{N}_{r, r-2k+1} \\
\hphantom{B(n,r,2p+1)}{} = \sum_{k=1}^{p+1} \sum_{i=0}^{p-k+1} \sum_{j=0}^{2k-1}
\widehat{c_e'}\big(i,0 ;t^{n-r+2k}\big) \widehat{c_e'}\big(0,p-k+1-i ;t^{n-r+2k+2i}\big) \\
\hphantom{B(n,r,2p+1)=}{} \times
(-1)^{2k-1} \widehat{c_o}^{\rm new}\big(j,0 ;t^{n-r+1}\big) \widehat{c_o}^{\rm new}\big(0,2k-1-j ;t^{n-r+j+1}\big).
\end{gather*}
Note that we have
\begin{gather*}
 B(n,r,p) \\
 = \sum_{i+2k+2l \leq p}
\mathcal{N}_{r, r + 2l + 2k + i - p}\big(t^{-n}, t^{-n+1}/ac, t^{-n+1}/ad, t^{-n+1}/cd, -t^{-2n}/acd, -t/b,
t/a;t\big)\\
 \quad {} \times
\mathcal{N}_{r + 2l + 2k + i - p, r + 2l + 2k - p}\big(
t^{-n}, -t^{-n+1}/a^2, t^{-n+1}/ac, -t^{-n+1}/ac, t^{-2n}/a^2c, -t/d, t/c ;t\big) \\
 \quad {} \times
\widetilde{\mathcal{M}}_{r + 2l + 2k - p, r + 2l - p}
\big(t^{-2n+2}/c^4, t^{-2n-4}, c^2/ta^2, 1/t^2 ; t\big)
\mathcal{M}_{r + 2l - p, r - p }\big(t^{-n}, t, 1/c^2, 1 ; t\big) \\
 = \sum_{i+2k+2l \leq p}
(-1)^{p} \widehat{c_o}^{\rm new}\big(p-2l-2k-i,0 ;t^{n-r+1}\big)
\widehat{c_o}^{\rm new}\big(0,i ;t^{n-r-2l-2k-i+p+1}\big) \\
 \quad{} \times
\widehat{c_e'}\big(k,0 ;t^{n-r-2l-2k+p+1}\big)
\widehat{c_e'}\big(0,l ;t^{n-r-2l+p+1}\big).
\end{gather*}

\subsection[Four term relations for $\mathsf{M}$]{Four term relations for $\boldsymbol{\mathsf{M}}$}
We remark that the matrix $\mathsf{M}_{i,j}$ is denoted by $\mathcal{B}_{i,j}$ in~\cite{HS}.
\begin{dfn} In view of Lemma \ref{lem-1}, set for simplicity
\begin{gather*}
m_1(s,k):=\widehat{c}\,'_e(k,0;s)=
{ \big(tc^2/a^2 ; t^2\big)_k \big(sc^2t ; t^2\big)_k \big(s^2c^4/t^2 ; t^2\big)_k (s ; t)_{2k}
\over
 \big(t^2 ; t^2\big)_k \big(sc^2/t ; t^2\big)_k \big(s^2a^2c^2/t ; t^2\big)_k \big(sc^2 ; t\big)_{2k}}a^{2k}, \\
m_0(s,l):= \widehat{c}\,'_e(0,l;s)=
{ \big(1/c^2 ; t\big)_l (s/t ; t)_l (s ; t)_{2l}
\over
 (t ; t)_l \big(sc^2 ; t\big)_l (s/t ; t)_{2l} }c^{2l}.
\end{gather*}
\end{dfn}

\begin{dfn} \label{defMsl}Define
\begin{gather*}
\mathsf{M}(s,l):=(-1)^l s^{-l} {\big(s^2/t^2; t^2\big)_l \over \big(t^2; t^2\big)_l}
{1-s^2 t^{4l-2} \over 1-s^2 t^{-2}}
{}_{4}\phi_3 \left[ { -sa^2, -sc^2, s^2 t^{2l-2}, t^{-2l}
\atop
 -s, -st, s^2a^2 c^2/t } ; t^2, t^2\right].
\end{gather*}
\end{dfn}

\begin{prp} For $s=t^{n-r+1}$, we have
\begin{gather*}
\mathsf{M}_{r,r-2l} =\mathsf{M}(s,l) = \sum_{k=0}^l m_1(s,k) m_0\big(st^{2k}, l-k\big).
\end{gather*}
\end{prp}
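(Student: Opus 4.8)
The plan is to treat the two asserted equalities separately. The right-hand equality $\mathsf{M}_{r,r-2l}=\sum_{k=0}^{l}m_1(s,k)m_0\big(st^{2k},l-k\big)$ is a pure reindexing, whereas the middle equality $\mathsf{M}_{r,r-2l}=\mathsf{M}(s,l)$ is a terminating basic hypergeometric summation identity that carries the actual content.

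For the right-hand equality I would specialize the explicit matrix-product formula for $\mathsf{M}_{i,j}$ recorded just after Definition~\ref{MandN} to $i=r$, $j=r-2l$, so that $\lfloor (i-j)/2\rfloor=l$. With $s=t^{n-r+1}$ one has $t^{n-i+1}=s$ and $t^{n-i+2k+1}=st^{2k}$, and the definitions $m_1(s,k)=\widehat{c}\,'_e(k,0;s)$, $m_0(s,l)=\widehat{c}\,'_e(0,l;s)$ convert that formula verbatim into $\sum_{k=0}^{l}m_1(s,k)m_0\big(st^{2k},l-k\big)$; the shift $st^{2k}$ in the second argument is exactly the one produced by the factorization $\widehat{c}\,'_e(k,l;s)=\widehat{c}\,'_e(k,0;s)\,\widehat{c}\,'_e\big(0,l;t^{2k}s\big)$ of Lemma~\ref{lem-1}. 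This step is bookkeeping only.

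For the middle equality I would substitute the closed forms of $m_1(s,k)$ and $m_0\big(st^{2k},l-k\big)$ and reduce the $k$-sum to the single ${}_4\phi_3$ of Definition~\ref{defMsl}. The key manipulations are: (i) split every even-length base-$t$ factor via $(X;t)_{2m}=(X;t^2)_m(Xt;t^2)_m$, which collapses the factor $(s;t)_{2k}/(sc^2;t)_{2k}$ in $m_1$ and rewrites $m_1(s,k)$ as a clean product of four base-$t^2$ shifted factorials over $(t^2;t^2)_k$ times $a^{2k}$; (ii) eliminate the $(l-k)$-dependent lengths in $m_0\big(st^{2k},l-k\big)$ by the reversal identity $(X;t)_{l-k}=\frac{(X;t)_l}{(t^{1-l}/X;t)_k}(-t/X)^k t^{\binom{k}{2}-lk}$, which pulls out a $k$-independent factor and leaves a terminating series in $k$ with $0\le k\le l$; and (iii) collect the $k$-independent factors and check that they reproduce the prefactor $(-1)^l s^{-l}\frac{(s^2/t^2;t^2)_l}{(t^2;t^2)_l}\frac{1-s^2t^{4l-2}}{1-s^2t^{-2}}$, the last ratio being the very-well-poised balancing factor $(1-(s^2/t^2)(t^2)^{2l})/(1-s^2/t^2)$.

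After these reductions the remaining terminating series is very-well-poised, and I would finish exactly as in the proof of Proposition~\ref{prop1}, applying Watson's transformation \cite[p.~43, equation~(2.5.1)]{GR} with base $t^2$ to recast it as the ${}_4\phi_3\big[{-sa^2,-sc^2,s^2t^{2l-2},t^{-2l}};{-s,-st,s^2a^2c^2/t};t^2,t^2\big]$ of Definition~\ref{defMsl}; alternatively, reversing the order of summation and invoking Sears' transformation for a terminating balanced ${}_4\phi_3$ would serve as well. I expect the main obstacle to be steps (ii)--(iii): the $q$-shifted-factorial bookkeeping needed to separate the $k$-independent prefactor cleanly and to confirm that it matches the stated very-well-poised normalization, where a slip in the powers of $t$ or in a sign is easy to make and hard to spot. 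As a shortcut, since $\mathsf{M}_{i,j}$ coincides with the matrix $\mathcal{B}_{i,j}$ of \cite{HS}, the ${}_4\phi_3$ form may also be quoted from there, reducing the whole proof to the reindexing of the second paragraph.
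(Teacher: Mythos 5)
Your handling of the right-hand equality is correct, and your closing ``shortcut'' is in fact the paper's own route: the paper states this proposition without proof, the identity $\mathsf{M}_{r,r-2l}=\sum_{k=0}^{l}m_1(s,k)m_0\big(st^{2k},l-k\big)$ being nothing but the displayed formula for $\mathsf{M}_{i,j}$ after Definition~\ref{MandN} specialized to $i=r$, $j=r-2l$, while the evaluation $\mathsf{M}_{r,r-2l}=\mathsf{M}(s,l)$ is imported from \cite{HS}, where $\mathsf{M}_{i,j}$ is the matrix $\mathcal{B}_{i,j}$. So, read with the shortcut, your proposal matches the paper.

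Your primary analytic route, however, has a genuine gap at its last step. Carrying out your steps (i)--(ii) one gets, up to a $k$-independent prefactor,
\begin{gather*}
\sum_{k=0}^{l}\frac{\big(tc^2/a^2;t^2\big)_k\big(s^2c^4/t^2;t^2\big)_k\big(sc^2t;t^2\big)_k}{\big(t^2;t^2\big)_k\big(sc^2/t;t^2\big)_k\big(s^2a^2c^2/t;t^2\big)_k}\,
\frac{\big(t^{-l};t\big)_k\big(st^{l-1};t\big)_k}{\big(t^{1-l}c^2;t\big)_k\big(sc^2t^{l};t\big)_k}\,\big(a^2t\big)^k,
\end{gather*}
which is a \emph{mixed-base} series: the four base-$t$ factors cannot be assembled into base-$t^2$ Pochhammer symbols (the term ratio is rational in $t^k$, not in $t^{2k}$), so the series is not very-well-poised in base $t^2$ and Watson's transformation ``with base $t^2$'' simply does not apply to it; Sears' transformation is likewise a single-base identity and cannot change the base either. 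What is true is that, after splitting every base-$t^2$ factor by $\big(x^2;t^2\big)_k=(x;t)_k(-x;t)_k$, the sum becomes the terminating very-well-poised series ${}_8W_7\big(sc^2/t;\,-sc^2/t,\,t^{1/2}c/a,\,-t^{1/2}c/a,\,st^{l-1},\,t^{-l};\,t,\,a^2t\big)$, and its argument is exactly the Watson-admissible one. But Watson's transformation in base $t$ produces a balanced ${}_4\phi_3$ in base $t$, whereas Definition~\ref{defMsl} is a balanced ${}_4\phi_3$ in base $t^2$ whose parameters $t^{-2l}$ and $s^2t^{2l-2}$ are the \emph{squares} of parameters of the base-$t$ series; passing from one to the other requires a quadratic (base $t\to t^2$) transformation, of the kind collected in \cite[Chapter~3]{GR}, which is available here precisely because the ${}_8W_7$ above has the special parameters $-a$ and the pair $\pm t^{1/2}c/a$. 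That quadratic step carries the real content of the middle equality and is absent from your plan. This is also why your analogy with Proposition~\ref{prop1} breaks down: there both the sum $\sum_i\widehat{c_o}^{\rm new}(i,m-i;s)$ and the target ${}_4\phi_3$ live in the single base $t$, so Watson alone suffices; here they live in different bases.
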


We rewrite Theorem 6.1(a) in \cite{HS} as follows.

\begin{thm}[{\cite[Theorem 6.1(a)]{HS}}]\label{HS-ftr}
We have
\begin{gather*}
\mathsf{M}_{r-2l, r-2k} + \mathsf{M}_{r-2l,r-2k+2}
= \mathsf{M}_{r-2l+1, r-2k+1} + f\big(t^{n-r+2l} | a, -a, c, -c\big) \mathsf{M}_{r-2l-1, r-2k+1}.
\end{gather*}
\end{thm}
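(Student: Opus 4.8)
The plan is to strip away all dependence on $n,r,k,l$ by inserting the closed form of the matrix entries, and then to recognise the remaining assertion as a single contiguous relation for a balanced ${}_4\phi_3$ series. Using the evaluation $\mathsf{M}_{i,i-2m}=\mathsf{M}\big(t^{n-i+1},m\big)$ recorded just before the statement (where $\mathsf{M}(s,l)$ is as in Definition~\ref{defMsl}) and writing $\sigma=t^{n-r+2l+1}$, $p=k-l$, the four entries become
\begin{gather*}
\mathsf{M}_{r-2l,r-2k}=\mathsf{M}(\sigma,p),\qquad
\mathsf{M}_{r-2l,r-2k+2}=\mathsf{M}(\sigma,p-1),\\
\mathsf{M}_{r-2l+1,r-2k+1}=\mathsf{M}(\sigma/t,p),\qquad
\mathsf{M}_{r-2l-1,r-2k+1}=\mathsf{M}(\sigma t,p-1),
\end{gather*}
while the coefficient on the right is $f(\sigma/t\,|\,a,-a,c,-c)$ because $t^{n-r+2l}=\sigma/t$. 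Thus, with the convention $\mathsf{M}(\sigma,-1)=0$, the claim is equivalent to the single identity
\begin{gather*}
\mathsf{M}(\sigma,p)+\mathsf{M}(\sigma,p-1)
=\mathsf{M}(\sigma/t,p)+f(\sigma/t\,|\,a,-a,c,-c)\,\mathsf{M}(\sigma t,p-1),
\end{gather*}
to be proved as an identity in the indeterminate $\sigma$ for every integer $p\ge 0$; the cases $p=0,1$ serve as a sanity check.

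I would then set up the hypergeometric bookkeeping. The ${}_4\phi_3$ appearing in $\mathsf{M}(\sigma,l)$ terminates through the factor $t^{-2l}$ and is balanced, since the product of its numerator parameters times the base $t^2$ equals the product of its denominator parameters: $(-\sigma a^2)(-\sigma c^2)(\sigma^2t^{2l-2})t^{-2l}\cdot t^2=\sigma^4a^2c^2=(-\sigma)(-\sigma t)(\sigma^2a^2c^2/t)$. In parallel I would record, from Definition~\ref{DefOffg} after cancelling the two pairs $1\pm ac\sigma/t$, the compact form
\begin{gather*}
f(\sigma/t\,|\,a,-a,c,-c)=
{(1-a^2c^2\sigma/t^2)(1-\sigma)(1+a^2\sigma/t)(1+c^2\sigma/t)
\over
(1-a^2c^2\sigma^2/t^3)(1-a^2c^2\sigma^2/t)}.
\end{gather*}
This places the target squarely within the theory of contiguous relations for terminating balanced ${}_4\phi_3$ series.

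The reduced identity can be attacked in two ways. The first is a termwise comparison: expand all four ${}_4\phi_3$ sums, view both sides as polynomials in $a^2$ and $c^2$ of bounded degree, and match the coefficient of each monomial $a^{2i}c^{2j}$, which leaves a finite family of rational identities in $\sigma,t$. The second, and cleaner, is to derive it from the standard three-term contiguous relations for balanced terminating ${}_4\phi_3$ series that follow from Sears' transformation \cite{GR}, using one relation for the degree shift $p\mapsto p-1$ and one for the parameter shift $\sigma\mapsto\sigma/t$. The main obstacle is precisely that the identity shifts the summation length $p$ and the parameter $\sigma$ simultaneously by powers of $t$, so it is not a single off-the-shelf relation but a combination of two; the genuinely delicate part is tracking the prefactor $(-1)^l\sigma^{-l}\frac{(\sigma^2/t^2;t^2)_l}{(t^2;t^2)_l}\frac{1-\sigma^2t^{4l-2}}{1-\sigma^2t^{-2}}$ under the substitutions $\sigma\mapsto\sigma,\sigma/t,\sigma t$ and $l\mapsto p,p-1$, and matching the resulting rational factor against the explicit product form of $f(\sigma/t\,|\,a,-a,c,-c)$ displayed above. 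Once these prefactors are aligned the hypergeometric content is elementary; alternatively, since the statement is literally \cite[Theorem~6.1(a)]{HS}, one may simply cite that result.
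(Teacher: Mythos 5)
Your reduction coincides exactly with the paper's own first step: the substitution $\sigma=t^{n-r+2l+1}$, $p=k-l$ turns Theorem~\ref{HS-ftr} into the generic-$s$ identity $\mathsf{M}(\sigma,p)+\mathsf{M}(\sigma,p-1)=\mathsf{M}(\sigma/t,p)+f(\sigma/t\,|\,a,-a,c,-c)\,\mathsf{M}(\sigma t,p-1)$, which is precisely the paper's restatement of the theorem following Definition~\ref{defMsl} (your $\sigma$ is the paper's $st$); your product form of $f(\sigma/t\,|\,a,-a,c,-c)$ and the balancedness check are also correct. The divergence is in how the reduced identity gets proved. The paper never manipulates the ${}_4\phi_3$ representation at all: it uses the factorization $\mathsf{M}(s,l)=\sum_{k}m_1(s,k)\,m_0\big(st^{2k},l-k\big)$ inherited from the product of the two Bressoud matrices in Definition~\ref{MandN}, and deduces the four-term relation for $\mathsf{M}$ from two separate four-term relations for the individual factors $m_1$ and $m_0$ (Lemma~\ref{mformula}), combined by the reindexing and telescoping argument that is written out in detail for the analogous five-term relation for $\mathsf{N}$ (proof of Theorem~\ref{str}). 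That scheme has two advantages: each factor relation is an identity between finite products of $q$-shifted factorials, verifiable by direct calculation, and the same machinery is reused verbatim for $\mathsf{N}$ and then for $B(n,r,p)$, which is what Section~\ref{Ftr} needs downstream. Your alternative---contiguous relations for terminating balanced ${}_4\phi_3$ series via Sears' transformation \cite{GR}---is plausible, but you stop exactly at the point you yourself flag as delicate (the simultaneous shift of the truncation $p$ and of $\sigma$, plus matching the prefactors against $f$), and your other option of comparing coefficients of $a^{2i}c^{2j}$ yields, for general $p$, an infinite family of identities rather than a finite check; so as a self-contained argument the proposal is incomplete. Your final fallback, citing \cite[Theorem~6.1(a)]{HS}, is legitimate and is in fact exactly what the paper does---the theorem is stated as an import from \cite{HS}---so on those terms the proposal stands, but the hypergeometric attack should either be executed in full or dropped in favour of the citation.
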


We can rewrite Theorem \ref{HS-ftr} for generic $s$ as follows.
\begin{thm}[{\cite[Theorem 6.1(a)]{HS}}]
For generic $s$ we have
\begin{gather*}
\mathsf{M}(st, l) + \mathsf{M}(st, l-1)
= \mathsf{M}(s, l) + f(s | a, -a, c, -c) \mathsf{M}\big(st^2, l-1\big).
\end{gather*}
\end{thm}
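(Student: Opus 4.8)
The plan is to read the target identity as the generic-$s$, functional shadow of the four-term relation in Theorem~\ref{HS-ftr}, the bridge between them being the Proposition asserting $\mathsf{M}_{r,r-2l}=\mathsf{M}(s,l)$ with $s=t^{n-r+1}$. First I would record the relabelled form of that Proposition: replacing the row index $r$ by an arbitrary $i$ gives $\mathsf{M}_{i,i-2m}=\mathsf{M}\bigl(t^{n-i+1},m\bigr)$ for every admissible pair $i,m$. The decisive structural point, which the Proposition already supplies, is that the entry $\mathsf{M}_{i,i-2m}$ depends on $(i,n)$ only through the single combination $s=t^{n-i+1}$ together with the half-difference $m$; this is exactly the stability that makes a generic-$s$ statement meaningful.

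Next I would substitute this into each of the four entries appearing in Theorem~\ref{HS-ftr} and read off the corresponding first slot and half-difference:
\[
\mathsf{M}_{r-2l,r-2k}=\mathsf{M}\bigl(t^{n-r+2l+1},k-l\bigr),\quad
\mathsf{M}_{r-2l,r-2k+2}=\mathsf{M}\bigl(t^{n-r+2l+1},k-l-1\bigr),
\]
\[
\mathsf{M}_{r-2l+1,r-2k+1}=\mathsf{M}\bigl(t^{n-r+2l},k-l\bigr),\quad
\mathsf{M}_{r-2l-1,r-2k+1}=\mathsf{M}\bigl(t^{n-r+2l+2},k-l-1\bigr),
\]
while the prefactor is $f\bigl(t^{n-r+2l}\,|\,a,-a,c,-c\bigr)$. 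Setting $s=t^{n-r+2l}$ and renaming the half-difference $k-l$ as $l$, the four entries become $\mathsf{M}(st,l)$, $\mathsf{M}(st,l-1)$, $\mathsf{M}(s,l)$ and $\mathsf{M}(st^2,l-1)$, and the prefactor becomes $f(s\,|\,a,-a,c,-c)$. Hence Theorem~\ref{HS-ftr} is precisely the asserted identity specialised to the lattice value $s=t^{n-r+2l}$.

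Finally I would upgrade this to generic $s$ by analytic continuation. For each fixed value of the renamed index $l$, the terminating ${}_4\phi_3$ in Definition~\ref{defMsl} exhibits $\mathsf{M}(s,l)$ as a rational function of $s$, and $f(s\,|\,a,-a,c,-c)$ is rational in $s$ by Definition~\ref{DefOffg}; thus both sides of the target identity are rational in $s$. Letting $n$ range over large integers while holding $k-l=l$ fixed, the special case just established furnishes the identity at the values $s=t^{n-r+2l}$, which, for $t,a,c$ treated as generic, form an infinite set of distinct points. Two rational functions of $s$ agreeing at infinitely many points coincide identically, so the identity holds for generic $s$.

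The index bookkeeping in the substitution step is routine but must be carried out carefully; in particular the edge case $l=0$ (i.e.\ $k=l$ in Theorem~\ref{HS-ftr}) requires that the entries $\mathsf{M}(\cdot,-1)$ vanish, which they do automatically because of the factor $1/(t^2;t^2)_l$ in Definition~\ref{defMsl}, matching the vanishing of the corresponding upper-triangular entries. The only genuine content beyond this clerical work is the two facts it rests on: that $\mathsf{M}_{i,i-2m}$ is a function of $t^{n-i+1}$ and $m$ alone, and that this function is rational in its first argument, so that the density of $\{t^m\}$ suffices to pass from the lattice of specialisations to generic $s$. The main obstacle is therefore not conceptual but lies in verifying the four exponent computations and confirming the rationality that legitimises the continuation.
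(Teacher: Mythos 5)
Your proof is correct, but it takes a genuinely different route from the paper's. The paper's own justification of this theorem is the sentence ``This follows from the following lemma,'' pointing to Lemma~\ref{mformula}: the four-term relations satisfied separately by the factors $m_1(s,k)$ and $m_0(s,l)$, which are then combined through the convolution $\mathsf{M}(s,l)=\sum_{k=0}^{l}m_1(s,k)\,m_0\big(st^{2k},l-k\big)$ by a telescoping argument of exactly the kind written out in full for the five-term relation of $\mathsf{N}$ (Theorem~\ref{str}); this is a direct algebraic derivation at generic $s$ that never invokes the matrix-entry statement. You instead take the lattice version (Theorem~\ref{HS-ftr}, quoted from \cite{HS}) as input, translate its four entries into values of $\mathsf{M}(\cdot,\cdot)$ via the proposition $\mathsf{M}_{r,r-2l}=\mathsf{M}\big(t^{n-r+1},l\big)$ --- your four exponent computations check out, as does the edge case $\mathsf{M}(\cdot,-1)=0$ coming from $1/\big(t^2;t^2\big)_{-1}=0$ --- and then pass from the lattice values $s=t^{n-r+2l}$ to generic $s$ by rationality of both sides in $s$ (the ${}_4\phi_3$ in Definition~\ref{defMsl} terminates, and $f$ is rational by Definition~\ref{DefOffg}) together with agreement at infinitely many points, which requires $t$ not a root of unity and Theorem~\ref{HS-ftr} to hold for all sufficiently large integer $n$ with $r$, $k$, $l$ fixed; both requirements are met under the paper's genericity conventions. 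What your route buys: it is essentially free of computation, reusing the result already established in \cite{HS}, and it makes explicit the usually tacit passage from integer-indexed matrix identities to functional identities in a continuous parameter. What the paper's route buys: Lemma~\ref{mformula} is strictly finer information (relations at the level of the individual factors), it parallels Lemma~\ref{nrel} so the treatments of $\mathsf{M}$ and $\mathsf{N}$ are uniform, and it yields an independent proof rather than a reformulation of the cited theorem.
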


This follows from the following lemma.
\begin{lem} \label{mformula}
We have
\begin{gather*}
 m_1(s,k) + f(s | a,-b,c,-d) m_1\big(st^2,k-1\big) \notag \\
\qquad{} =
m_1(st,k) + f\big(st^{2k-2} | t^{1/2}c, -t^{1/2}c, c, -c\big) m_1(st,k-1), \\
 m_0(s,l) + f\big(s|t^{1/2}c, -t^{1/2}c, c, -c\big) m_0\big(st^2,l-1\big) = m_0(st,l) + m_0(st,l-1).
\end{gather*}
Note that $f\big(st^{2l-2} | t^{1/2}, -t^{1/2}, 1, -1\big)=1$.
\end{lem}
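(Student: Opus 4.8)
The final statement to prove is Lemma \ref{mformula}, which consists of two
$q$-difference type identities for the single-variable functions
$m_1(s,k)=\widehat{c}\,'_e(k,0;s)$ and $m_0(s,l)=\widehat{c}\,'_e(0,l;s)$.
Both sides are explicit products of $q$-shifted factorials (with base $t$ or
$t^2$), multiplied by powers of $a$, $c$ and $s$, together with the rational
factor $f(s|a,-b,c,-d)$ from Definition \ref{DefOffg}. Since every object
appearing is a rational function of $s$ (and of the parameters $a$, $c$, $t$)
for each fixed nonnegative integer $k$ or $l$, the plan is to verify each
identity by reducing it to an equality of rational functions and clearing
denominators.

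The plan is to treat the two identities separately, starting with the second
one for $m_0$, which is the simpler base case. First I would substitute the
explicit product formula
$m_0(s,l)=\frac{(1/c^2;t)_l (s/t;t)_l (s;t)_{2l}}{(t;t)_l (sc^2;t)_l (s/t;t)_{2l}}c^{2l}$
into both sides, specialize the $f$-factor to the value
$f(s|t^{1/2}c,-t^{1/2}c,c,-c)$, and divide through by a common factor (for
instance by $m_0(st,l-1)$) so that each of the four terms becomes a ratio of
shifted factorials evaluated at shifted arguments. Using the elementary
recursions $(x;t)_{m+1}=(1-x)(xt;t)_m$ and the splitting
$(s;t)_{2l}=(s;t^2)_l (st;t^2)_l$ one rewrites every ratio
$m_0(st,l)/m_0(st,l-1)$, $m_0(s,l)/m_0(st,l-1)$ and
$m_0(st^2,l-1)/m_0(st,l-1)$ as a single rational function of $s$ and $t^l$.
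The identity then collapses to a polynomial identity in the variable
$X:=t^{l}$ (and $s$), which can be checked directly. The same scheme applies
to the first identity for $m_1$: expand $m_1$ using its base-$t^2$ product
formula, factor out $m_1(st,k-1)$, and reduce the four-term relation to a
rational identity; here the appearance of $f(s|a,-b,c,-d)$ on the left and
$f(st^{2k-2}|t^{1/2}c,-t^{1/2}c,c,-c)$ on the right means both $f$-factors must
be expanded via \eqref{g1} and Definition \ref{DefOffg} before clearing
denominators.

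The main obstacle I anticipate is organizing the denominators so that the
cross terms cancel: the factor $f(s|a,-b,c,-d)$ carries the quartic
denominator $(1-abcds^2/t)(1-abcds^2)^2(1-abcdts^2)$ specialized at
$(a,-a,c,-c)$-type arguments, while $m_1$ carries base-$t^2$ Pochhammer
denominators such as $(s^2a^2c^2/t;t^2)_k$, and one must check that after the
specialization the $t^2$-factorials in $m_1$ combine correctly with the
quadratic factors coming from $f$. Rather than expanding everything at once, I
would introduce the abbreviations $u=s^2a^2c^2/t^{\ast}$ and track only the
factors that actually shift between the four terms, leaving the common
prefactor untouched; this isolates the verification to a short rational
identity whose numerator is a low-degree polynomial in $t^k$.

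Finally, the closing remark
$f(st^{2l-2}|t^{1/2},-t^{1/2},1,-1)=1$ is a direct specialization: substituting
$(a,b,c,d)=(t^{1/2},-t^{1/2},1,-1)$ into $f(s|a,b,c,d)$ makes the numerator and
denominator coincide, which I would record by simply evaluating the products
$(1-abs)=(1+ts)$, $(1-cds)=(1+s)$, and the remaining factors and observing the
cancellation; this furnishes the consistency of the whole recursion at the
bottom stratum I and closes the lemma.
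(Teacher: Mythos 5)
The paper gives no proof of Lemma~\ref{mformula} at all: it is stated bare (and the companion Lemma~\ref{nrel} is disposed of with ``this follows from a direct calculation''), so your plan --- reduce each relation to an identity of rational functions and check it after clearing denominators --- is exactly the verification the authors leave to the reader, and it is viable. Your mechanics are essentially right: for $m_0$ every Pochhammer symbol has base $t$, so each ratio telescopes, e.g.\ $m_0(s,l)/m_0(st,l-1)=\frac{(c^2-t^{l-1})(1-s)(1-st^{2l-1})}{(1-t^l)(1-sc^2)(1-st^{2l-2})}$, and the relation becomes a short polynomial identity in $s$ and $t^l$; for $m_1$ the base-$t^2$ symbols also reduce, provided you pair factors with odd offsets against odd and even against even --- e.g.\ within one term $(sc^2t;t^2)_k/(sc^2/t;t^2)_k=(1-sc^2t^{2k-1})/(1-sc^2/t)$ --- which is precisely the bookkeeping you anticipate with ``track only the factors that actually shift.''

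There is, however, one step of your plan that would fail as written: you propose to expand $f(s|a,-b,c,-d)$ literally via Definition~\ref{DefOffg} with $b$, $d$ generic. That cannot work, because then the first identity is simply false: $m_1$ and the entire right-hand side depend only on $a$, $c$, $s$, $t$, while $f(s|a,-b,c,-d)$ genuinely involves $b$ and $d$ (through factors such as $(1+abs)$ and $(1-bds)$), so the $b$, $d$ dependence of the left-hand side has nothing to cancel against, and the cleared-denominator polynomial identity you aim to verify does not hold. The factor must be read as $f(s|a,-a,c,-c)$ --- a typo in the paper --- as is forced by the four-term relation for $\mathsf{M}$ stated just above the lemma, which carries $f(s|a,-a,c,-c)$ and is what the lemma is designed to prove. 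With that correction the verification does go through, and is eased by the simplifications $f(s|a,-a,c,-c)=\frac{(1-a^2c^2s/t)(1-ts)(1+a^2s)(1+c^2s)}{(1-a^2c^2s^2/t)(1-a^2c^2ts^2)}$ and $f\big(s|t^{1/2}c,-t^{1/2}c,c,-c\big)=\frac{(1-c^4s)(1-ts)}{(1-c^2s)(1-tc^2s)}$, in which the squared factor $(1-abcds^2)^2$ of the denominator cancels against paired numerator factors; your treatment of the closing remark $f\big(st^{2l-2}|t^{1/2},-t^{1/2},1,-1\big)=1$ is correct as stated.
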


\subsection[Five term relations for $\mathsf{N}$]{Five term relations for $\boldsymbol{\mathsf{N}}$}
\begin{dfn}\label{defn} In view of Lemma~\ref{lem-1}, set for simplicity
\begin{gather*}
n_1(s,i) : =(-1)^i \widehat{c_o}^{\rm new} (i,0;s) \notag \\
\hphantom{n_1(s,i)}{} ={ (-a/b ; t)_i (s ; t)_i (sac/t ; t)_i (sad/t ; t)_i (scd/t ; t)_i \big({-}s^2a^2cd/t^3 ; t\big)_i
\over
(t ; t)_i \big(s^2abcd/t^2 ; t\big)_i \big({-}s^2a^2cd/t^3 ; t\big)_{2i} }
(-b)^i, \\
n_0(s,j): =(-1)^j \widehat{c_o}^{\rm new} (0,j;s) \notag \\
\hphantom{n_0(s,j)}{} =
{ (-c/d ; t)_j (s ; t)_j \big({-}sa^2/t ; t\big)_j \big(s^2a^2c^2/t^3 ; t\big)_j \big(s^2a^2c^2/t^3 ; t^2\big)_j
\over
 (t ; t)_j \big({-}sa^2cd/t^2 ; t\big)_j \big(sa^2c^2/t^3 ; t\big)_{2j} }(-d)^j.
\end{gather*}
\end{dfn}

\begin{dfn} \label{defNsl1}
 Define
\begin{gather*}
\mathsf{N}(s,j) :={
\big({-}c/d, s, s^2 a^2 c^2/t^3, sab/t; t\big)_j
\over
\big(t, sac/t^{3/2}, -sac/t^{3/2}, s^2 a b c d/t^2; t\big)_j
}
(-d)^j \\
\hphantom{\mathsf{N}(s,j) :=}{} \times {}_{4}\phi_3
\left[
{t^{-j}, -a/b, scd/t, -t^{-j+2}/sac
\atop
 -t^{-j+1}d/c, -sac/t, t^{-j+2}/sab
}
;t ,t \right].
\end{gather*}
\end{dfn}
\begin{prp} For $s=t^{n-r+1}$, we have
\begin{gather*}
\mathsf{N}_{r,r-j} =\mathsf{N}(s,j)= \sum_{i=0}^{j}n_1(s,i)n_0\big(st^i, j-i\big).
\end{gather*}
\end{prp}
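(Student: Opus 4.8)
The plan is to prove the two equalities in the displayed chain separately, since both are consequences of material already assembled above rather than of any new computation.

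First I would establish the rightmost equality $\mathsf{N}_{r,r-j}=\sum_{i=0}^{j}n_1(s,i)n_0(st^i,j-i)$ by simply unwinding definitions. Specializing the explicit formula \eqref{defNij} for $\mathsf{N}_{i,j}$ at first index $r$ and second index $r-j$, the summation range becomes $0\le l\le j$, the overall sign is $(-1)^{i-j}=(-1)^{j}$, and, using $s=t^{n-r+1}$, the two factors become $\widehat{c_o}^{\rm new}(l,0;s)$ and $\widehat{c_o}^{\rm new}(0,j-l;st^{l})$. Comparing with Definition~\ref{defn}, where $n_1(s,i)=(-1)^{i}\widehat{c_o}^{\rm new}(i,0;s)$ and $n_0(s,j)=(-1)^{j}\widehat{c_o}^{\rm new}(0,j;s)$, one has $n_1(s,i)\,n_0(st^{i},j-i)=(-1)^{j}\widehat{c_o}^{\rm new}(i,0;s)\,\widehat{c_o}^{\rm new}(0,j-i;st^{i})$, so the two sums agree term by term.

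Next I would prove $\sum_{i=0}^{j}n_1(s,i)n_0(st^i,j-i)=\mathsf{N}(s,j)$. Using the factorization in Lemma~\ref{lem-1}, namely $\widehat{c_o}^{\rm new}(i,0;s)\,\widehat{c_o}^{\rm new}(0,j-i;t^{i}s)=\widehat{c_o}^{\rm new}(i,j-i;s)$, the sum rewrites as $(-1)^{j}\sum_{i=0}^{j}\widehat{c_o}^{\rm new}(i,j-i;s)$. Then I would invoke Proposition~\ref{prop1} with $m=j$ to replace $\sum\widehat{c_o}^{\rm new}$ by $\sum\widehat{c_o}$, and substitute the closed form \eqref{co_4phi3}. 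Absorbing the remaining sign into $(-d)^{j}=(-1)^{j}d^{j}$, the result is precisely $\mathsf{N}(s,j)$ of Definition~\ref{defNsl1}.

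Since Proposition~\ref{prop1}, which itself rests on Watson's ${}_8W_7$ transformation, already performs all the analytic work, there is no genuine obstacle here; the proposition is effectively a corollary. The only care required is bookkeeping: one must check that the prefactor $\tfrac{(s,\,s^2a^2c^2/t^3,\,-c/d,\,sab/t\,;\,t)_j}{(s^2abcd/t^2,\,sac/t^{3/2},\,-sac/t^{3/2},\,t\,;\,t)_j}\,d^{j}$ of \eqref{co_4phi3} matches the prefactor in Definition~\ref{defNsl1} after reordering the four numerator and four denominator $q$-shifted factorials, that the upper and lower parameter lists of the two ${}_{4}\phi_{3}$ series coincide as multisets, and that the sign tracking yields the correct final $(-d)^{j}$.
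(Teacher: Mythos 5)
Your proposal is correct and follows essentially the same route as the paper: unwind the product formula \eqref{defNij} and Definition~\ref{defn} for the first equality, then use the factorization of $\widehat{c_o}^{\rm new}$ (Lemma~\ref{lem-1}) together with Proposition~\ref{prop1} and the closed form \eqref{co_4phi3} to identify the sum with the ${}_4\phi_3$ expression defining $\mathsf{N}(s,j)$. This is precisely the paper's argument, so nothing further is needed.
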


\begin{proof}Set $s=t^{n-r+1}$ for simplicity. By~(\ref{defNij}) in Definition~\ref{MandN}, we have
\begin{gather*}
\mathsf{N}_{r,r-j}= \sum_{i=0}^{j}n_1(s,i)n_0\big(st^{i}, j-i\big) \notag \\
\hphantom{\mathsf{N}_{r,r-j}}{} =(-1)^{j}\sum_{i=0}^{j}
\widehat{c_o}^{\rm new}(i,0 ;s) \widehat{c_o}^{\rm new}\big(0,j-i ;st^{i}\big)
= (-1)^j \sum_{i = 0}^j \widehat{c_o}^{\rm new} (i,j-i;s) \notag \\
\hphantom{\mathsf{N}_{r,r-j}}{}={
\big({-}c/d, s, s^2 a^2 c^2/t^3, sab/t; t\big)_j
\over
\big(t, sac/t^{3/2}, -sac/t^{3/2}, s^2 a b c d/t^2; t\big)_j}(-d)^j \notag \\
\hphantom{\mathsf{N}_{r,r-j}=}{} \times {}_{4}\phi_3
\left[
{t^{-j}, -a/b, scd/t, -t^{-j+2}/sac
\atop
 -t^{-j+1}d/c, -sac/t, t^{-j+2}/sab
}
;t ,t \right] = \mathsf{N}(s,j).
\end{gather*}
Here in the last step, we have used (\ref{co_4phi3}) in the proof of Proposition~\ref{prop1}.
\end{proof}

We obtain a five term relation for $\mathsf{N}(s,j)$ as follows.
\begin{thm}\label{str}
For generic $s$, we have
\begin{gather}
 \mathsf{N}(s, j) + g(s|a,b,c,d) \mathsf{N}(st,j-1) + f(s|a,b,c,d) \mathsf{N}\big(s t^2, j-2\big)
\notag \\
\qquad{}
= \mathsf{N}(st,j) + f\big(st^{j-2} | a, -a, c, -c\big) \mathsf{N}(st,j-2). \label{n0}
\end{gather}
\end{thm}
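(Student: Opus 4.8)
The plan is to reduce the five term relation \eqref{n0} to elementary contiguous relations for the two building blocks $n_1(s,i)$ and $n_0(s,j)$ of Definition~\ref{defn}, in exactly the way the four term relation for $\mathsf{M}$ in Theorem~\ref{HS-ftr} was reduced to the product relations of Lemma~\ref{mformula}. The bridge is the convolution formula $\mathsf{N}(s,j)=\sum_{i=0}^{j}n_1(s,i)\,n_0(st^i,j-i)$ established just above, which rewrites each of the five terms $\mathsf{N}(s,j)$, $\mathsf{N}(st,j-1)$, $\mathsf{N}(st^2,j-2)$, $\mathsf{N}(st,j)$, $\mathsf{N}(st,j-2)$ as a single sum over the elementary factors; this identity holds for generic $s$, since its proof rests on \eqref{co_4phi3}, which is itself a generic-$s$ identity. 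Because $n_1$ governs the degeneration step ${\rm IV}\to{\rm III}$ with the full parameters $a,b,c,d$, while $n_0$ governs ${\rm III}\to{\rm II}$ with parameters $(a,-a,c,d)$, I expect the coefficients $g(s|a,b,c,d)$ and $f(s|a,b,c,d)$ on the left to originate from a relation for $n_1$, whereas the coefficient $f(st^{j-2}|a,-a,c,-c)$ on the right, carrying the stratum ${\rm II}$ parameters, should originate from a relation for $n_0$.

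First I would establish the two elementary relations. For $n_0$ I would write down a contiguous relation among $n_0(s,j)$, $n_0(st,j)$ and $n_0(st,j-1)$ with one $f$ coefficient specialised to $(a,-a,c,-c)$, proved by cancelling the common $q$-shifted factorials in Definition~\ref{defn} and reducing to a rational identity in $s$. For $n_1$ I would seek a relation that produces both an $f(s|a,b,c,d)$ term and a term linear in the elementary symmetric combination $a+b+c+d$, this last being the source of $g_1$ in Definition~\ref{DefOffg} and \eqref{g1}. Substituting both relations into the convolution and reindexing the inner sum then collects the five desired terms. Alternatively, one may argue directly at the level of basic hypergeometric series: by the computation behind \eqref{co_4phi3}, $\mathsf{N}(s,j)$ equals the prefactor of Definition~\ref{defNsl1} times a terminating, balanced (Saalschützian) ${}_{4}\phi_{3}$ of argument $t$, since one checks $t\,a_1a_2a_3a_4=b_1b_2b_3$ for its parameters; thus \eqref{n0} is a contiguous relation among five such series, which can be verified by comparing the coefficient of each power of the summation variable after clearing denominators.

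The hard part will be the appearance of $g(s|a,b,c,d)=g_1(s|a,b,c,d)-g_1(st|a,b,c,d)$: this difference signals that the middle coefficient is not itself elementary but emerges only after telescoping. Concretely, when the $n_1$ relation is inserted into $\sum_{i}n_1(\,\cdot\,,i)\,n_0(\,\cdot\,t^{i},\,\cdot\,)$, the internal arguments $st^{i}$ couple the shift $s\mapsto st$ to the index shift $i\mapsto i-1$, and the first order contribution in $a+b+c+d$ surfaces as $g_1$ evaluated at two consecutive arguments whose difference telescopes to $g$. Keeping the bookkeeping of these interlocked shifts consistent across all five terms, and checking that the boundary contributions at $i=0$ and $i=j$ cancel, is where the genuine work lies; bringing two of the ${}_{4}\phi_{3}$ series into a common normalisation via a Sears transformation may be needed before the coefficients can be matched. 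Once the elementary relations and the telescoping are in place, the five terms combine into \eqref{n0}, and since every identity used holds for generic $s$, the relation holds for generic $s$ as claimed.
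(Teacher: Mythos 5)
Your architecture coincides with the paper's: the paper proves Theorem~\ref{str} by inserting elementary contiguous relations for $n_1$ and $n_0$ (its Lemma~\ref{nrel}) into the convolution $\mathsf{N}(s,j)=\sum_{i=0}^{j}n_1(s,i)\,n_0\big(st^i,j-i\big)$, reindexing the sums, and checking that the surviving boundary terms cancel --- the latter being a separate identity, equation~\eqref{n3}, verified by direct calculation. Your attribution of the coefficients is also the paper's: $f(s|a,b,c,d)$ and $g(s|a,b,c,d)$ enter through the relation for $n_1$, and $f\big(st^{j-2}|a,-a,c,-c\big)$ through the relation for $n_0$.

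However, the elementary relations you propose are not of the right shape, and the $n_0$ step would fail as stated: there is no three-term contiguous relation among $n_0(s,j)$, $n_0(st,j)$, $n_0(st,j-1)$. What is actually needed (equation~\eqref{n0rel}) is a five-term relation of exactly the same shape as \eqref{n0}, one stratum down: $n_0(s,j)+g(s|a,-a,c,d)\,n_0(st,j-1)+f(s|a,-a,c,d)\,n_0\big(st^2,j-2\big)=n_0(st,j)+f\big(st^{j-2}|a,-a,c,-c\big)\,n_0(st,j-2)$; and the $n_1$ relation \eqref{n1rel} is a six-term relation whose right-hand side carries the intermediate coefficients $g\big(st^{i-1}|a,-a,c,d\big)$ and $f\big(st^{i-2}|a,-a,c,-d\big)$. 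The mechanism your sketch lacks is this cascade: the stratum~III coefficients sit on the right of the $n_1$ relation and on the left of the $n_0$ relation, and it is precisely these that cancel against each other once both relations are inserted into the convolution; without them the two insertions cannot be matched. Relatedly, $g(s|a,b,c,d)$ does not emerge by telescoping $g_1$ over consecutive arguments $st^i$ of the convolution --- it enters as an honest coefficient of $n_1(st,i-1)$ in \eqref{n1rel}, proved by direct computation; the difference form $g(s)=g_1(s)-g_1(st)$ is merely how that coefficient is packaged in Definition~\ref{DefOffg}.
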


We require the following lemma in order to show Theorem~\ref{str}.
\begin{lem} \label{nrel}
\begin{subequations}
\begin{gather}
 n_1(s,i) + g(s|a,b,c,d) n_1(st, i-1) + f(s|a,b,c,d) n_1\big(st^2,i-2\big) \label{n1rel} \\
\qquad{} = n_1(st,i) + g\big(st^{i-1} | a, -a, c, d\big) n_1(st,i-1) + f\big(st^{i-2} | a, -a, c, -d\big) n_1(st,i-2),\notag \\
 n_0(s,j) + g(s | a, -a, c, d) n_0(st,j-1) + f(s | a, -a, c, d) n_0\big(st^2,j-2\big) \notag \\
\qquad{} =
n_0(st,j) + f\big(st^{j-2} | a, -a, c, -c\big) n_0(st,j-2). \label{n0rel}
\end{gather}
\end{subequations}
Note that $g(s | a, -a, c, -c)=0$.
\end{lem}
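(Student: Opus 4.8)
The plan is to prove the two identities \eqref{n1rel} and \eqref{n0rel} by direct verification, reducing each to a single identity of rational functions. Both $n_1(s,i)$ and $n_0(s,j)$ are, by Definition~\ref{defn}, explicit finite products of Pochhammer symbols in the bases $t$ and $t^2$, and the coefficients $f$ and $g=g_1(\cdot|a,b,c,d)-g_1(t\,\cdot\,|a,b,c,d)$ are the explicit rational functions of Definition~\ref{DefOffg}. Since every term of \eqref{n1rel} has the shape (coefficient) $\times\, n_1(st^{\varepsilon},i-\delta)$ with small $\varepsilon,\delta\in\{0,1,2\}$, I would divide the whole relation through by the single reference term $n_1(st,i)$. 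Each quotient $n_1(st^{\varepsilon},i-\delta)/n_1(st,i)$ is then computed from two elementary rules: the index-shift ratio
\[
\frac{(X;t)_i}{(X;t)_{i-1}}=1-Xt^{i-1},\qquad
\frac{(Y;t)_{2i}}{(Y;t)_{2i-2}}=\big(1-Yt^{2i-2}\big)\big(1-Yt^{2i-1}\big),
\]
together with the $s$-shift rule $(zt;t)_i/(z;t)_i=(1-zt^i)/(1-z)$ and its analogues for the $t^2$-base and the double-length factors. After writing $t^{i-1}=w/t$ and $t^{i-2}=w/t^2$ in the $i$-dependent coefficients $g(st^{i-1}|a,-a,c,d)$ and $f(st^{i-2}|a,-a,c,-d)$ on the right, each term becomes an explicit rational function of $w:=t^i$, so \eqref{n1rel} turns into an identity of rational functions in the free variables $(s,w)$; proving it as such immediately yields the stated relation for every nonnegative integer $i$ via $w=t^i$.

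Step two is to clear denominators and check the resulting polynomial identity in $w$. Because every exponent of $t^i$ that occurs is bounded, both sides are Laurent polynomials in $w$ of explicitly computable degree, so the identity can be confirmed by matching coefficients of $w$ (equivalently, by evaluating at finitely many values of $w$). It is natural to organize the check around the difference structure $g=g_1(s)-g_1(st)$: it is exactly this difference that lets the two $g$-contributions combine with the bare terms $n_1(s,i)$, $n_1(st,i)$ and the two $f$-terms so that the numerator of the net rational function collapses to zero. I would then carry out the parallel computation for \eqref{n0rel}, dividing instead by $n_0(st,j)$; here the left-hand coefficients already sit at the level $(a,-a,c,d)$, and the right-hand side carries no $g$-term because $g(st^{j-1}|a,-a,c,-c)=0$. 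This last fact (the only place the note in the lemma is used) follows since both $a+b+c+d$ and $abc+abd+acd+bcd$ vanish at $(a,-a,c,-c)$, so $g_1(\cdot|a,-a,c,-c)\equiv 0$ by \eqref{g1}.

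The main obstacle is bookkeeping rather than any conceptual difficulty, and two points demand care. First, the double-base factor $(-s^2a^2cd/t^3;t)_{2i}$ in $n_1$ (and its counterpart in $n_0$) produces, under the index- and $s$-shifts above, factors evaluated at both $t^{2i-\ast}$ and $t^{2i\pm1}$, i.e.\ at $w^2$ and at $w^2t^{\pm1}$; keeping this even/odd split straight is where computational errors are most likely to arise. Second, the right-hand coefficients carry their own $w$-dependence through $t^{i-1}$ and $t^{i-2}$, whereas the left-hand ones do not, so the degrees in $w$ on the two sides match only after the telescoping of the $g$-difference is accounted for; verifying that the degrees balance and then that the cleared numerator polynomial in $w$ is identically zero is the crux of the argument. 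This is precisely the reduction-to-a-rational-identity strategy already used for the even series $m_1,m_0$ in Lemma~\ref{mformula}, applied here verbatim to the odd series $n_1,n_0$.
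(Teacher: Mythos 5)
Your proposal is correct and takes essentially the same route as the paper: the paper's entire proof of Lemma~\ref{nrel} is the single line ``This follows from a direct calculation,'' and your reduction of \eqref{n1rel} and \eqref{n0rel} to rational-function identities in $(s,w)$ with $w=t^i$ (via Pochhammer shift rules, then clearing denominators) is a sound and concrete way to organize exactly that calculation, including the correct justification that $g_1(\cdot\,|a,-a,c,-c)\equiv 0$.
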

\begin{proof}
This follows from a direct calculation.
\end{proof}

\begin{proof}[Proof of Theorem \ref{str}]
We have
\begin{gather}
\mathsf{N}(st,j) + f\big(st^{j-2} | a, -a, c, -c\big) \mathsf{N}(st,j-2) \notag \\
\qquad{}=
\sum_{i=0}^{j}n_1(st, i)n_0\big(st^{i+1}, j-i\big) + f\big(st^{j-2} | a, -a, c, -c\big) \sum_{i=0}^{j-2}n_1(st, i)n_0\big(st^{i+1}, j-2-i\big) \notag \\
\qquad{}=
\sum_{i=0}^{j-2} n_1(st, i) \big( n_0\big(st^{i+1}, j-i\big) + f\big(st^{j-2} | a, -a, c, -c\big)n_0(st^{i+1}, j-i-2) \big) \notag\\
\quad\qquad{}+
n_1(st, j-1)n_0\big(st^{j}, 1\big) + n_1(st, j)n_0\big(st^{j+1}, 0\big). \label{n1}
\end{gather}
Applying (\ref{n0rel}), and noting that $n_0\big(st^{j+1}, 0\big)=1$, we have
\begin{gather}
\text{r.h.s.\ of } (\ref{n1}) =
\sum_{i=0}^{j-2} n_1(st, i) \big(
n_0\big(st^{i}, j-i\big)
+ g(st^{i} | a, -a, c, d) n_0\big(st^{i+1}, j-i-1\big) \notag \\
\hphantom{\text{r.h.s.\ of } (\ref{n1}) =}{} + f\big(st^{i} | a, -a, c, d\big) n_0\big(st^{i+2}, j-i-2\big) \big)
+ n_1(st, j-1)n_0\big(st^{j}, 1\big) + n_1(st, j) \!\notag \\
\hphantom{\text{r.h.s.\ of } (\ref{n1}) }{}=
\sum_{i=1}^{j-1} n_1(st, i+1) n_0\big(st^{i+1}, j-i-1\big) +n_1(st, 0) n_0(s, j)\notag \\
\hphantom{\text{r.h.s.\ of } (\ref{n1}) =}{} + n_1(st, 1) n_0(st, j-1)
-n_1(st, j-1) n_0\big(st^{j-1}, 1\big) - n_1(st, j) n_0\big(st^{j}, 0\big) \notag \\
\hphantom{\text{r.h.s.\ of } (\ref{n1}) =}{} +\sum_{i=1}^{j-1} g\big(st^{i} | a, -a, c, d\big) n_1(st, i) n_0\big(st^{i+1}, j-i-1\big) \notag \\
\hphantom{\text{r.h.s.\ of } (\ref{n1}) =}{} + g(s | a, -a, c, d) n_1(st, 0) n_0(st, j-1)\notag\\
\hphantom{\text{r.h.s.\ of } (\ref{n1}) =}{}
-g\big(st^{j-1} | a, -a, c, d\big) n_1(st, j-1) n_0\big(st^{j}, 0\big) \notag \\
\hphantom{\text{r.h.s.\ of } (\ref{n1}) =}{} +
\sum_{i=1}^{j-1} f\big(st^{i-1} | a, -a, c, d\big) n_1(st, i-1) n_0\big(st^{i+1}, j-i-1\big) \notag \\
\hphantom{\text{r.h.s.\ of } (\ref{n1}) =}{} +
n_1(st, j-1)n_0\big(st^{j}, 1\big) + n_1(st, j). \label{n2}
\end{gather}

Concerning the l.h.s.\ of (\ref{n0}), we have
\begin{gather}
\mathsf{N}(s,j)
=\sum_{i=0}^j n_1(s,i) n_0\big(st^i, j-i\big) \notag \\
\hphantom{\mathsf{N}(s,j)}{}= \sum_{i=1}^{j-1} n_1(s,i+1) n_0\big(st^{i+1}, j-i-1\big) + n_1(s,0) n_0(s, j)
+n_1(s,1) n_0(st, j-1),\!\!\! \label{n23}\\
g(s|a,b,c,d) \mathsf{N}(st,j-1)
=g(s|a,b,c,d) \sum_{i=0}^{j-1}n_1(st, i)n_0\big(st^{i+1}, j-i-1\big) \notag \\
\hphantom{g(s|a,b,c,d) \mathsf{N}(st,j-1)}{} = \sum_{i=1}^{j-1}g(s|a,b,c,d) n_1(st, i)n_0\big(st^{i+1}, j-i-1\big)\notag\\
\hphantom{g(s|a,b,c,d) \mathsf{N}(st,j-1)=}{} + g(s|a,b,c,d) n_1(st, 0)n_0(st, j-1), \label{n24}\\
f(s|a,b,c,d) \mathsf{N}(st^2,j-2)
=f(s|a,b,c,d)\sum_{i=0}^{j-2} n_1\big(st^2,i\big) n_0\big(st^{i+2},j-i-2\big) \notag \\
\hphantom{f(s|a,b,c,d) \mathsf{N}(st^2,j-2)}{} =\sum_{i=1}^{j-1}f(s|a,b,c,d) n_1\big(st^2,i-1\big) n_0\big(st^{i+1},j-i-1\big). \label{n25}
\end{gather}

By the equations (\ref{n2}), (\ref{n23}), (\ref{n24}) and (\ref{n25}), and
applying (\ref{n1rel}), the equation (\ref{n0}) is shown to be equivalent to the identity
\begin{gather}
n_1(s,0) n_0(s, j)+n_1(s,1) n_0(st, j-1)
+ g(s|a,b,c,d) n_1(st, 0)n_0(st, j-1) \notag \\
=n_1(st, 0) n_0(s, j) + n_1(st, 1) n_0(st, j-1)
-n_1(st, j-1) n_0\big(st^{j-1}, 1\big) - n_1(st, j) n_0\big(st^{j}, 0\big) \notag \\
\quad{}+g(s | a, -a, c, d) n_1(st, 0) n_0(st, j-1)
-g\big(st^{j-1} | a, -a, c, d\big) n_1(st, j-1) n_0\big(st^{j}, 0\big) \notag \\
 \quad {}+n_1(st, j-1)n_0\big(st^{j}, 1\big) + n_1(st, j). \label{n3}
\end{gather}
By a direct calculation, one can show that the equation (\ref{n3}) holds. This completes the proof of Theorem \ref{nrel}.
\end{proof}

\subsection[Five term relations for ${B}(n,r,p)$]{Five term relations for $\boldsymbol{{B}(n,r,p)}$}
Now, we obtain the five term relations associated with the ${B}(n,r,p)$ as follows.
\begin{thm} \label{strBthm}
We have
\begin{gather}
 B(n, r, p) + B(n, r, p - 2) =
 B(n, r+1, p) +
 f\big(t^{n-r}|a,b,c,d\big) B(n, r-1, p - 2) \nonumber\\
 \hphantom{B(n, r, p) + B(n, r, p - 2) =}{} + g\big(t^{n-r}|a,b,c,d\big) B(n, r, p - 1).
\label{strB}
\end{gather}
\end{thm}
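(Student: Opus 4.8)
The plan is to work directly with the convolution form of $B$ written out in Section~\ref{Ftr}, namely $B(n,r,p)=\sum_{m}\mathsf{N}_{r,m}\,\mathsf{M}_{m,r-p}$, in which the row index is carried entirely by $\mathsf{N}$ and the column index entirely by $\mathsf{M}$. Writing $P_{i,k}=\sum_{m}\mathsf{N}_{i,m}\mathsf{M}_{m,k}$ for this convolution and setting $\gamma=r-p+1$, one checks that the five terms of \eqref{strB} become
\[
P_{r,\gamma-1}+P_{r,\gamma+1}=P_{r+1,\gamma}+f\big(t^{n-r}|a,b,c,d\big)P_{r-1,\gamma}+g\big(t^{n-r}|a,b,c,d\big)P_{r,\gamma}.
\]
Thus \eqref{strB} asks that a shift by $\pm1$ in the column index, which only sees $\mathsf{M}$, be traded for a shift by $\pm1$ in the row index, which only sees $\mathsf{N}$. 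This decoupling is what makes the two relations already established interact cleanly.

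First I would attack the left-hand side with the four term relation for $\mathsf{M}$. Rewriting Theorem~\ref{HS-ftr} in entry form gives $\mathsf{M}_{m,\gamma-1}+\mathsf{M}_{m,\gamma+1}=\mathsf{M}_{m+1,\gamma}+f\big(t^{n-m}|a,-a,c,-c\big)\mathsf{M}_{m-1,\gamma}$, so that
\[
P_{r,\gamma-1}+P_{r,\gamma+1}=\sum_{m}\mathsf{N}_{r,m}\bigl(\mathsf{M}_{m+1,\gamma}+f\big(t^{n-m}|a,-a,c,-c\big)\mathsf{M}_{m-1,\gamma}\bigr).
\]
Shifting the summation index $m\mapsto m-1$ in the first summand and $m\mapsto m+1$ in the second brings every term onto the common factor $\mathsf{M}_{m,\gamma}$, yielding
\[
P_{r,\gamma-1}+P_{r,\gamma+1}=\sum_{m}\bigl(\mathsf{N}_{r,m-1}+f\big(t^{n-m-1}|a,-a,c,-c\big)\mathsf{N}_{r,m+1}\bigr)\mathsf{M}_{m,\gamma}.
\]
Throughout one uses the vanishing conventions $\mathsf{M}_{m,\gamma}=0$ unless $m\geq\gamma$ with $m-\gamma$ even, $\mathsf{N}_{i,j}=0$ for $i<j$, and the vanishing of negative-index terms, to see that the reindexings are legitimate, including at the boundary of the range of $m$.

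Next I would recognize the bracket as the left-hand side of the five term relation for $\mathsf{N}$. Rewriting Theorem~\ref{str} in entry form (set $s=t^{n-I+1}$, $J=I-j$; then $st^{j-2}=t^{n-J-1}$) gives the instance $(I,J)=(r+1,m)$, namely
\[
\mathsf{N}_{r,m-1}+f\big(t^{n-m-1}|a,-a,c,-c\big)\mathsf{N}_{r,m+1}=\mathsf{N}_{r+1,m}+g\big(t^{n-r}|a,b,c,d\big)\mathsf{N}_{r,m}+f\big(t^{n-r}|a,b,c,d\big)\mathsf{N}_{r-1,m}.
\]
Substituting this and summing against $\mathsf{M}_{m,\gamma}$ reassembles the right-hand side $P_{r+1,\gamma}+g\big(t^{n-r}|a,b,c,d\big)P_{r,\gamma}+f\big(t^{n-r}|a,b,c,d\big)P_{r-1,\gamma}$, which is \eqref{strB}.

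The crux, and the only place where anything nontrivial happens, is the matching of the two occurrences of the specialized weight $f(\cdot|a,-a,c,-c)$. The four term relation for $\mathsf{M}$ produces, after reindexing, the coefficient $f\big(t^{n-m-1}|a,-a,c,-c\big)$ attached to $\mathsf{N}_{r,m+1}$, while the five term relation for $\mathsf{N}$ independently produces the coefficient $f\big(t^{n-J-1}|a,-a,c,-c\big)=f\big(t^{n-m-1}|a,-a,c,-c\big)$ in front of the very same entry. That two a priori unrelated relations, one for the even factor $\mathsf{M}$ living in strata~II and one for the full factor $\mathsf{N}$, contribute the identical strata~II weight with the identical argument is exactly what allows the two steps to telescope into \eqref{strB}. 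I would therefore flag the verification of this coincidence of arguments and specializations as the main thing to confirm; once it is in hand, the proof is the two-step manipulation above.
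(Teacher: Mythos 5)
Your proposal is correct and is essentially the paper's own proof: both write $B(n,r,p)$ as the convolution $\sum_m \mathsf{N}_{r,m}\mathsf{M}_{m,r-p}$, apply the four-term relation of Theorem~\ref{HS-ftr} to the $\mathsf{M}$ factor, reindex the middle summation variable, and then invoke the five-term relation of Theorem~\ref{str} for $\mathsf{N}$ at exactly the instance you identify (with the matching specialized weight $f\big(t^{n-m-1}|a,-a,c,-c\big)$). The only difference is presentational: the paper splits into the cases $p=2k$ and $p=2k+1$ and handles the sum boundaries by explicit reindexing, whereas your uniform column index $\gamma$ together with the vanishing conventions absorbs both cases at once.
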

\begin{proof}First, we shall show $(\ref{strB})$ for $p=2k$. We have
\begin{gather}
\text{l.h.s.\ of } (\ref{strB}) =
 \sum_{l=0}^{k} \mathsf{M}_{r - 2l, r - 2k}
\mathsf{N}_{r, r-2l}
+
\sum_{l=0}^{k-1}
\mathsf{M}_{r - 2l, r - 2k+2}
\mathsf{N}_{r, r-2l} \notag \\
\hphantom{\text{l.h.s.\ of } (\ref{strB})}{} =
\mathsf{M}_{r-2k, r-2k} \mathsf{N}_{r, r-2k}
+ \sum_{l=0}^{k-1}
\mathsf{M}_{r - 2l, r - 2k}
\mathsf{N}_{r, r-2l}
+
\sum_{l=0}^{k-1}
\mathsf{M}_{r - 2l, r - 2k+2}
\mathsf{N}_{r, r-2l} \notag \\
\hphantom{\text{l.h.s.\ of } (\ref{strB})}{} =
\mathsf{N}_{r, r-2k}
+ \sum_{l=0}^{k-1}\bigl(
\mathsf{M}_{r - 2l, r - 2k} + \mathsf{M}_{r - 2l, r - 2k+2} \bigr)
\mathsf{N}_{r, r-2l}.\label{strB-1}
\end{gather}
By Theorem \ref{HS-ftr}, we have
\begin{gather*}
\text{r.h.s.\ of } (\ref{strB-1})\\
\qquad{} =\mathsf{N}_{r, r-2k} + \sum_{l=0}^{k-1}\bigl(
\mathsf{M}_{r-2l+1, r-2k+1} + f(t^{n-r+2l-2}|a,-a,c,-c) \mathsf{M}_{r-2l-1, r-2k+1} \bigr)
\mathsf{N}_{r, r-2l} \\
\qquad{} =
\sum_{l=0}^{k} \mathsf{M}_{r-2l+1, r-2k+1}\mathsf{N}_{r, r-2l}
+ \sum_{l=0}^{k-1} f\big(t^{n-r2l-2}|a,-a,c,-c\big) \mathsf{M}_{r-2l-1, r-2k+1} \mathsf{N}_{r, r-2l}.
\end{gather*}
Then we have
\begin{gather}
\text{(l.h.s.\ $-$ r.h.s.\ ) of (\ref{strB})} \notag \\
{}=\sum_{l=0}^{k} \mathsf{M}_{r-2l+1, r-2k+1}\mathsf{N}_{r, r-2l}
+ \sum_{l=0}^{k-1} f\big(t^{n-r+2l}|a,-a,c,-c\big) \mathsf{M}_{r-2l-1, r-2k+1} \mathsf{N}_{r, r-2l} \notag \\
\qquad{} - \sum_{l=0}^{k}
\mathsf{M}_{r+1 - 2l, r+1 - 2k}
\mathsf{N}_{r+1, r+1-2l} - \sum_{l=0}^{k-1}
f\big(t^{n-r}|a,b,c,d\big) \mathsf{M}_{r-1 - 2l, r - 2k +1} \mathsf{N}_{r-1, r-1-2l} \notag \\
\qquad{}
-g(t^{n-r}|a,b,c,d) \sum_{l=0}^{k}
\mathsf{M}_{r - 2l + 1, r - 2k + 1}
\mathsf{N}_{r, r-2l+1} \notag \\
=\sum_{l=0}^{k} \mathsf{M}_{r-2l+1, r-2k+1}
\big(\mathsf{N}_{r, r-2l} - \mathsf{N}_{r+1, r+1-2l} \big) \notag \\
\qquad + \sum_{l=0}^{k-1} \mathsf{M}_{r-2l-1, r-2k+1}
\big(f\big(t^{n-r+2l}|a,-a,c,-c\big)\mathsf{N}_{r, r-2l} -
f\big(t^{n-r}|a,b,c,d\big) \mathsf{N}_{r-1, r-1-2l} \big) \notag\\
\qquad{} - \sum_{l=0}^{k}
\mathsf{M}_{r - 2l + 1, r - 2k + 1} g\big(t^{n-r}|a,b,c,d\big) \mathsf{N}_{r, r-2l+1}. \label{strB-2}
\end{gather}

The second summation in the r.h.s.\ of (\ref{strB-2}) can be recast as
\begin{gather*}
 \sum_{l=0}^{k-1} \mathsf{M}_{r-2l-1, r-2k+1}
\big(f\big(t^{n-r+2l}|a,-a,c,-c\big)\mathsf{N}_{r, r-2l} -
f\big(t^{n-r}|a,b,c,d\big)
\mathsf{N}_{r-1, r-1-2l} \big) \\
 = \sum_{l=1}^{k} \mathsf{M}_{r-2l+1, r-2k+1}
\big(f\big(t^{n-r+2l-2}|a,-a,c,-c\big)\mathsf{N}_{r, r-2l+2} -
f\big(t^{n-r}|a,b,c,d\big)
\mathsf{N}_{r-1, r-2l+1}\big) \\
 =
 \sum_{l=0}^{k} \mathsf{M}_{r-2l+1, r-2k+1}
\big(f\big(t^{n-r+2l-2}|a,-a,c,-c\big)\mathsf{N}_{r, r-2l+2} -
f\big(t^{n-r}|a,b,c,d\big)
\mathsf{N}_{r-1, r-2l+1} \big).
\end{gather*}

Hence, the r.h.s.\ of (\ref{strB-2}) reduces to
\begin{gather*}
\sum_{l=0}^{k} \mathsf{M}_{r-2l+1, r-2k+1}
\big(\mathsf{N}_{r, r-2l} - \mathsf{N}_{r+1, r+1-2l}+
f\big(t^{n-r+2l}|a,-a,c,-c\big)\mathsf{N}_{r, r-2l} \notag \\
\qquad {}-
f\big(t^{n-r}|a,b,c,d\big)
\mathsf{N}_{r-1, r-1-2l}
- g\big(t^{n-r}|a,b,c,d\big)
\mathsf{N}_{r, r-2l+1} \big) =0. 
\end{gather*}
Here we have used Theorem \ref{str}.

Second, we shall show $(\ref{strB})$ for $p=2k+1$.
\begin{gather}
\text{l.h.s.\ of } (\ref{strB}) =
 \sum_{l=1}^{k+1}
\mathsf{M}_{r - 2l+1, r - 2k-1}
\mathsf{N}_{r, r-2l+1}
+
\sum_{l=1}^{k}
\mathsf{M}_{r - 2l+1, r - 2k+1}
\mathsf{N}_{r, r-2l+1} \notag \\
 \hphantom{\text{l.h.s.\ of } (\ref{strB})}{} =
\mathsf{M}_{r - 2k-1, r - 2k-1}
\mathsf{N}_{r, r-2k-1}
+
 \sum_{l=1}^{k}
\mathsf{M}_{r - 2l+1, r - 2k-1}
\mathsf{N}_{r, r-2l+1} \notag \\
\hphantom{\text{l.h.s.\ of } (\ref{strB})=}{} +
\sum_{l=1}^{k}
\mathsf{M}_{r - 2l+1, r - 2k+1}
\mathsf{N}_{r, r-2l+1} \notag \\
\hphantom{\text{l.h.s.\ of } (\ref{strB})}{} =
\mathsf{N}_{r, r-2k-1}
+ \sum_{l=1}^{k}\big(
\mathsf{M}_{r - 2l+1, r - 2k-1} + \mathsf{M}_{r - 2l+1, r - 2k+1}\big)
\mathsf{N}_{r, r-2l+1}.\label{strB-1o}
\end{gather}

By Theorem \ref{HS-ftr}, we have
\begin{gather*}
\text{r.h.s.\ of } (\ref{strB-1o})\\
\qquad{} =\mathsf{N}_{r, r-2k-1}
+ \sum_{l=1}^{k}\big(
\mathsf{M}_{r - 2l+2, r - 2k} +
f\big(t^{n-r+2l-1}|a,-a,c,-c\big)\mathsf{M}_{r - 2l, r - 2k} \big)
\mathsf{N}_{r, r-2l+1} \\
\qquad{} =
\sum_{l=1}^{k+1} \mathsf{M}_{r - 2l+2, r - 2k}\mathsf{N}_{r, r-2l+1}+ \sum_{l=1}^{k}
f\big(t^{n-r+2l-1}|a,-a,c,-c\big)\mathsf{M}_{r - 2l, r - 2k} \mathsf{N}_{r, r-2l+1}.
\end{gather*}

Then we have
\begin{gather}
\text{(l.h.s.\ $-$ r.h.s.\ ) of (\ref{strB})} \notag \\
\qquad{}=\sum_{l=1}^{k+1} \mathsf{M}_{r - 2l+2, r - 2k}\mathsf{N}_{r, r-2l+1}
+ \sum_{l=1}^{k}
f\big(t^{n-r+2l-1}|a,-a,c,-c\big)\mathsf{M}_{r - 2l, r - 2k}
\mathsf{N}_{r, r-2l+1} \notag \\
\qquad\quad{}
- \sum_{l=1}^{k+1}
\mathsf{M}_{r - 2l+2, r- 2k}
\mathsf{N}_{r+1, r-2l+2}
- \sum_{l=1}^{k}
f\big(t^{n-r}|a,b,c,d\big) \mathsf{M}_{r- 2l, r - 2k}
\mathsf{N}_{r-1, r-2l} \notag \\
\qquad\quad{}
-g\big(t^{n-r}|a,b,c,d\big) \sum_{l=0}^{k}
\mathsf{M}_{r - 2l, r - 2k}
\mathsf{N}_{r, r-2l} \notag \\
\qquad{}=\sum_{l=1}^{k+1} \mathsf{M}_{r-2l+2, r-2k}
\big(\mathsf{N}_{r, r-2l+1} - \mathsf{N}_{r+1, r-2l+2} \big) \notag \\
\qquad\quad{}
+ \sum_{l=1}^{k} \mathsf{M}_{r-2l, r-2k}
\big(f\big(t^{n-r+2l-1}|a,-a,c,-c\big)\mathsf{N}_{r, r-2l+1}
-f\big(t^{n-r}|a,b,c,d\big)
\mathsf{N}_{r-1, r-2l}\big) \notag
\\
\qquad\quad{}
- \sum_{l=1}^{k+1}
\mathsf{M}_{r - 2l + 2, r - 2k}
g\big(t^{n-r}|a,b,c,d\big)
\mathsf{N}_{r, r-2l+2}. \label{strB-2o}
\end{gather}

The second summation in the r.h.s.\ of (\ref{strB-2o}) can be modified as
\begin{gather*}
 \sum_{l=1}^{k} \mathsf{M}_{r-2l, r-2k}
\big(f\big(t^{n-r+2l-1}|a,-a,c,-c\big)\mathsf{N}_{r, r-2l+1}
-f\big(t^{n-r}|a,b,c,d\big)\mathsf{N}_{r-1, r-2l}\big) \\
 = \sum_{l=2}^{k+1} \mathsf{M}_{r-2l+2, r-2k}
\big(f\big(t^{n-r+2l-3}|a,-a,c,-c\big)\mathsf{N}_{r, r-2l+3}
-f\big(t^{n-r}|a,b,c,d\big)
\mathsf{N}_{r-1, r-2l+2} \big) \\
 = \sum_{l=1}^{k+1} \mathsf{M}_{r-2l+2, r-2k}
\big(f\big(t^{n-r+2l-3}|a,-a,c,-c\big)\mathsf{N}_{r, r-2l+3}
-f\big(t^{n-r}|a,b,c,d\big)\mathsf{N}_{r-1, r-2l+2} \big).
\end{gather*}

Hence, the r.h.s.\ of (\ref{strB-2o}) reduces to
\begin{gather*}
 \sum_{l=1}^{k+1} \mathsf{M}_{r-2l+2, r-2k}\big(\mathsf{N}_{r, r-2l+1} - \mathsf{N}_{r+1, r-2l+2}
+f\big(t^{n-r+2l-3}|a,-a,c,-c\big)\mathsf{N}_{r, r-2l+3} \notag \\
 \qquad{} -f\big(t^{n-r}|a,b,c,d\big)\mathsf{N}_{r-1, r-2l+2}
-g\big(t^{n-r}|a,b,c,d\big)\mathsf{N}_{r, r-2l+2} \big) =0. 
\end{gather*}
Here we have used Theorem \ref{str}.
\end{proof}

\section{Proof of Theorems \ref{main2} and \ref{main1}} \label{ProofMAIN}
\subsection[Transition coefficients $\mathcal{C}^{(n)}_{i,j}$]{Transition coefficients $\boldsymbol{\mathcal{C}^{(n)}_{i,j}}$}

In this section we set $f(s) = f(s|a,b,c,d)$ and $g(s) = g(s|a,b,c,d)$ for simplicity.
Recall that we have the expansion (Theorem~\ref{dmat}, Definitions~\ref{MandN} and~\ref{B(n,r,p)})
\begin{gather}
P_{(1^r)} (x|a,b,c,d|q,t)=\sum_{p=0}^r B(n,r,p)E_{r-p}(x).\label{tr1}
\end{gather}
In the previous section, we have established the five term relations for $B(n,r,p)$ in Theorem~\ref{strBthm}.

\begin{dfn}\label{C^n}
Define $\mathcal{C}^{(n)}=\big(\mathcal{C}^{(n)}_{i, j}\big)_{0\leq i,j\leq n}$ as
the finite upper triangular (i.e., $\mathcal{C}^{(n)}_{i, j}=0$ for $i>j$)
transition matrix from the monomial basis $(m_{(1^{r})}(x))$ to the
one with Koornwinder polynomials $(P_{(1^r)} (x|a,b,c,d|q,t))$:
\begin{gather}
P_{(1^r)} (x|a,b,c,d|q,t)=\sum_{k=0}^r
\mathcal{C}^{(n)}_{n-r,n-r+k} m_{(1^{r-k})}(x).\label{tr2}
\end{gather}
\end{dfn}

Recall Lemma \ref{Lem-Em} (Lemma~3.3 in~\cite{HS}):
\begin{gather*}
E_r(x)= \sum_{k=0}^{\lfloor{r \over 2}\rfloor} \binom{n-r+2k}{k} m_{(1^{r-2k})}(x),
\end{gather*}
where $\binom{m}{j}$ denotes the ordinary binomial coefficient.

\begin{prp} \label{C-Binom}
We have
\begin{gather}
\mathcal{C}^{(n)}_{n-r,n-r+2h} = \sum_{p=0}^h B(n,r,2p) \binom{n-r+2h}{ h-p}, \qquad 0 \leq h \leq \lfloor r/2\rfloor , \label{C-B-1} \\
\mathcal{C}^{(n)}_{n-r,n-r+2h+1} = \sum_{p=0}^h B(n,r,2p+1) \binom{n-r+2h+1}{ h-p},
\qquad 0 \leq h \leq \lfloor (r-1)/2 \rfloor .\nonumber 
\end{gather}
\end{prp}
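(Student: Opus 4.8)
The plan is to insert the expansion of $E_{r-p}$ provided by Lemma~\ref{Lem-Em} into the expansion~(\ref{tr1}) of $P_{(1^r)}$ in the basis $(E_s)$, and then read off the coefficients in the monomial basis, comparing with the defining relation~(\ref{tr2}) for $\mathcal{C}^{(n)}$. Since both expansions are genuine and the three transition matrices in play are triangular, matching coefficients termwise is legitimate.

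Concretely, I would start from~(\ref{tr1}) and apply Lemma~\ref{Lem-Em} with $r$ replaced by $r-p$, so that
\begin{gather*}
P_{(1^r)}(x|a,b,c,d|q,t)
= \sum_{p=0}^r B(n,r,p) \sum_{m\geq 0} \binom{n-r+p+2m}{m} m_{(1^{r-p-2m})}(x).
\end{gather*}
The next step is to collect, for each fixed $k$, the coefficient of $m_{(1^{r-k})}(x)$. Writing $k=p+2m$ so that $m_{(1^{r-p-2m})}=m_{(1^{r-k})}$, the key observation is that the numerator of the binomial coefficient becomes $n-r+p+2m=n-r+k$, which no longer depends on $m$ along this fiber. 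Hence the coefficient of $m_{(1^{r-k})}(x)$ equals $\sum_{m\geq 0,\ k-2m\geq 0} B(n,r,k-2m)\binom{n-r+k}{m}$, and comparison with~(\ref{tr2}) yields
\begin{gather*}
\mathcal{C}^{(n)}_{n-r,n-r+k}=\sum_{m\geq 0,\ k-2m\geq 0} B(n,r,k-2m)\binom{n-r+k}{m}.
\end{gather*}

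It then remains to separate according to the parity of $k$. For $k=2h$ I would set $p=h-m$, so that $k-2m=2p$ and $m=h-p$, with the summation ranging over $0\leq p\leq h$; this gives exactly the first asserted formula. For $k=2h+1$ the same substitution $p=h-m$ gives $k-2m=2p+1$ and $m=h-p$, again with $0\leq p\leq h$, producing the second formula. The bounds $0\leq h\leq \lfloor r/2\rfloor$ and $0\leq h\leq \lfloor (r-1)/2\rfloor$ follow from the requirement $r-k\geq 0$.

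Because the argument is a direct substitution followed by reindexing, there is no genuine obstacle here; the only point requiring care is the bookkeeping of the summation ranges, together with the observation that the binomial's numerator collapses to the $m$-independent value $n-r+k$, which is precisely what makes the two expansions agree termwise.
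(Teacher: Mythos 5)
Your proposal is correct and is precisely the argument the paper intends: its one-line proof ("these follow from (\ref{tr1}) and (\ref{tr2})") tacitly uses Lemma~\ref{Lem-Em} (recalled immediately before the proposition), and your substitution of that lemma into (\ref{tr1}), collection of the coefficient of $m_{(1^{r-k})}(x)$ along the fiber $p+2m=k$ (where the binomial numerator collapses to $n-r+k$), and parity reindexing $p=h-m$ fills in exactly the omitted bookkeeping. No discrepancy with the paper's approach.
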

\begin{proof}These follow from (\ref{tr1}) and (\ref{tr2}).
\end{proof}

\begin{prp}\label{propC^n}
We have $i>j \Rightarrow \mathcal{C}^{(n)}_{i, j}=0$,
$\mathcal{C}^{(n)}_{i, i}=1$ $(i\geq 1)$, and
\begin{gather}
 \mathcal{C}^{(n)}_{i,j} = \mathcal{C}^{(n)}_{i-1, j-1} + g\big(t^i\big) \mathcal{C}^{(n)}_{i, j-1} + f\big(t^i\big)
\mathcal{C}^{(n)}_{i+1, j-1}. \label{REC}
\end{gather}
\end{prp}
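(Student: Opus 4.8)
The plan is to deduce all three assertions from Proposition~\ref{C-Binom} together with the five term relation of Theorem~\ref{strBthm}. The triangularity $i>j\Rightarrow\mathcal{C}^{(n)}_{i,j}=0$ is part of Definition~\ref{C^n}, and the normalization $\mathcal{C}^{(n)}_{i,i}=1$ is immediate from the monic characterization~(a) of the Koornwinder polynomials: the coefficient of $m_{(1^r)}$ in $P_{(1^r)}$ is $\mathcal{C}^{(n)}_{n-r,n-r}$, which equals $1$ (equivalently $B(n,r,0)=1$, each Bressoud factor in $\mathsf{M}\mathsf{N}$ having unit diagonal). Hence the only real content is the recursion~\eqref{REC}.

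First I would merge the two cases of Proposition~\ref{C-Binom} into the single expression
\[
\mathcal{C}^{(n)}_{n-r,\,n-r+k}=\sum_{p+2m=k}B(n,r,p)\binom{n-r+k}{m},
\]
the sum taken over all $p,m\ge 0$; since $p+2m=k$ forces $p\equiv k\pmod 2$, this reproduces the even ($k=2h$) and odd ($k=2h+1$) formulas verbatim. Setting $i=n-r$ and $k=j-i$, the four entries occurring in~\eqref{REC} are the $(r,k)$, $(r+1,k)$, $(r,k-1)$ and $(r-1,k-2)$ coefficients in this notation, and $f(t^i)=f(t^{n-r})$, $g(t^i)=g(t^{n-r})$ match exactly the arguments appearing in Theorem~\ref{strBthm}. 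Writing $N=n-r$, I would then apply Pascal's rule $\binom{N+k}{m}=\binom{N+k-1}{m}+\binom{N+k-1}{m-1}$ to the displayed formula for $\mathcal{C}^{(n)}_{n-r,n-r+k}$ and reindex the second piece by $m\mapsto m+1$, obtaining
\[
\mathcal{C}^{(n)}_{n-r,\,n-r+k}=\sum_{m\ge 0}\bigl(B(n,r,k-2m)+B(n,r,k-2m-2)\bigr)\binom{N+k-1}{m}.
\]

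At this point the bracket is precisely the left-hand side of Theorem~\ref{strBthm} with $p=k-2m$, so I would substitute the five term relation to replace it by $B(n,r+1,k-2m)+f(t^{N})B(n,r-1,k-2m-2)+g(t^{N})B(n,r,k-2m-1)$. Splitting the resulting sum into three pieces and reading off the constraint $p+2m$ in each ($k$, $k-2$, and $k-1$ respectively), I would recognize them --- using $N+k-1=(N-1)+k=(N+1)+(k-2)=N+(k-1)$ --- as exactly $\mathcal{C}^{(n)}_{n-(r+1),n-(r+1)+k}$, $f(t^N)\,\mathcal{C}^{(n)}_{n-(r-1),n-(r-1)+(k-2)}$ and $g(t^N)\,\mathcal{C}^{(n)}_{n-r,n-r+(k-1)}$, which is the right-hand side of~\eqref{REC}. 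The argument is conceptually routine once Theorem~\ref{strBthm} is in hand; the only place demanding care --- and the main obstacle --- is keeping the simultaneous shifts of $r$ and of $N=n-r$ consistent, so that each regrouped sum carries the correct binomial top and the $f,g$ arguments remain at $t^{n-r}$. Boundary cases are harmless under the convention that $B(n,r,p)$ and the binomial coefficients vanish outside their natural ranges.
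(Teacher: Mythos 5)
Your proposal is correct and takes essentially the same route as the paper's proof: both reduce \eqref{REC} to the combination of Proposition~\ref{C-Binom}, Pascal's rule for binomial coefficients, and the five term relation of Theorem~\ref{strBthm}, with triangularity and the unit diagonal read off from Definition~\ref{C^n} and $B(n,r,0)=1$. The only cosmetic differences are that you transform the left-hand side of \eqref{REC} into the right-hand side rather than the reverse, and that your merged formula handles both parities of $j-i$ simultaneously, whereas the paper writes out the even case and notes the odd case is analogous.
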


\begin{proof}By Definition \ref{C^n}, we have the triangularity $i>j \Rightarrow \mathcal{C}^{(n)}_{i, j}=0$.
We have from~(\ref{C-B-1}) that $\mathcal{C}^{(n)}_{i, i}=1$ $(i\geq 1)$.
Next we shall show~(\ref{REC}) for the case: $i=n-r$, $j=n-r+2h$ ($h \geq 0$).
(The other case $i=n-r$, $j=n-r+2h+1$ ($h \geq 0$) can be treated in the same way,
hence we can omit it safely.) The r.h.s.\ of~(\ref{REC}) in this case reads
\begin{gather}
 \mathcal{C}^{(n)}_{n-r-1, n-r+2h-1} + g\big(t^{n-r}\big) \mathcal{C}^{(n)}_{n-r, n-r+2h-1}
+ f\big(t^{n-r}\big) \mathcal{C}^{(n)}_{n-r+1, n-r+2h-1} \notag \\
\qquad{} =
\sum_{p=0}^{h}\! B(n,r+1,2p) \binom{n-r-1+2h}{ h-p}
+g\big(t^{n-r}\big) \sum_{p=0}^{h-1}\! B(n,r,2p+1) \binom{n-r+2h-1}{h-1-p} \notag \\
 \qquad\quad{} +f\big(t^{n-r}\big) \sum_{p=0}^{h-1} B(n,r-1,2p) \binom{n-r+2h-1}{h-1-p}. \label{pm1}
\end{gather}
Concerning the first term in the r.h.s.\ of (\ref{pm1}), we have
\begin{gather}
 \sum_{p=0}^{h} B(n,r+1,2p) \binom{n-r-1+2h}{ h-p} \notag \\
\qquad{}=\binom{n-r-1+2h}{h} + \sum_{p=1}^{h} B(n,r+1,2p) \binom{n-r-1+2h}{h-p} \notag \\
\qquad{}=\binom{n-r-1+2h}{h} + \sum_{p=0}^{h-1} B(n,r+1,2p+2) \binom{n-r-1+2h}{h-p-1}.\label{pm2}
\end{gather}
By Theorem \ref{strBthm}, the other parts of the r.h.s.\
of (\ref{pm1}) are calculated as
\begin{gather}
\sum_{p=0}^{h-1} \binom{n-r+2h-1}{h-1-p}
\big( g\big(t^{n-r}\big) B(n,r,2p+1) + f\big(t^{n-r}\big) B(n,r-1,2p) \big) \notag \\
\qquad{} =\sum_{p=0}^{h-1} \binom{n-r+2h-1}{h-1-p}
\big(B(n,r,2p+2) + B(n,r,2p) - B(n,r+1,2p+2)\big) \notag \\
\qquad{} =\sum_{p=0}^{h-1} \binom{n-r+2h-1}{h-1-p}
\big(B(n,r,2p+2) - B(n,r+1,2p+2)\big) \notag \\
\qquad\quad {} + \sum_{p=0}^{h-1} \left(\binom{n-r+2h}{h-p} - \binom{n-r+2h-1}{h-p}\right)B(n,r,2p) \notag \\
\qquad{} =\sum_{p=0}^{h-1} \binom{n-r+2h-1}{h-1-p}
\big(B(n,r,2p+2) - B(n,r+1,2p+2)\big) \notag \\
\qquad\quad {}+ \sum_{p=0}^{h} \left(\binom{n-r+2h}{h-p} - \binom{n-r+2h-1}{h-p}\right)B(n,r,2p). \label{pm3}
\end{gather}
Hence, from (\ref{pm2}) and (\ref{pm3}), the r.h.s.\ of (\ref{pm1}) is
\begin{gather}
 \binom{n-r-1+2h}{h} + \sum_{p=0}^{h} \binom{n-r+2h}{h-p} B(n,r,2p) \notag \\
\qquad{} +\sum_{p=0}^{h-1} \binom{n-r+2h-1}{h-1-p} B(n,r,2p+2)
-\sum_{p=0}^{h}\binom{n-r+2h-1}{h-p}B(n,r,2p). \label{pm4}
\end{gather}
Then noting that the last term of (\ref{pm4}) is recast as
\begin{gather*}
 \binom{n-r+2h-1}{h}+
\sum_{p=1}^{h}\binom{n-r+2h-1}{h-p}B(n,r,2p) \notag \\
\qquad{} =\binom{n-r+2h-1}{h}+
\sum_{p=0}^{h-1}\binom{n-r+2h-1}{h-p-1}B(n,r,2p+2), 
\end{gather*}
the r.h.s.\ of (\ref{pm1}) is calculated as
\begin{gather*}
\sum_{p=0}^{h} \binom{n-r+2h}{h-p} B(n,r,2p) = \mathcal{C}^{(n)}_{n-r, n-r+2h}.\tag*{\qed}
\end{gather*}\renewcommand{\qed}{}
\end{proof}

\subsection{Proof of Theorem \ref{main2}} \label{ProofMAIN2}

Now we are ready to state our proof of Theorem~\ref{main2}.

\begin{proof}[Proof of Theorem \ref{main2}]
Consider the following recursion equation for the
infinite upper triangular matrix $X=(X_{i,j})_{0\leq i,j<\infty}$:
\begin{subequations}
\begin{gather}
 X \text{ is upper triangular,} \label{X-1}\\
 X_{i,i} = 1, \qquad i \geq 0, \label{X-2}\\
 X_{i,j} = X_{i-1, j-1} + g\big(t^i\big) X_{i, j-1} + f\big(t^i\big) X_{i+1, j-1}.\label{X-3}
\end{gather}
\end{subequations}
The solution to this recursion equation (\ref{X-1}), (\ref{X-2}), (\ref{X-3})
for $X$ exists uniquely.
Write by $\mathcal{C}=(\mathcal{C}_{i,j})_{0\leq i,j<\infty}$ the unique solution.
The stability property (i.e., the independence of
$\mathcal{C}$ on $n$ is clear.
We have established that $\mathcal{C}^{(n)}=\big(\mathcal{C}^{(n)}_{i, j}\big)$
satisfies the same recursion equation (\ref{X-1}), (\ref{X-2}), (\ref{X-3}).
Hence we have $\mathcal{C}^{(n)}_{i, j}=\mathcal{C}_{i,j}$
($0\leq i,j\leq n$). This proves Theorem~\ref{main2}.
\end{proof}

\subsection{Proof of Theorem \ref{main1}} \label{ProofMAIN1}

By Theorem \ref{main2} we have
\begin{gather*}
P^{BC_n}_{(1^r)} = \sum_{k=0}^r \mathcal{C}_{n-r, n-r+k} m_{(1^{r-k})}(x_1, x_2, \ldots, x_n).
\end{gather*}
Noting that
\begin{gather*}
m_{(1^{r-k})}(x_1, x_2, \ldots, x_n) = m_{(1^{r-k})}(x_1, x_2, \ldots, x_{n-1})\\
\hphantom{m_{(1^{r-k})}(x_1, x_2, \ldots, x_n) =}{}
+ (x_n+1/x_n )m_{(1^{r-k-1})}(x_1, x_2, \ldots, x_{n-1}),
\end{gather*}
we have
\begin{gather}
 \mathcal{C}_{n-r, n-r+k} m_{(1^{r-k})}(x_1, x_2, \ldots, x_n) =\mathcal{C}_{n-r, n-r+k} m_{(1^{r-k})}(x_1, x_2, \ldots, x_{n-1})
 \label{main1eq1}\\
 \hphantom{\mathcal{C}_{n-r, n-r+k} m_{(1^{r-k})}(x_1, x_2, \ldots, x_n) =}{}
+ \mathcal{C}_{n-r, n-r+k} (x_n+1/x_n) m_{(1^{r-k-1})}(x_1, x_2, \ldots, x_{n-1}).\nonumber
\end{gather}
The first term of (\ref{main1eq1}), by (\ref{rec3}), is
\begin{gather*}
 \mathcal{C}_{n-r, n-r+k} m_{(1^{r-k})}(x_1, x_2, \ldots, x_{n-1})
 =\big( \mathcal{C}_{n-r-1, n-r+k-1} + g\big(t^{n-r}\big) \mathcal{C}_{n-r, n-r+k-1}\\
\qquad{}+ f\big(t^{n-r}\big) \mathcal{C}_{n-r+1, n-r+k-1}\big)
m_{(1^{r-k})}(x_1, x_2, \ldots, x_{n-1}).
\end{gather*}
Then we have
\begin{gather*}
 P^{BC_n}_{(1^r)} = \sum_{k=0}^r \mathcal{C}_{n-r-1, n-r-1+k} m_{(1^{r-k})}(x_1, x_2, \ldots, x_{n-1}) \notag \\
\hphantom{P^{BC_n}_{(1^r)} =}{} + \sum_{k=0}^{r-1} \big((x_n+1/x_n) + g\big(t^{n-r}\big)\big)\mathcal{C}_{n-r, n-r+k} m_{(1^{r-k-1})}(x_1, x_2, \ldots, x_{n-1}) \notag \\
\hphantom{P^{BC_n}_{(1^r)} =}{} + \sum_{k=0}^{r-2} f(t^{n-r}) \mathcal{C}_{n-r+1, n-r+1+k} m_{(1^{r-k-2})}(x_1, x_2, \ldots, x_{n-1}).
\end{gather*}
This completes the proof of Theorem \ref{main1}. \qed

\section[Some degenerations of Macdonald polynomials of type $B_n$
with one column diagrams and Kostka polynomials]{Some degenerations of Macdonald polynomials of type $\boldsymbol{B_n}$\\
with one column diagrams and Kostka polynomials} \label{KostkaB}

This section is devoted to the study on certain degenerations of our formulas for the Macdonald polynomial $P^{(B_n, B_n)}_{(1^r)}(x|a;q,t)$.

\subsection[Macdonald polynomials of type $(B_n, B_n)$]{Macdonald polynomials of type $\boldsymbol{(B_n, B_n)}$}

Setting the parameters as $(a,b,c,d; q,t) \rightarrow \big(q^{1/2}, -q^{1/2}, -1, t; q,t\big)$ in the Koornwinder polynomial $P_{\lambda}$, we obtain the Macdonald polynomials of type $(B_n, B_n)$
\begin{gather*}
P_{\lambda}^{(B_n, B_n)}(x|a;q,t) = P_{\lambda}\big(x|q^{1/2}, -q^{1/2}, -1, a| q,t\big).
\end{gather*}

\begin{thm}\label{tqt} We have
\begin{gather*}
 P_{(1^r)}^{(B_n, B_n)}(x|t;q,t) =\sum_{j = 0}^{r}
{
\big(1/t, t^{n-r+1}, -t^{n-r} q, t^{2n-2r} q/t; t\big)_j
\over
\big(t, t^{2n-2r+1}q; t\big)_j \big(t^{2n-2r} q/t; t^2\big)_j
}(-t)^j \\
\hphantom{P_{(1^r)}^{(B_n, B_n)}(x|t;q,t) =}{}
 \times \sum_{k = 0}^{\lfloor{r-j \over 2}\rfloor}
{ \big(t/q ; t^2\big)_k \big(t^{n-r+2+j} ; t^2\big)_k \big(t^{2n-2r+2j} ; t^2\big)_k
\over
 \big(t^2 ; t^2\big)_k \big(t^{n-r+j} ; t^2\big)_k \big(t^{2n-2r+1+2j}q ; t^2\big)_k }
{\big(t^{n-r+j} ; t\big)_{2k}
\over
\big(t^{n-r+1+j} ; t\big)_{2k} }\\
\hphantom{P_{(1^r)}^{(B_n, B_n)}(x|t;q,t) =}{}\times
{ 1-t^{n-r+2k+j}
\over
 1-t^{n-r+j} }q^{k} E_{r-2k-j}(x), \\
 E_r(x) =\sum_{k=0}^{\lfloor{r \over 2}\rfloor}
{ \big(t^{n-r+1}; t\big)_{2k}
\over
\big(t^{n-r}; t\big)_{2k} }
{ \big(q/t; t^2\big)_{k}
\over
\big(t^2, t^{2n-2r+2}; t^2\big)_{k} }
{ \big(t^{2n-2r}; t^2\big)_{2k} \big(t^{2n-2r-1}q; t^2\big)_{k}
\over
 \big(t^{2n-2r-1}q; t^2\big)_{2k}}
t^k \\
 \hphantom{E_r(x) =}{} \times\! \sum_{j=0}^{r-2k}\! (-1)^j\!
{ \big(t^{n-r+2k+1}, -t^{n-r+2k}q, -t^{n-r+2k}q^{1/2}, t^{n-r+2k}q^{1/2}; t\big)_j
\over
 \big(t^{2n-2r+4k}q; t\big)_{2j} }
P^{(B_n,B_n)}_{(1^{r-2k-j})}(t; q,t).
\end{gather*}
\end{thm}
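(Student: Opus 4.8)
The plan is to read both identities directly off the degeneration chain of Theorem~\ref{dmat} and its inverse Theorem~\ref{dmat-dual}, specialized to the Macdonald $(B_n,B_n)$ point $(a,b,c,d)=(q^{1/2},-q^{1/2},-1,t)$, and then to reduce the resulting Bressoud matrix entries to the closed forms stated above. First I would record which of the four steps collapse at this point. Since $-a/b=-q^{1/2}/(-q^{1/2})=1$, the factor $(-a/b;t)_i$ in $\widehat{c_o}^{\rm new}(i,0;s)$ forces $\widehat{c_o}^{\rm new}(i,0;s)=0$ for $i>0$, so \eqref{deg1} degenerates to $P^{BC_n,\mathrm{IV}}_{(1^r)}(x)=P^{BC_n,\mathrm{III}}_{(1^r)}(x)$; similarly $1/c^2=1$ makes $(1/c^2;t)_l$ vanish in $\widehat{c}\,'_e(0,l;s)$ for $l>0$, so \eqref{deg4} degenerates to $P^{BC_n,\mathrm{I}}_{(1^r)}(x)=E_r(x)$. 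Only the two middle steps \eqref{deg2} and \eqref{deg3} survive.

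For the first (expansion) formula I would compose these two surviving steps,
\[P^{(B_n,B_n)}_{(1^r)}(x|t;q,t)=\sum_{j\ge0}(-1)^j\,\widehat{c_o}^{\rm new}\big(0,j;t^{n-r+1}\big)\sum_{k\ge0}\widehat{c}\,'_e\big(k,0;t^{n-r+1+j}\big)\,E_{r-2k-j}(x),\]
and then substitute the explicit expressions of Lemma~\ref{lem-1} at $(a,b,c,d)=(q^{1/2},-q^{1/2},-1,t)$, using $s=t^{n-r+1}$ in the $j$-factor and $s=t^{n-r+1+j}$ in the $k$-factor. Cancelling the common Pochhammer symbols then yields the first displayed identity; this step is entirely routine.

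For the inverse (second) formula I would run the same specialization through Theorem~\ref{dmat-dual}. The inverse steps dual to \eqref{deg1} and \eqref{deg4} again reduce to the identity (their Bressoud matrices $\mathcal{N}$ and $\mathcal{M}$ collapse exactly when the forward steps do), so $P^{BC_n,\mathrm{III}}=P^{BC_n,\mathrm{IV}}$ and $E=P^{BC_n,\mathrm{I}}$, leaving only the inverses of \eqref{deg2} and \eqref{deg3}. Composing them gives
\[E_r(x)=\sum_{k\ge0}\widetilde{\mathcal{M}}_{r,r-2k}\sum_{j\ge0}\mathcal{N}_{r-2k,\,r-2k-j}\;P^{(B_n,B_n)}_{(1^{r-2k-j})}(x|t;q,t),\]
where $\widetilde{\mathcal{M}}$ carries the parameters $(t^{-2n+2}/c^4,t^{-2n-4},1/t^2,c^2/ta^2;t)$ and $\mathcal{N}$ the parameters $(t^{-n},-t^{-n+1}/a^2,t^{-n+1}/ac,-t^{-n+1}/ac,t^{-2n}/a^2c,t/c,-t/d;t)$ (the $x,y$ slots swapped relative to Proposition~\ref{compare}), all evaluated at the specialized parameters. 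Reading these off Definitions~\ref{Bressoud-2} and~\ref{Nkr} produces the $k$-summand and the $j$-summand respectively.

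The hard part will be this last reduction, since the raw Bressoud entries appear with negative powers $t^{-n}$, $t^{-2n}$, half-integer powers of $q$ coming from $a=q^{1/2}$, and the square-root factors $u^{1/2},v^{1/2}$ in $\widetilde{\mathcal{M}}$, whereas the theorem is written entirely in positive powers. I would clear these exactly as in the proof of Proposition~\ref{compare}, flipping each descending Pochhammer by $(Xt^{-i};t)_i=(-X/t)^i t^{-\binom{i}{2}}(X^{-1}t;t)_i$ together with its $t^2$-analogue, and then tracking the accumulated sign and power-of-$t$ prefactors. The bookkeeping is eased by two coincidences at this point: for the inverse $\mathcal{N}$ one has $x/y=(t/c)/(-t/d)=-d/c=t$, so the leading ratio $(x/y;t)_j/(t;t)_j$ is trivial and one Pochhammer pair drops out, while $y=-t/d=-1$ gives $y^j=(-1)^j$, explaining the sign in the $j$-summand. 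Verifying that the $\widetilde{\mathcal{M}}$-prefactor $(y u^{1/2}/v^{1/2})^k$ combines with the shifted $v^{1/2}$-factors to give the clean weight stated in the $k$-summand is the only genuinely delicate computation; everything else is formal.
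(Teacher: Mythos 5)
Your proposal is correct and is precisely the derivation the paper intends (Theorem~\ref{tqt} is stated without a printed proof): at $(a,b,c,d)=\big(q^{1/2},-q^{1/2},-1,t\big)$ the steps \eqref{deg1} and \eqref{deg4} of Theorem~\ref{dmat} and their duals collapse because $(-a/b;t)_i=(1;t)_i$ and $\big(1/c^2;t\big)_l=(1;t)_l$ vanish, so the first identity is the composition of \eqref{deg2} and \eqref{deg3} (equivalently the introduction's Corollary at $a=t$), and the second is the composition of the two surviving inverse steps of Theorem~\ref{dmat-dual}. Your final reduction also checks out: flipping the descending Pochhammers as in Proposition~\ref{compare} turns $\widetilde{\mathcal{M}}_{r,r-2k}\big(t^{-2n+2},t^{-2n-4};1/t^2,1/tq;t\big)$ and $\mathcal{N}_{r-2k,r-2k-j}$ into exactly the stated $k$- and $j$-summands, with all accumulated signs and powers of $t$ and $q^{1/2}$ cancelling.
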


\begin{cor} Setting $t=q$, we have the formula for the Schur polynomials
$s^{B_n}_{(1^r)}(x) = P_{(1^r)}^{(B_n, B_n)}(x|q;q,q)$:
\begin{gather*}
 s^{B_n}_{(1^r)}(x) = E_r(x) + E_{r-1}(x), \\
 E_r(x) = \sum_{j=0}^r (-1)^j s^{B_n}_{(1^{r-j})}(x).
\end{gather*}
Hence, from Lemma~{\rm \ref{Lem-Em}}, we have
\begin{gather*}
s^{B_n}_{(1^r)}(x) =
\sum_{j=0}^{\lfloor{r \over 2}\rfloor}{n-r+2j \atopwithdelims() j} m_{(1^{r-2j})}(x)
+
\sum_{j=0}^{\lfloor{r-1 \over 2}\rfloor}{n-r+2j+1 \atopwithdelims() j} m_{(1^{r-2j-1})}(x).
\end{gather*}
\end{cor}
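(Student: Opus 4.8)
The plan is to derive all three displayed identities by specializing Theorem~\ref{tqt} at $t=q$, exploiting the fact that this specialization forces almost all of the summation indices to vanish. The decisive observation is that setting $t=q$ produces $q$-shifted factorials of the form $(1;q^2)_k$ and $(q^{-1};q)_j$, whose first nontrivial factor is $(1-1)=0$; these annihilate all but finitely many terms of the double sums, leaving only a handful of explicitly computable coefficients. Thus the two heavy-looking formulas of Theorem~\ref{tqt} degenerate to essentially trivial relations, and the third identity is then pure bookkeeping via Lemma~\ref{Lem-Em}.

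For the first identity $s^{B_n}_{(1^r)}(x)=E_r(x)+E_{r-1}(x)$ I start from the first display of Theorem~\ref{tqt}. In the inner $k$-sum the leading factor $(t/q;t^2)_k$ becomes $(1;q^2)_k$ at $t=q$, which vanishes for $k\geq 1$; hence only $k=0$ survives and the inner sum collapses to $E_{r-j}(x)$. In the outer $j$-sum the factor $(1/t;t)_j$ becomes $(q^{-1};q)_j$, which vanishes for $j\geq 2$; hence only $j=0$ and $j=1$ contribute. The $j=0$ coefficient is manifestly $1$, giving $E_r(x)$. For $j=1$ a short calculation, using $(1-q^{n-r+1})(1+q^{n-r+1})=1-q^{2n-2r+2}$ to cancel the two even-power factors against the denominator, shows that the coefficient again reduces to $1$, giving $E_{r-1}(x)$.

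The second identity $E_r(x)=\sum_{j=0}^r(-1)^j s^{B_n}_{(1^{r-j})}(x)$ follows the same pattern from the second display of Theorem~\ref{tqt}. Here the $k$-sum carries the factor $(q/t;t^2)_k$, which once more becomes $(1;q^2)_k$ at $t=q$ and kills every $k\geq 1$, so only $k=0$ remains. For $k=0$ the surviving $j$-coefficient is the ratio $\bigl(t^{n-r+1},-t^{n-r}q,-t^{n-r}q^{1/2},t^{n-r}q^{1/2};t\bigr)_j\big/\bigl(t^{2n-2r}q;t\bigr)_{2j}$ evaluated at $t=q$; applying the elementary identities $(a;q)_j(-a;q)_j=(a^2;q^2)_j$ to the numerator pairs and $(a;q)_{2j}=(a;q^2)_j(aq;q^2)_j$ to the denominator collapses this ratio to $1$. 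Thus $E_r(x)=\sum_{j=0}^r(-1)^j P^{(B_n,B_n)}_{(1^{r-j})}(x|q;q,q)$, which is the assertion.

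Finally, the third identity is obtained simply by inserting Lemma~\ref{Lem-Em} for $E_r(x)$ and $E_{r-1}(x)$ into $s^{B_n}_{(1^r)}=E_r+E_{r-1}$ and relabelling the summation index, noting that $\binom{n-(r-1)+2k}{k}=\binom{n-r+2k+1}{k}$ produces exactly the second sum in the statement. I do not expect any genuine obstacle: once the vanishing of $(1;q^2)_k$ and $(q^{-1};q)_j$ is spotted, everything reduces to elementary $q$-shifted factorial manipulation, the only mildly delicate point being the explicit verification that the $j=1$ coefficient in the first identity equals exactly $1$.
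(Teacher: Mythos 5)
Your proposal is correct and follows exactly the route the paper intends: the corollary is stated as an immediate specialization of Theorem~\ref{tqt} at $t=q$, and your detailed verification (the vanishing of $(1;q^2)_k$ for $k\geq 1$ and of $(q^{-1};q)_j$ for $j\geq 2$, the $j=1$ coefficient collapsing to $1$ via $(1-q^{n-r+1})(1+q^{n-r+1})=1-q^{2n-2r+2}$, and the numerator/denominator cancellation $(q^{n-r+1};q)_j(-q^{n-r+1};q)_j(q^{n-r+1/2};q)_j(-q^{n-r+1/2};q)_j=(q^{2n-2r+1};q)_{2j}$) supplies precisely the omitted computations. The final step via Lemma~\ref{Lem-Em} matches the paper's own reduction to the monomial basis.
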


\subsection[Hall--Littlewood polynomials $P_{(1^r)}^{(B_n, B_n)}(x|t;0,t)$ and Kostka polynomials]{Hall--Littlewood polynomials $\boldsymbol{P_{(1^r)}^{(B_n, B_n)}(x|t;0,t)}$ and Kostka polynomials} \label{Kostka}

\begin{thm} Setting $q=0$, we have
\begin{gather*}
 P_{(1^r)}^{(B_n, B_n)}(x|t;0,t)
=\sum_{k=0}^{\lfloor{r \over 2}\rfloor}
(-1)^k t^{k^2}
{ [n-r+2k]_t
\over
[n-r]_t
}
\left[ n-r+k-1 \atop k \right]_{t^2}
E_{r-2k}(x) \notag \\
\qquad{} +\big(1-t^{n-r+1}\big)
\sum_{k=0}^{\lfloor{r-1 \over 2}\rfloor}
(-1)^k t^{k^2}
{ [n-r+2k+1]_t
\over
[n-r+1]_t
}
\left[ n-r+k \atop k \right]_{t^2}
E_{r-1-2k}(x), \\
 E_r(x) =
\sum_{k=0}^{\lfloor{r \over 2}\rfloor}
{ \big(t^{n-r+1}; t\big)_{2k}
\over
\big(t^{n-r}; t\big)_{2k} }
{ \big(t^{2n-2r}; t^2\big)_{2k}
\over
\big(t^2, t^{2n-2r+2}; t^2\big)_{k} }
t^k \sum_{j=0}^{r-2k}(-1)^j
\big(t^{n-r+2k+1}; t\big)_j
P^{(B_n, B_n)}_{(1^{r-2k-j})}(t; 0,t) \notag \\
\hphantom{E_r(x)}{} =
\sum_{l=0}^{r}\sum_{k=0}^{\lfloor{l \over 2}\rfloor}
{ \big(t^{2n-2r}; t^2\big)_{2k}
\over
\big(t^2; t^2\big)_{k} \big(t^{n-r}; t\big)_{2k} \big(t^{2n-2r+2}; t^2\big)_{k}
}
t^k
(-1)^l \big(t^{n-r+1}; t\big)_{l} P^{(B_n, B_n)}_{(1^{r-l})}(t; 0,t).
\end{gather*}
\end{thm}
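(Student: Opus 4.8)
The plan is to derive both displayed identities as the $q\to 0$ specialization of Theorem~\ref{tqt}, which already records the transition between $(E_r(x))$ and $\big(P^{(B_n,B_n)}_{(1^r)}(x|t;q,t)\big)$ for generic $q$. No new summation formula is needed; the entire task is to control the limit. For the expansion of $P^{(B_n,B_n)}_{(1^r)}(x|t;0,t)$ in the $E$-basis, I would first observe that the outer summation variable $j$ in the first formula of Theorem~\ref{tqt} carries the factor $(1/t;t)_j$, whose $i=1$ term equals $1-t^{0}=0$; hence $(1/t;t)_j=0$ for $j\ge 2$, and only $j=0,1$ survive, already at generic $q$. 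I would then send $q\to 0$ in the two remaining inner $k$-sums. The only singular factor is $(t/q;t^2)_k$, which is paired with the explicit $q^k$ so that $\lim_{q\to 0}(t/q;t^2)_k\,q^k=(-1)^k t^{k^2}$, while every other $q$-dependent symbol, namely $(t^{2n-2r+1+2j}q;t^2)_k$ inside and $(-t^{n-r}q;t)_j$, $(t^{2n-2r}q/t;t)_j$, $(t^{2n-2r+1}q;t)_j$, $(t^{2n-2r}q/t;t^2)_j$ in the outer coefficient, tends to $1$. In particular the $j=1$ outer coefficient reduces to $1-t^{n-r+1}$, which is exactly the prefactor of the second sum.

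Next I would simplify the surviving $k$-coefficient, which with $s=t^{n-r+1+j}$ equals $(-1)^k t^{k^2}\,\frac{(st;t^2)_k(s^2/t^2;t^2)_k}{(t^2;t^2)_k(s/t;t^2)_k}\,\frac{(s/t;t)_{2k}}{(s;t)_{2k}}\,\frac{1-st^{2k-1}}{1-st^{-1}}$, by two telescopings: $\frac{(s/t;t)_{2k}}{(s;t)_{2k}}=\frac{1-st^{-1}}{1-st^{2k-1}}$ cancels the last ratio, while $\frac{(st;t^2)_k}{(s/t;t^2)_k}=\frac{1-st^{2k-1}}{1-st^{-1}}$ and $\frac{(s^2/t^2;t^2)_k}{(t^2;t^2)_k}=\left[ n-r+k-1+j \atop k \right]_{t^2}$. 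The coefficient therefore collapses to $(-1)^k t^{k^2}\frac{[n-r+2k+j]_t}{[n-r+j]_t}\left[ n-r+k-1+j \atop k \right]_{t^2}$, which for $j=0$ and $j=1$ are precisely the stated coefficients of $E_{r-2k}(x)$ and $E_{r-1-2k}(x)$. This establishes the first formula.

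For the inverse expansion of $E_r(x)$ the limit is immediate: in the second formula of Theorem~\ref{tqt} every factor containing $q$ or $q^{1/2}$, i.e.\ $(q/t;t^2)_k$, $(t^{2n-2r-1}q;t^2)_k$, $(t^{2n-2r-1}q;t^2)_{2k}$, $(t^{2n-2r+4k}q;t)_{2j}$, $(-t^{n-r+2k}q;t)_j$ and $(\pm t^{n-r+2k}q^{1/2};t)_j$, tends to $1$, which yields the first displayed inverse identity verbatim. To reach the reorganized double sum I would substitute $l=2k+j$, so that $P^{(B_n,B_n)}_{(1^{r-2k-j})}=P^{(B_n,B_n)}_{(1^{r-l})}$, $(-1)^j=(-1)^l$, and for fixed $l\in\{0,\dots,r\}$ the index $k$ runs over $0\le k\le\lfloor l/2\rfloor$; then I would apply the Pochhammer splitting $(t^{n-r+1};t)_{2k}\,(t^{n-r+2k+1};t)_{l-2k}=(t^{n-r+1};t)_l$ to pull $(t^{n-r+1};t)_l$ out of the $k$-sum. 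The remaining $k$-dependent factors reassemble into the coefficient displayed in the second equality.

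The only delicate point — the main obstacle, such as it is — is confirming that the $q\to 0$ limit of the first formula is genuinely regular, i.e.\ that the pole of $(t/q;t^2)_k$ is cancelled exactly by $q^k$, and that the two telescopings match the base-$t$ and base-$t^2$ Pochhammer symbols termwise. Once the surviving indices $j\in\{0,1\}$ are isolated, the whole argument reduces to a finite and routine verification.
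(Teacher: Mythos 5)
Your proof is correct and follows essentially the same route as the paper, which obtains this theorem precisely by specializing Theorem~\ref{tqt} at $q=0$ (the paper states it as "Setting $q=0$" with no further argument). Your verifications — the vanishing $(1/t;t)_j=0$ for $j\ge 2$, the limit $(t/q;t^2)_k\,q^k\to(-1)^k t^{k^2}$, the telescoping of the Pochhammer ratios into $[\,\cdot\,]_t$ and $\left[\,\cdot\,\atop\cdot\,\right]_{t^2}$, and the substitution $l=2k+j$ with $(t^{n-r+1};t)_{2k}(t^{n-r+2k+1};t)_{l-2k}=(t^{n-r+1};t)_l$ — all check out and supply the details the paper leaves implicit.
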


\begin{dfn} Let $K^{B_n}_{(1^r) (1^{r-l})}(t)$ be the transition coefficient defined by
\begin{gather*}
s^{B_n}_{(1^r)}(x) = \sum_{l=0}^{r} K^{B_n}_{(1^r) (1^{r-l})}(t) P^{(B_n, B_n)}_{(1^{r-l})}(t; 0,t).
\end{gather*}
\end{dfn}
Then we have the following theorem.
\begin{thm} The $K^{B_n}_{(1^r) (1^{r-l})}(t)$ is
given by
\begin{gather*}
K^{B_n}_{(1^r) (1^{r-l})}(t) =
\sum_{k=0}^{\lfloor{l \over 2}\rfloor}
{ \big(t^{2n-2r}; t^2\big)_{2k} \big(t^{n-r+1}; t\big)_{l}
\over
\big(t^2; t^2\big)_{k} \big(t^{n-r}; t\big)_{2k} \big(t^{2n-2r+2}; t^2\big)_{k}
}
t^k
(-1)^l \notag \\
\hphantom{K^{B_n}_{(1^r) (1^{r-l})}(t) =}{}+
\sum_{k=0}^{\lfloor{l-1 \over 2}\rfloor}
{ \big(t^{2n-2r+2}; t^2\big)_{2k} \big(t^{n-r+2}; t\big)_{l-1}
\over
\big(t^2; t^2\big)_{k} \big(t^{n-r+1}; t\big)_{2k} \big(t^{2n-2r+4}; t^2\big)_{k}
}
t^k
(-1)^{l-1}.
\end{gather*}
\end{thm}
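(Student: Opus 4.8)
The plan is to obtain the Kostka coefficients by a direct coefficient extraction, feeding the expansion of $E_r(x)$ into Hall--Littlewood polynomials through the identity $s^{B_n}_{(1^r)}(x)=E_r(x)+E_{r-1}(x)$. The nontrivial input, namely the transition from the basis $(E_r)$ to the Hall--Littlewood basis $\big(P^{(B_n,B_n)}_{(1^{r-l})}(t;0,t)\big)$, is already supplied by the $q=0$ theorem above, so the remaining work is purely organisational.

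First I would invoke the last displayed expansion of $E_r(x)$ in the $q=0$ theorem, which I rewrite as
\begin{gather*}
E_r(x)=\sum_{l=0}^{r}(-1)^l\big(t^{n-r+1};t\big)_l
\Biggl(\sum_{k=0}^{\lfloor l/2\rfloor}
\frac{\big(t^{2n-2r};t^2\big)_{2k}}
{\big(t^2;t^2\big)_k\big(t^{n-r};t\big)_{2k}\big(t^{2n-2r+2};t^2\big)_k}t^k\Biggr)
P^{(B_n,B_n)}_{(1^{r-l})}(t;0,t).
\end{gather*}
Reading off the coefficient of $P^{(B_n,B_n)}_{(1^{r-l})}(t;0,t)$ gives exactly the first of the two sums in the assertion.

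Next I would apply the very same identity with $r$ replaced by $r-1$ to expand $E_{r-1}(x)$. Under this shift the parameters $t^{n-r}$, $t^{2n-2r}$, $t^{2n-2r+2}$, $t^{n-r+1}$ become $t^{n-r+1}$, $t^{2n-2r+2}$, $t^{2n-2r+4}$, $t^{n-r+2}$ respectively, and the sum runs over an index $l'$ with $0\le l'\le r-1$ attached to $P^{(B_n,B_n)}_{(1^{(r-1)-l')}}(t;0,t)$. To pick out the coefficient of $P^{(B_n,B_n)}_{(1^{r-l})}(t;0,t)$ I set $(r-1)-l'=r-l$, i.e.\ $l'=l-1$; substituting $l'=l-1$ produces precisely the second sum in the assertion, which is empty for $l=0$.

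Finally, combining the two contributions via $s^{B_n}_{(1^r)}(x)=E_r(x)+E_{r-1}(x)$ (the corollary to Theorem~\ref{tqt}) and comparing with the defining expansion $s^{B_n}_{(1^r)}(x)=\sum_{l=0}^{r}K^{B_n}_{(1^r)(1^{r-l})}(t)P^{(B_n,B_n)}_{(1^{r-l})}(t;0,t)$ shows that $K^{B_n}_{(1^r)(1^{r-l})}(t)$ equals the sum of the two extracted coefficients, which is the stated formula. There is no analytic obstacle, as the Hall--Littlewood expansion of $E_r(x)$ is already in hand; the only care required is bookkeeping, namely tracking the parameter substitutions under $r\mapsto r-1$, the index shift $l'=l-1$, and the boundary behaviour of the floor bounds $\lfloor l/2\rfloor$ and $\lfloor(l-1)/2\rfloor$ (in particular that the second sum vanishes at $l=0$, correctly yielding $K^{B_n}_{(1^r)(1^{r})}(t)=1$).
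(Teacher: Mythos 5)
Your proof is correct and is exactly the derivation the paper intends: the paper states this theorem without explicit proof precisely because it follows, as you show, by combining $s^{B_n}_{(1^r)}(x)=E_r(x)+E_{r-1}(x)$ with the Hall--Littlewood expansion of $E_r(x)$ from the preceding $q=0$ theorem (applied once as stated and once with $r\mapsto r-1$, $l'=l-1$). Your bookkeeping of the parameter shifts and the boundary case $l=0$ is accurate, so nothing is missing.
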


\begin{thm} We have
\begin{gather*}
K^{(B_n)}_{(1^r) (1^{r-l})}(t)
 =
\begin{cases} \displaystyle
t^L
\left[ n-r+2L \atop L \right]_{t^2},
 & l=2L , \vspace{1mm}\\
 \displaystyle
t^{L+n-r+1}
 \left[ n-r+2L+1 \atop L \right]_{t^2},
 & l=2L+1 .
\end{cases}
\end{gather*}
In particular, $K^{B_n}_{(1^r) (1^{r-l})}(t)$ is a polynomial in $t$ with nonnegative integral coefficients.
\end{thm}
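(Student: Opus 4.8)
The plan is to start from the explicit double-sum expression for $K^{B_n}_{(1^r)(1^{r-l})}(t)$ established just above and to collapse it to the claimed product form. Write $N:=n-r\ge 0$ and work throughout in the base $q:=t^2$. The first step is to simplify the two $q$-Pochhammer quotients. Using $1-t^{2N+2i}=(1-t^{N+i})(1+t^{N+i})$ one checks that $\big(t^{2N};t^2\big)_{2k}/\big(t^N;t\big)_{2k}=\big({-}t^N;t\big)_{2k}$ and, splitting into even and odd indices, $\big({-}t^N;t\big)_{2k}=\big({-}t^N;t^2\big)_k\big({-}t^{N+1};t^2\big)_k$; the second sum simplifies the same way with $N$ replaced by $N+1$. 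After this rewriting each inner sum becomes a terminating series
\[
\Sigma_1=\sum_{k\ge 0}\frac{\big({-}t^N;t^2\big)_k\big({-}t^{N+1};t^2\big)_k}{\big(t^2;t^2\big)_k\big(t^{2N+2};t^2\big)_k}\,t^k,\qquad
\Sigma_2=\sum_{k\ge 0}\frac{\big({-}t^{N+1};t^2\big)_k\big({-}t^{N+2};t^2\big)_k}{\big(t^2;t^2\big)_k\big(t^{2N+4};t^2\big)_k}\,t^k,
\]
that is, ${}_2\phi_1$ series in base $q=t^2$ with argument $z=t$ obeying the balancing relation $abz=c$ (for $\Sigma_1$, $ab=t^{2N+1}$ and $c=t^{2N+2}$), truncated at $\lfloor l/2\rfloor$ and $\lfloor (l-1)/2\rfloor$ respectively.

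Second, I would factor the common $q$-Pochhammer out of the two contributions. Since $\big(t^{N+1};t\big)_l=(1-t^{N+1})\big(t^{N+2};t\big)_{l-1}$, the formula for $K$ collapses to
\[
K^{B_n}_{(1^r)(1^{r-l})}(t)=(-1)^{\,l-1}\big(t^{N+2};t\big)_{l-1}\,\Delta_l,\qquad
\Delta_l:=\Sigma_2-(1-t^{N+1})\Sigma_1 .
\]
This isolates the whole problem in the evaluation of the single quantity $\Delta_l$, and it is here that the real work lies.

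Third --- and this is the step I expect to be the main obstacle --- I would evaluate $\Delta_l$. The difficulty is that the partial sums $\Sigma_1,\Sigma_2$ individually have no product form: the non-terminating $q$-Gauss summation applies to the full series, not to the genuine truncation at $\lfloor l/2\rfloor$ forced by the range of admissible indices, so one is compelled to use the specific combination $\Sigma_2-(1-t^{N+1})\Sigma_1$ rather than each sum separately. Concretely I would establish a short recursion for $\Delta_l$ in $l$ by a creative-telescoping argument on the $k$-th terms (writing $u_k,v_k$ for the summands of $\Sigma_1,\Sigma_2$, one looks at $v_k-(1-t^{N+1})u_k$), or equivalently argue by induction on $l$; the cancellation $(1-t^{N+1})(1+t^{N+1})=1-t^{2N+2}$ against the leading factor $(1-t^{2N+2})$ of $\big(t^{2N+2};t^2\big)_k$ is precisely what drives the telescoping, exactly as it already does in the cases $l\le 2$. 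An alternative, should the direct summation prove stubborn, is to deduce a recursion for the $K^{B_n}_{(1^r)(1^{r-l})}(t)$ from the branching rule of Theorem~\ref{main1} specialized to the Schur and Hall--Littlewood points and to verify that the claimed product form satisfies it.

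Finally, I would substitute the closed form for $\Delta_l$ back into the factored expression. Multiplying by $\big(t^{N+2};t\big)_{l-1}$ and repeatedly invoking $(1-t^{N+j})(1+t^{N+j})=1-t^{2N+2j}$ converts the surviving base-$t$ Pochhammers into products in base $t^2$, which assemble into the Gaussian binomial $\left[n-r+2L\atop L\right]_{t^2}$ or $\left[n-r+2L+1\atop L\right]_{t^2}$; the two parities of $l$ are handled separately, since the truncation bounds and hence the resulting binomials differ, and tracking the overall monomial factor produces the prefactor $t^{L}$ or $t^{L+n-r+1}$. The asserted positivity and integrality are then immediate: Gaussian binomial coefficients are polynomials in $t^2$ with nonnegative integer coefficients, and the prefactors are monomials with nonnegative exponents because $n-r\ge 0$.
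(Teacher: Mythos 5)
Your proposal is correct and follows essentially the same route as the paper: your factored form $K^{B_n}_{(1^r)(1^{r-l})}(t)=(-1)^{l-1}\big(t^{n-r+2};t\big)_{l-1}\Delta_l$ is precisely the paper's intermediate identity $K^{B_n}_{(1^r)(1^{r-l})}(t)=\big(t^{n-r+2};t\big)_{l-1}\bigl((-1)^l\sum_k\alpha_k+(-1)^{l-1}\sum_k\beta_k\bigr)$ (with $\sum_k\alpha_k=(1-t^{n-r+1})\Sigma_1$ and $\sum_k\beta_k=\Sigma_2$), and the paper evaluates this combination by exactly the induction on $l$ in steps of two that you sketch, the induction step being the closed-form evaluation of the boundary terms $\alpha_{L+1}-\beta_L$ together with the cancellation $(1-t^{n-r+j})(1+t^{n-r+j})=1-t^{2n-2r+2j}$ that you identify as the driving mechanism. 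The only differences are cosmetic: the paper does not rewrite the summands via $\big({-}t^{n-r};t\big)_{2k}$, and it, too, treats the odd case $l=2L+1$ as ``similar.''
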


\begin{proof} Set
\begin{gather*}
 \alpha_{k}
:=
{ \big(t^{2n-2r}; t^2\big)_{2k} \big(1-t^{n-r+1}\big)
\over
\big(t^2; t^2\big)_{k} \big(t^{n-r}; t\big)_{2k} \big(t^{2n-2r+2}; t^2\big)_{k}
}
t^k, \\
 \beta_{k}
:=
{ \big(t^{2n-2r+2}; t^2\big)_{2k}
\over
\big(t^2; t^2\big)_{k} \big(t^{n-r+1}; t\big)_{2k} \big(t^{2n-2r+4}; t^2\big)_{k}
}
t^k.
\end{gather*}
Then we have
\begin{gather}
K^{B_n}_{(1^r) (1^{r-l})}(t) =\big(t^{n-r+2} ;t\big)_{l-1}
\left( (-1)^l \sum_{k=0}^{\lfloor{l \over 2}\rfloor} \alpha_{k}+
(-1)^{l-1} \sum_{k=0}^{\lfloor{l-1 \over 2}\rfloor} \beta_{k} \right).\label{67}
\end{gather}
We shall prove (\ref{67}) for $l=2L$ by induction on $L$. We have
\begin{gather}
 K^{B_n}_{(1^r) (1^{r-(2L+2)})}(t) =\big(t^{n-r+2} ;t\big)_{2L+1} \left( \sum_{k=0}^{L+1} \alpha_{k}
-
\sum_{k=0}^{L} \beta_{k} \right) =
(t^{n-r+2} ;t)_{2L+1} (\alpha_{L+1} - \beta_{L}) \notag \\
\quad+
\big(1-t^{n-r+2L+1}\big)\big(1-t^{n-r+2L+2}\big) \left(
\big(t^{n-r+2} ;t\big)_{2L-1}
\left( \sum_{k=0}^{L} \alpha_{k}
-
\sum_{k=0}^{L-1} \beta_{k} \right) \right) \notag \\
 =
\big(t^{n-r+2} ;t\big)_{2L+1} (\alpha_{L+1} - \beta_{L})
+
\big(1-t^{n-r+2L+1}\big)\big(1-t^{n-r+2L+2}\big)
{ \big(t^{2n-2r+2L+2} ;t^2\big)_L
\over
\big(t^2 ;t^2\big)_L
} t^L \notag \\
 ={ \big(t^{2n-2r+2(L+1)+2} ;t^2\big)_{L+1}\over
\big(t^2 ;t^2\big)_{L+1} } t^{L+1}.\label{68}
\end{gather}
We can prove (\ref{68}) for $l=2L+1$ similarly.
\end{proof}

\section{Solution to the recursion relation}\label{SolOfRec}

We give a combinatorial expression of the entries $\mathcal{C}_{r,r+l}$
in the transition matrix $\mathcal{C}$ for $r, l \in \mathbb{Z}_{\geq 0}$.
In this section we set $f(s) =f(s|a,b,c,d)$ and $g(s) =g(s|a,b,c,d)$
for simplicity.

For $m,n \in \mathbb{Z}_{\geq 0}$ let us define the finite set
\begin{gather*}
\mathcal{P}_{m,n}^{(r)}= \{X_1 X_2 \cdots X_{m+n} \,|\,
X_i = f\big(t^{r-d_i}\big) \text{ or } g\big(t^{r-d_i}\big) \text{ for } 1 \leq i \leq m+n, \, d_i \in
\mathbb{Z}
\},
\end{gather*}
which satisfies three conditions:
\begin{itemize}\itemsep=0pt
\item[1)] $0 \leq d_1 \leq r$,
\item[2)] $| \{ i \,|\, X_i = f(t^{r-d_i}) \} |= m$ \text{ and }
$| \{ j \,|\,X_j = g(t^{r-d_j}) \} |= n$,
\item[3)] If $(X_i, X_{i+1}) =
\begin{cases}
\big(f\big(t^{r-d_i}\big),f\big(t^{r-d_{i+1}}\big)\big) \text{ or } \big(f\big(t^{r-d_i}\big),g\big(t^{r-d_{i+1}}\big)\big)& \text{then } d_i-1 \leq d_{i+1} \leq r, \\
\big(g\big(t^{r-d_{i}}\big),g\big(t^{r-d_{i+1}}\big)\big) \text{ or } \big(g\big(t^{r-d_{i}}\big),f\big(t^{r-d_{i+1}}\big)\big) & \text{then } d_i \leq d_{i+1} \leq r.
\end{cases}
$
\end{itemize}

\begin{thm} Let us define $\mathcal{C}_{0,0}=1$. For $r,k \in \mathbb{Z}_{\geq 0}$ we have
\begin{gather*}
 \mathcal{C}_{r,r+2k}
= \sum_{k_1,k_2 \in \mathbb{Z}_{\geq 0} \atop k_1 + k_2 =k}
\sum_{X \in \mathcal{P}_{k_1,2k_2}^{(r)}} X,
\qquad \mathcal{C}_{r,r+2k+1}
= \sum_{k_1,k_2 \in \mathbb{Z}_{\geq 0} \atop k_1 + k_2 =k}
\sum_{X \in \mathcal{P}_{k_1,2k_2+1}^{(r)}} X.
\end{gather*}
\end{thm}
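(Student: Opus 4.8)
The plan is to invoke the uniqueness assertion of Theorem~\ref{main2}: since $\mathcal{C}$ is the unique upper triangular matrix with unit diagonal obeying the three term recursion \eqref{rec3}, it suffices to verify that the right hand sides of the two asserted formulas satisfy the same conditions. To unify the even and odd cases I would assign to each word $X=X_1\cdots X_{m+n}\in\mathcal{P}_{m,n}^{(r)}$ the \emph{weight} $2m+n$ (each $f$-letter counting twice, each $g$-letter once), and set $D_{r,l}=\sum_{2m+n=l}\sum_{X\in\mathcal{P}_{m,n}^{(r)}}X$. A direct check of parities shows that the two stated formulas are exactly $\mathcal{C}_{r,r+2k}=D_{r,2k}$ and $\mathcal{C}_{r,r+2k+1}=D_{r,2k+1}$, so the theorem is equivalent to $\mathcal{C}_{r,r+l}=D_{r,l}$ for all $l\in\mathbb{Z}_{\geq 0}$ together with $D_{r,l}=0$ for $l<0$. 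Putting $i=r$, $j=r+l$, the recursion \eqref{rec3} becomes $\mathcal{C}_{r,r+l}=\mathcal{C}_{r-1,(r-1)+l}+g(t^r)\mathcal{C}_{r,r+(l-1)}+f(t^r)\mathcal{C}_{r+1,(r+1)+(l-2)}$, so the whole proof reduces to the single identity
\begin{gather*}
D_{r,l} = D_{r-1,l}+g\big(t^r\big)\,D_{r,l-1}+f\big(t^r\big)\,D_{r+1,l-2}.
\end{gather*}

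I would first dispatch the boundary data. For $l<0$ there are no words of negative weight, so $D_{r,l}=0$ and the matrix is upper triangular; for $l=0$ the only contribution is the empty word of $\mathcal{P}_{0,0}^{(r)}$, whose empty product is $1$, giving $D_{r,r}=1$ in accordance with \eqref{rec2}. These match the required triangularity and diagonal.

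The heart of the argument is a partition of the weight-$l$ words in $\mathcal{P}^{(r)}$ according to the value of $d_1$ and the type of $X_1$, each block being realized by an index shift $d_i\mapsto d_i\pm 1$ that rescales every argument $t^{r-d_i}$ correspondingly. If $d_1\geq 1$, shifting the entire word by $d_i\mapsto d_i-1$ turns condition~1) into $0\leq d_1-1\leq r-1$, preserves the counts in condition~2) and the lower bounds of condition~3) (which are differences), and lowers the upper bound $r$ to $r-1$; this is a weight preserving bijection onto $\mathcal{P}^{(r-1)}$, contributing $D_{r-1,l}$. If $d_1=0$ and $X_1=g(t^r)$, the transition rule forces $0\leq d_2\leq r$, so deleting $X_1$ leaves a word of $\mathcal{P}^{(r)}$ of weight $l-1$ with one fewer $g$, contributing $g(t^r)\,D_{r,l-1}$. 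If $d_1=0$ and $X_1=f(t^r)$, the transition rule gives $-1\leq d_2\leq r$; deleting $X_1$ and shifting the remainder by $d_i\mapsto d_i+1$ sends this range to $0\leq d_2\leq r+1$ and the upper bound $r$ to $r+1$, landing in $\mathcal{P}^{(r+1)}$ of weight $l-2$ with one fewer $f$, contributing $f(t^r)\,D_{r+1,l-2}$. Each map is reversible (prepend the relevant letter at $d_1=0$ and shift back), so the three blocks sum exactly to the displayed right hand side, and uniqueness in Theorem~\ref{main2} then forces $D_{r,l}=\mathcal{C}_{r,r+l}$.

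The hard part will be the careful bookkeeping of the boundary and transition conditions under these shifts, most delicately in the $X_1=f$ case, where the lower bound $d_2\geq-1$ is precisely what is needed to fill out all of $\mathcal{P}^{(r+1)}$ after the shift. One must check that the upper bound $r$ transforms to $r\pm 1$ consistently in every transition and that condition~1) of the target set matches the constraint imposed by the prepended letter, so that the three shift maps are genuine bijections rather than mere injections; granting these routine verifications, the recursion and the boundary data established above complete the proof.
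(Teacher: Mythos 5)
Your proof is correct. Note that the paper offers no proof of this theorem at all (Section~\ref{SolOfRec} consists only of the definition of $\mathcal{P}_{m,n}^{(r)}$ and the bare statement), so there is no argument of the authors to compare against; but your strategy --- verify upper triangularity, the unit diagonal, and the recursion \eqref{rec3} for the combinatorial sums, then invoke the uniqueness assertion of Theorem~\ref{main2} --- is evidently the intended one, the section being titled ``Solution to the recursion relation.'' Your three-case decomposition by the value of $d_1$ and the type of $X_1$, with the index shifts $d_i\mapsto d_i\mp1$ compensating the change $r\mapsto r\pm1$ so that every letter value $f\big(t^{r-d_i}\big)$, $g\big(t^{r-d_i}\big)$ --- and hence every product --- is preserved, checks out in all three cases, including at $r=0$, where the first block is empty in accordance with the convention $\mathcal{C}_{-1,j-1}=0$ needed to read \eqref{rec3} at $i=0$.
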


\appendix

\section{Appendix}

\subsection{Asymptotically free eigenfunctions of Askey--Wilson polynomials}

Let $a,b,c,d,q \in \bbC $ be parameters with the condition $|q|<1$.
Let $D$ denote the Askey--Wilson $q$-difference operator~\cite{AW}
\begin{gather*}
D ={(1-ax)(1-bx)(1-cx)(1-dx)\over \big(1-x^2\big)\big(1-q x^2\big)} \big(T_{q,x}^{+1}-1\big)\\
\hphantom{D =}{} +{(1-a/x)(1-b/x)(1-c/x)(1-d/x)\over \big(1-1/x^2\big)\big(1-q /x^2\big)} \big(T_{q,x}^{-1}-1\big)\nonumber,
\end{gather*}
where the $q$-shift operators are defined by $T_{q,x}^{\pm1}f(x)=f\big(q^{\pm 1}x\big)$.
Recall the fundamental facts about the Askey--Wilson polynomial $p_n(x;a,b,c,d|q)$ ($n\in \bbZ_{\geq 0}$).
It is a symmetric Laurent polynomial in $x$ and characterized by the two conditions:
(i)~$p_n(x)$ has the highest degree $n$, (ii)~$p_n(x)$~is an eigenfunction of the operator $D$.
Askey--Wilson's celebrated formula reads~\cite{AW}
\begin{gather*}
p_n(x)=a^{-n}(ab,ac,ad;q)_n\,
{}_4\phi_3\left[ {q^{-n}, abcdq^{n-1},ax,a/x\atop ab,ac,ad};q,q \right],\\ 
D p_n(x)= \left(q^{-n}+qbcdq^{n-1}-1-abcdq^{-1}\right)p_n(x). 
\end{gather*}

Let $s\in \bbC$ be a parameter. Introduce $\lambda$ satisfying $s=q^{-\lambda}$.
Then we have $T_{q,x} x^{-\lambda} =s x^{-\lambda}$.
Let $f(x;s)=f(x;s|a,b,c,d|q)$ be a formal series in $x$
\begin{gather*}
 f(x;s)=x^{-\lambda} \sum_{n\geq 0} c_n x^{n},\qquad c_0\neq 0,
\end{gather*}
satisfying the $q$-difference equation
\begin{gather}
 D f(x;s)=\left(s+{abcd\over qs}-1-{abcd\over q}\right)f(x;s). \label{Df}
\end{gather}
With the normalization $c_0=1$, (\ref{Df})
determines the coefficients $c_n=c_n(s|a,b,c,d|q)$ uniquely as rational functions in $a$, $b$, $c$, $d$, $q$ and $s$.

\begin{dfn}[\cite{HNS}] Set $c_e(k,l;s)=c_e(k,l;s|a,c|q)$ and $c_o(i,j;s)=c_o(i,j;s|a,b,c,d|q)$ by
\begin{gather*}
c_e(k,l;s) =
{\big(a^2;q^2\big)_k \big(q^{4l}s^2;q^2\big)_k \over \big(q^2;q^2\big)_k \big(q^{4l} q^2 s^2/a^2;q^2\big)_k}\big(q^2/a^2\big)^k\nonumber\\\
\hphantom{c_e(k,l;s) =}{} \times
{\big(c^2/q;q^2\big)_l \big(s^2/a^2;q^2\big)_l \over
\big(q^2;q^2\big)_l \big(q^3 s^2/a^2c^2;q^2\big)_l}
{(s;q)_{2l} \big(q^2s^2/a^4;q^2\big)_{2l} \over\big(qs/a^2;q\big)_{2l} \big(s^2/a^2;q^2\big)_{2l}}\big(q^2/c^2\big)^l,\\ 
c_o(i,j;s) =
{(-b/a;q)_i(s;q)_i(qs/cd;q)_i\big(qs^2/a^2c^2;q\big)_i\over
(q;q)_i\big(q^2s^2/abcd;q\big)_i\big(qs^2/a^2c^2;q^2\big)_i}(q/b)^i\nonumber\\
\hphantom{c_o(i,j;s) =}{} \times
{(-d/c;q)_j\big(q^i s;q\big)_j(qs/ab;q)_j\big({-}q^i qs/ac;q\big)_j \big(q^i qs^2/a^2c^2;q\big)_j \over
(q;q)_j\big(q^i q^2s^2/abcd;q\big)_j (-qs/ac;q)_j\big(q^{2i}qs^2/a^2c^2;q^2\big)_j}(q/d)^j.
\end{gather*}
\end{dfn}

\begin{thm}[\cite{HNS}]\label{main}Let $s\in \bbC$ be generic. We have
\begin{gather*}
f(x;s)=x^{-\lambda} \sum_{k,l,i,j\geq 0} c_e\big(k,l;q^{i+j}s\big) c_o(i,j;s)x^{2k+2l+i+j}.
\end{gather*}
\end{thm}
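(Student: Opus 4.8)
The plan is to use the characterization of $f(x;s)$ as the unique formal solution of the eigenvalue equation (\ref{Df}) normalized by $c_0=1$. Write $E(s)=s+abcd/(qs)-1-abcd/q$ for the eigenvalue and $A(x)=(1-ax)(1-bx)(1-cx)(1-dx)/\big((1-x^2)(1-qx^2)\big)$, so that $D=A(x)\big(T_{q,x}-1\big)+A(1/x)\big(T_{q,x}^{-1}-1\big)$. Both $A(x)$ and $A(1/x)=(x-a)(x-b)(x-c)(x-d)/\big((x^2-1)(x^2-q)\big)$ are analytic at $x=0$ and expand as formal power series in $x$, with $A(0)=1$ and $A(1/x)\to abcd/q$ as $x\to 0$. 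Substituting $f=x^{-\lambda}\sum_{n\geq 0}c_nx^n$ into (\ref{Df}) and collecting the coefficient of $x^{-\lambda+N}$ gives a recursion whose diagonal coefficient is $s(q^N-1)+(abcd/qs)(q^{-N}-1)$; this is nonzero for every $N\geq 1$ when $s$ is generic, so $c_0=1$ determines all $c_n$ uniquely. This settles existence and uniqueness of $f$.

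Next, denote by $\tilde f(x;s)$ the right-hand side of Theorem~\ref{main}. Since every Pochhammer symbol of index $0$ equals $1$, one has $c_e(0,0;s)=c_o(0,0;s)=1$, so $\tilde f$ has the correct leading coefficient, and it suffices to show that $\tilde f$ solves (\ref{Df}). Rather than verify the infinite-term recursion above, I would first clear denominators: multiplying (\ref{Df}) by $(1-x^2)(1-qx^2)(1-1/x^2)(1-q/x^2)$ produces a $q$-difference equation with Laurent-polynomial coefficients of degrees between $-4$ and $4$, so comparing the coefficient of each $x^{-\lambda+N}$ yields a finite (at most nine-term) relation among the $c_n$. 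Establishing Theorem~\ref{main} thus reduces to checking that the explicit fourfold sum satisfies this finite relation.

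The main obstacle is precisely this last verification. My strategy is to exploit the factorized structure of the summand, namely that $c_e(k,l;\cdot)$ splits into a $k$-part and an $l$-part and $c_o(i,j;\cdot)$ into an $i$-part and a $j$-part, exactly as in the nested form of Lemma~\ref{lem-1}. One can then carry out the four sums one index at a time, each being an elementary basic hypergeometric summation, and reduce the eigenvalue identity to a finite set of contiguous relations for ${}_{r+1}\phi_r$ series. I expect the decisive identities to be of the same type already used in the body of the paper, namely Watson's transformation between ${}_8W_7$ and ${}_4\phi_3$ series and the $q$-Pfaff--Saalsch\"utz summation; the bookkeeping that arranges the four nested sums so that these identities apply cleanly is the technically delicate point. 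As a guiding heuristic, the structural parallel between this fourfold formula and the Koornwinder expansion of Theorem~\ref{HSnew} suggests that the four index directions correspond to a stepwise \emph{dressing} of the free solution $x^{-\lambda}$, turning on the parameters one at a time, and one could alternatively organize the argument as four intertwining lemmas along such a degeneration.
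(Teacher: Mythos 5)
Your framework is sound as far as it goes: the uniqueness argument for the formal solution (the diagonal coefficient $s\big(q^N-1\big)+(abcd/qs)\big(q^{-N}-1\big)$ is indeed nonvanishing for all $N\geq 1$ when $s$ is generic), the observation that $c_e(0,0;s)=c_o(0,0;s)=1$, and the reduction to a finite-term recursion with Laurent-polynomial coefficients after clearing denominators are all correct. Be aware, however, that the paper you are working from contains no proof of Theorem~\ref{main} at all: it is quoted from \cite{HNS}, and the actual proof lives in that reference. More importantly, your proposal stops exactly where that proof has to begin. Everything up to your final paragraph is a restatement of the problem (``it suffices to check that the explicit series satisfies equation~(\ref{Df})''); the final paragraph, which is where the entire content of the theorem lies, contains no argument, only the expectation that the verification will follow from Watson's transformation and the $q$-Pfaff--Saalsch\"utz summation, with the ``bookkeeping'' deferred. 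That bookkeeping is not routine, and your suggestion that the four sums can be carried out ``one index at a time, each being an elementary basic hypergeometric summation'' is where the plan is likely to break down as stated: the even part $c_e$ and the odd part $c_o$ are coupled nontrivially, since the argument of $c_e$ in the summand is $q^{i+j}s$, shifted by the odd indices, so the recursion does not decouple naively along the four index directions.

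For calibration of how much is missing: in the body of this very paper, the analogous interaction between an explicit fourfold sum and a difference/recursion structure is achieved only through the four- and five-term contiguity relations of Section~\ref{Ftr} (Theorem~\ref{str}, Lemma~\ref{nrel}), each established by a nontrivial direct calculation, followed by careful telescoping in the proof of Theorem~\ref{strBthm}. An honest completion of your plan would require the Askey--Wilson analogues of those relations, stated and proved, or else the parameter-by-parameter construction used in \cite{HNS} itself (your closing heuristic about ``dressing'' $x^{-\lambda}$ one parameter at a time is a reasonable guess at that structure, and Watson's ${}_8W_7$-to-${}_4\phi_3$ transformation does appear in this circle of arguments, cf.\ Proposition~\ref{prop1}). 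As it stands, your submission is a correct and natural proof strategy, but it is a plan rather than a proof: the decisive computational step is asserted to exist rather than carried out.
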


Here, we can rewrite above theorem as follows.

\begin{thm}\label{HSnewAW} Let $s\in \bbC$ be generic. We have
\begin{gather*}
f(x;s)=x^{-\lambda} \sum_{k,l,i,j\geq 0} c_e\big(k,l;q^{i+j}s\big) c_o^{\rm new}(i,j;s)x^{2k+2l+i+j},
\end{gather*}
where
\begin{gather*}
{c_o^{\rm new}} (i,j;s)=
{
\big({-}b/a, s, qs/ac, qs/ad, qs/cd, -q s^2/a^2cd; q\big)_i
\over
\big(q, q^2s^2/abcd; q\big)_i \big({-}qs^2/a^2cd, -q^2s^2/a^2cd; q^2\big)_i
}
(q/b)^i \notag \\
\hphantom{{c_o^{\rm new}} (i,j;s)=}{} \times {
\big({-}d/c, q^i s, -q^i q s /a^2, q^{2i}q s^2 /a^2c^2; q\big)_j
\over
\big(q, -q^{2i}q^2s^2 /a^2cd; q\big)_j \big(q^{2i} qs^2 /a^2c^2; q^2\big)_j
}
(q/d)^j.
\end{gather*}
\end{thm}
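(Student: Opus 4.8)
The plan is to mimic precisely the proof of Theorem~\ref{HSnew}, replacing the Koornwinder coefficients $\widehat{c_o}$, $\widehat{c_o}^{\rm new}$ by their Askey--Wilson counterparts $c_o$, $c_o^{\rm new}$ and the fourfold formula of Theorem~\ref{HS} by that of Theorem~\ref{main}. The key structural observation is that in the expansion of Theorem~\ref{main} the factor $c_e(k,l;q^{i+j}s)$ and the monomial $x^{2k+2l+i+j}$ depend on the indices $i$, $j$ only through their sum $m=i+j$. Hence, collecting the terms with $i+j=m$ fixed, everything reduces to establishing the diagonal summation identity
\begin{gather}
\sum_{i=0}^m c_o^{\rm new}(i,m-i;s) = \sum_{i=0}^m c_o(i,m-i;s), \label{AWdiag}
\end{gather}
the Askey--Wilson analogue of Proposition~\ref{prop1}. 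Granting \eqref{AWdiag}, I would reorganize the right-hand side of Theorem~\ref{main} as
\[
f(x;s)=x^{-\lambda}\sum_{k,l\geq 0}\sum_{m\geq 0} c_e(k,l;q^m s)\, x^{2k+2l+m}\sum_{i=0}^m c_o(i,m-i;s),
\]
substitute \eqref{AWdiag} in the innermost sum, and split $m$ back into $i+j$, which immediately produces the asserted formula with $c_o^{\rm new}$ in place of $c_o$.

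To prove \eqref{AWdiag}, I would follow the two-sided computation used for Proposition~\ref{prop1}. Writing out $\sum_{i=0}^m c_o^{\rm new}(i,m-i;s)$, substituting $j=m-i$, and pulling out all factors independent of the summation index $i$, the remaining sum collapses into a single very-well-poised ${}_8W_7$ series in base $q$ (the analogue of \eqref{8phi7}); the well-poised parameters are read off directly from the numerator and denominator Pochhammer symbols of $c_o^{\rm new}$. Applying Watson's transformation formula \cite[p.~43, equation~(2.5.1)]{GR} converts this ${}_8W_7$ into a balanced ${}_4\phi_3$. Performing the same manipulation on $\sum_{i=0}^m c_o(i,m-i;s)$ yields directly a ${}_4\phi_3$ (the analogue of \eqref{co_4phi3}), and comparing the two ${}_4\phi_3$ series together with their prefactors establishes \eqref{AWdiag}.

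The main obstacle is entirely the hypergeometric bookkeeping in identifying the ${}_8W_7$ and matching prefactors: the coefficient $c_o^{\rm new}$ mixes Pochhammer symbols in bases $q$ and $q^2$, so one must carefully separate the base-$q^2$ factors and track the shifted arguments $q^i s$, $-q^{i}qs/a^2$, $q^{2i}qs^2/a^2c^2$ through Watson's transformation. No analytic input beyond Watson's formula is required, and the computation runs in exact parallel to the Koornwinder case of Proposition~\ref{prop1} under the substitution $t\to q$; the only real risk is a transcription slip in the lengthy but routine algebra.
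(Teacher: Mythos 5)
Your proposal is correct and is exactly the argument the paper intends (the paper states Theorem~\ref{HSnewAW} as a ``rewrite'' of Theorem~\ref{main} without writing out details, the proof being the verbatim analogue of Proposition~\ref{prop1} and the proof of Theorem~\ref{HSnew}): collect terms with $i+j=m$ fixed, which is legitimate since $c_e\big(k,l;q^{i+j}s\big)$ and $x^{2k+2l+i+j}$ depend on $i$, $j$ only through $i+j$, and prove the diagonal identity via Watson's ${}_8W_7\to{}_4\phi_3$ transformation. As a labor-saving remark, you could avoid redoing Watson entirely by observing that $c_o$ and $c_o^{\rm new}$ are obtained from $\widehat{c}_o$ and $\widehat{c_o}^{\rm new}$ by the parameter substitution $(a,b,c,d,t)\to(q/a,q/b,q/c,q/d,q)$ (the same substitution the paper invokes to relate $g_1$ to $c_o^{\rm new}$), so your identity \eqref{AWdiag} is literally Proposition~\ref{prop1} in disguise.
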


For (\ref{g1}) in Definition \ref{DefOffg}, note that we have
\begin{gather*}
g_1(s) = \big({c_o^{\rm new}} (1,0;s) +{c_o^{\rm new}} (0,1;s)\big)|_{a=t/a,b=t/b,c=t/c,d=t/d,q=t},
\end{gather*}
where ${c_o^{\rm new}} (1,0;s) +{c_o^{\rm new}} (0,1;s)$ is the coefficient of $x^{-\lambda+1}$ in $f(x;s)$.

\subsection[Conjecture concerning $B_n$ $q$-Toda eigenfunctions]{Conjecture concerning $\boldsymbol{B_n}$ $\boldsymbol{q}$-Toda eigenfunctions}
In this appendix, we present a conjecture for the asymptotically free eigenfunctions for the $B_n$ $q$-Toda operator,
which can be regarded as a branching formula from the $B_n$ $q$-Toda eigenfunction restricted to the $A_{n-1}$ $q$-Toda eigenfunctions.

\subsubsection[$A_{n-1}$ case]{$\boldsymbol{A_{n-1}}$ case}
First we briefly recall the asymptotically free eigenfunctions for $A_{n-1}$ $q$-Toda operator.
Let $x=(x_1,\ldots,x_n)$ and $s=(s_1,\ldots,s_n)$ be a pair of $n$-tuples of indeterminates.
Let $\delta=(n-1,\allowbreak n-2,\ldots,0)$, and write $t^{\delta}u= \big(t^{n-1}u_1,t^{n-2}u_2,\ldots,u_n\big)$ etc. for short.
Let
\begin{gather*}
D^{A_{n-1}}(x|q,t)=\sum_{i=1}^n\prod_{j\neq i}{t x_i-x_j \over x_i-x_j}T_{q,x_i},
\end{gather*}
be the Macdonald operator of type $A_{n-1}$.
\begin{dfn}
Set
\begin{gather*}
\mathsf{D}^{A_{n-1}}(x|s|q,t)=x^{-\lambda}D^{A_{n-1}}(x|q,t)x^\lambda=\sum_{i=1}^n s_i \prod_{j<i}{1-t x_i/x_j \over 1-x_i/x_j}
\prod_{k> i}{1- x_k/tx_i \over 1-x_k/x_i}T_{q,x_i},
\end{gather*}
where $x^\lambda=\prod_ix_i^{\lambda_i}$, and $s=q^\lambda t^\delta$ namely $s_i=q^{\lambda_i} t^{n-i}$.
\end{dfn}
\begin{dfn}
Set
\begin{gather*}
\mathsf{D}^{A_{n-1}{\rm Toda}}(x|s|q)=\sum_{i=1}^{n-1} s_i (1-x_{i+1}/x_i)T_{q,x_i}+s_n T_{q,x_n}.
\end{gather*}
\end{dfn}
\begin{prp}
We have
\begin{gather*}
\mathsf{D}^{A_{n-1}{\rm Toda}}(x|s|q)=\lim_{t\rightarrow 0} \mathsf{D}^{A_{n-1}}\big(t^{-\delta}x|s|q,t\big).
\end{gather*}
\end{prp}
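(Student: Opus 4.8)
The plan is to substitute the rescaled variables $y_i=t^{-(n-i)}x_i$ into the conjugated Macdonald operator $\mathsf{D}^{A_{n-1}}(y|s|q,t)$ and to evaluate the $t\to 0$ limit of each of its $n$ summands separately. The key preliminary remark is that, since $y_i=t^{-(n-i)}x_i$ differs from $x_i$ only by the constant factor $t^{-(n-i)}$, the $q$-shift operators are unchanged under the substitution, $T_{q,y_i}=T_{q,x_i}$. Hence all of the $t$-dependence is carried by the two rational products multiplying each $T_{q,x_i}$, and it is enough to compute the limit of these coefficients index by index.

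I would then track the powers of $t$ in the two products. For an index $j<i$ one has $y_i/y_j=t^{\,i-j}x_i/x_j$ with $i-j\geq 1$, so
\begin{gather*}
\frac{1-t\,y_i/y_j}{1-y_i/y_j}=\frac{1-t^{\,i-j+1}x_i/x_j}{1-t^{\,i-j}x_i/x_j}\longrightarrow 1\qquad(t\to 0),
\end{gather*}
and the whole product over $j<i$ tends to $1$. For an index $k>i$ one computes $y_k/(t\,y_i)=t^{\,k-i-1}x_k/x_i$ and $y_k/y_i=t^{\,k-i}x_k/x_i$, whence
\begin{gather*}
\frac{1-y_k/(t\,y_i)}{1-y_k/y_i}=\frac{1-t^{\,k-i-1}x_k/x_i}{1-t^{\,k-i}x_k/x_i}.
\end{gather*}
Because $k-i\geq 1$ the denominator always tends to $1$; the numerator likewise tends to $1$ as soon as $k-i\geq 2$, while for the nearest-neighbour index $k=i+1$ the exponent $k-i-1$ equals $0$ and the surviving factor is $1-x_{i+1}/x_i$. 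Thus the product over $k>i$ tends to $1-x_{i+1}/x_i$ for $i<n$, and to the empty product $1$ for $i=n$.

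Assembling the summands, the $i$-th term converges to $s_i\,(1-x_{i+1}/x_i)\,T_{q,x_i}$ for $1\leq i\leq n-1$ and to $s_n\,T_{q,x_n}$ for $i=n$, which is precisely $\mathsf{D}^{A_{n-1}{\rm Toda}}(x|s|q)$. The only delicate point — and the sole place the argument could go astray — is the bookkeeping of the exponent $k-i-1$ in the $k>i$ product: it is exactly the extra $-1$ coming from the $t$ in the denominator of $y_k/(t\,y_i)$ that singles out the neighbour $k=i+1$, for which the factor survives as $1-x_{i+1}/x_i$ rather than collapsing to $1$. Since each coefficient is a rational function of $t$ whose limit has just been identified, the termwise limit is legitimate and the proposition follows.
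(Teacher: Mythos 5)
Your computation is correct and is precisely the routine verification the paper has in mind: the paper states this proposition without proof, since it follows from exactly the termwise limit you carry out (the substitution $y_i=t^{-(n-i)}x_i$ gives $y_i/y_j=t^{i-j}x_i/x_j$, all factors tend to $1$ except the nearest-neighbour numerator $1-x_{i+1}/x_i$, and $T_{q,y_i}=T_{q,x_i}$ because the rescaling is by a constant). Your bookkeeping of the exponent $k-i-1$, which isolates the surviving factor at $k=i+1$, is exactly right.
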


\begin{dfn}Let $\mathsf{M}^{(n)}$ be the set of strictly upper triangular matrices with nonnegative integer entries:
$\mathsf{M}^{(n)}=\{\theta=(\theta_{i,j})_{1\leq i,j\leq n}\,|\,
\theta_{i,j}\in {\mathbb{Z}_{\geq 0}},\theta_{i,j}=0 \mbox{ if }i\geq j\}$. Set
\begin{gather*}
c_n(\theta;s_1,\ldots,s_n;q,t)=
\prod_{k=2}^{n}
\prod_{1\leq i\leq j\leq k-1}
{\Big(q^{\sum\limits_{a=k+1}^n(\theta_{i,a}-\theta_{j+1,a})}t s_{j+1}/s_i;q\Big)_{\theta_{i,k}}\over
\Big(q^{\sum\limits_{a=k+1}^n(\theta_{i,a}-\theta_{j+1,a})}qs_{j+1}/s_i;q\Big)_{\theta_{i,k}}}\\
\hphantom{c_n(\theta;s_1,\ldots,s_n;q,t)=}{}\times
{\Big(q^{-\theta_{j,k}+\sum\limits_{a=k+1}^n(\theta_{i,a}-\theta_{j,a})}qs_j/ts_i;q\Big)_{\theta_{i,k}}\over
\Big(q^{-\theta_{j,k}+\sum\limits_{a=k+1}^n(\theta_{i,a}-\theta_{j,a})}s_j/s_i;q\Big)_{\theta_{i,k}}}.
\end{gather*}
\end{dfn}

\begin{dfn}Define $f^{A_{n-1}}(x|s|q,t)\in \mathbb{Q}(s,q,t)[[x_2/x_1,\ldots ,x_n/x_{n-1}]]$ by
\begin{gather*}
 f^{A_{n-1}}(x|s|q,t)
= \sum_{\theta\in\mathsf{M}^{(n)}}c_n(\theta;s;q,t)
\prod_{1\leq i<j \leq n} (x_j/x_i)^{\theta_{i,j}}.
\end{gather*}
\end{dfn}

\begin{prp}[\cite{BFS, NS}]
Let $\lambda=(\lambda_1,\ldots,\lambda_n)\in \mathbb{C}^n$, and set $s=t^{\delta} q^{\lambda}$ $\big(s_i=t^{n-i}q^{\lambda_i}\big)$.
Then we have
\begin{gather*}
\mathsf{D}^{A_{n-1}}(x|s|q,t) f^{A_{n-1}}(x|s|q,t)= \sum_{i=1}^n s_i f^{A_{n-1}}(x|s|q,t).
\end{gather*}
\end{prp}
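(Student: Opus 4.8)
The strategy is to remove the gauge factor so that the statement becomes an eigenvalue equation for the Macdonald operator, to observe that such an equation has a \emph{unique} asymptotically free solution determined by a triangular recursion, and then to verify that the explicit product $c_n(\theta;s;q,t)$ solves that recursion. The last step is where the real work lies. First I would note that, since $\mathsf{D}^{A_{n-1}}(x|s|q,t)=x^{-\lambda}D^{A_{n-1}}(x|q,t)x^{\lambda}$, the assertion is equivalent to
\begin{gather*}
D^{A_{n-1}}(x|q,t)\bigl(x^{\lambda}f^{A_{n-1}}(x|s|q,t)\bigr)=\Bigl(\textstyle\sum_{i=1}^n s_i\Bigr)\,x^{\lambda}f^{A_{n-1}}(x|s|q,t).
\end{gather*}
Thus $x^{\lambda}f^{A_{n-1}}$ is an asymptotically free eigenfunction of the Macdonald operator carrying precisely the eigenvalue $\sum_i s_i=\sum_i t^{n-i}q^{\lambda_i}$ of the Macdonald polynomial $P_{\lambda}$, which frames the whole computation and fixes the target eigenvalue via the classical eigenvalue formula.

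Next I would set up the uniqueness that reduces the problem to a coefficient identity. Working in $\mathbb{Q}(s,q,t)[[x_2/x_1,\ldots,x_n/x_{n-1}]]$, grade the monomial $\prod_{i<j}(x_j/x_i)^{\theta_{i,j}}$ by its total degree $|\theta|=\sum_{i<j}(j-i)\theta_{i,j}$ in the generating ratios $x_{k+1}/x_k$. Writing $D_m(\theta)=\sum_{i<m}\theta_{i,m}-\sum_{j>m}\theta_{m,j}$ for the net $x_m$-degree, the shift $T_{q,x_m}$ multiplies this monomial by $q^{D_m(\theta)}$, while each rational prefactor $\tfrac{1-tx_m/x_j}{1-x_m/x_j}$ $(j<m)$ and $\tfrac{1-x_k/tx_m}{1-x_k/x_m}$ $(k>m)$ expands, in the small ratios, into $1$ plus strictly higher-degree terms. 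Hence $\mathsf{D}^{A_{n-1}}-\sum_i s_i$ maps degree $d$ to degree $\ge d$, and reading off the coefficient of $\theta$ in the eigenvalue equation yields
\begin{gather*}
c_n(\theta)\sum_{m=1}^n s_m\bigl(q^{D_m(\theta)}-1\bigr)+\bigl(\text{contributions from }\theta'\text{ with }|\theta'|<|\theta|\bigr)=0.
\end{gather*}
For generic $s$ the diagonal factor $\sum_m s_m(q^{D_m(\theta)}-1)$ vanishes only at $\theta=0$, so the normalization $c_n(0)=1$ determines every coefficient uniquely; it therefore suffices to check that the explicit product formula satisfies this recursion.

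To carry out the verification I would make the recursion fully explicit: the ``contributions from $\theta'$'' come from choosing, for each factor in the two products above, either the leading term or the subleading term produced by the geometric expansion, so that the off-diagonal part of the recursion expresses $c_n(\theta)$ through the values $c_n(\theta')$ where $\theta'$ differs from $\theta$ by lowering one entry $\theta_{i,j}$ (together with the accompanying $t$- and $q$-weights read off from the expanded prefactors and from $q^{D_m(\theta)}$).

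The main obstacle is the final identity. Because $c_n(\theta;s;q,t)$ factorizes as a nested product of $q$-shifted factorials over $k=2,\ldots,n$ and over pairs $1\le i\le j\le k-1$, every ratio $c_n(\theta)/c_n(\theta')$ is an explicit quotient of Pochhammer symbols, and substituting these into the recursion collapses it to a terminating balanced (Pieri-type) $q$-hypergeometric summation. I expect proving this summation to be the hard part, and the cleanest route is induction on $n$ that exploits the outermost product: peeling off the $k=n$ layer corresponds to the branching $A_{n-1}\downarrow A_{n-2}$, reducing the rank-$n$ identity to the already-established rank-$(n-1)$ statement together with a single rank-one sum. The base case $n=2$ is a direct Pochhammer computation that amounts to the $q$-binomial theorem (equivalently the $q$-Gauss/$A_1$ Pieri rule), and the inductive step should follow by matching the $k=n$ factors against the rank-one summation.
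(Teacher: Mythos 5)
The first thing to say is that the paper contains no proof of this proposition: it is recalled in the appendix and attributed to \cite{BFS, NS}, so there is no in-paper argument to compare against, and your attempt must stand on its own. Its sound parts are the framing steps: the identity $\mathsf{D}^{A_{n-1}}(x|s|q,t)=x^{-\lambda}D^{A_{n-1}}(x|q,t)x^{\lambda}$ is the paper's own definition, the operator does act on $\mathbb{Q}(s,q,t)[[x_2/x_1,\ldots,x_n/x_{n-1}]]$ triangularly with respect to total degree, and for generic $s$ the diagonal factor $\sum_m s_m\big(q^{D_m}-1\big)$ vanishes only on the constant term, so an asymptotically free eigenfunction with eigenvalue $\sum_i s_i$ and constant term $1$ is unique. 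That reduction is standard and correct.

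The genuine gap is in your passage from the eigenvalue equation to a recursion indexed by $\theta$. The monomials $\prod_{i<j}(x_j/x_i)^{\theta_{i,j}}$ are \emph{not} linearly independent: the monomial depends on $\theta$ only through the exponent vector $m_k=\sum_{i\leq k<j}\theta_{i,j}$ of the true variables $y_k=x_{k+1}/x_k$, and distinct matrices collide --- e.g.\ $\theta_{1,2}=\theta_{2,3}=1$ and $\theta_{1,3}=1$ (all other entries zero) both give $(x_2/x_1)(x_3/x_2)=x_3/x_1$. Hence ``reading off the coefficient of $\theta$'' is not a well-defined operation; expanding the eigen-equation in $y_1,\ldots,y_{n-1}$ yields one equation per exponent vector $(m_1,\ldots,m_{n-1})$, and these constrain only the aggregated sums $\sum_{\theta} c_n(\theta)$ over each fiber $\{\theta \,:\, \sum_{i\leq k<j}\theta_{i,j}=m_k \text{ for all } k\}$, never the individual coefficients $c_n(\theta)$. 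Your displayed per-$\theta$ recursion is therefore neither implied by nor equivalent to the eigenvalue equation, so the step ``it therefore suffices to check that the explicit product formula satisfies this recursion'' does not follow; the identity that actually needs proving mixes all $\theta$ in a fiber and does not collapse to a single Pochhammer ratio $c_n(\theta)/c_n(\theta')$ feeding a one-variable balanced summation. That aggregated identity is precisely the nontrivial content of the cited references (established by a direct but lengthy computation in \cite{NS}, and geometrically via Laumon spaces in \cite{BFS}), and your final paragraph --- induction on $n$ with a $q$-binomial base case --- only gestures at it. So even after repairing the indexing problem, the proof is missing its essential step.
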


\begin{dfn}Set
\begin{gather*}
 c^{\rm Toda}_n(\theta;s_1,\ldots,s_n;q)=\lim_{t\rightarrow 0 }c_n(\theta;s_1,\ldots,s_n;q,t)
\prod_{1\leq i<j\leq n} t^{( j-i)\theta_{i,j}}\\
{} =
\prod_{k=2}^{n}
\prod_{1\leq i\leq j\leq k-1}
{1\over
\Big(q^{\sum\limits_{a=k+1}^n(\theta_{i,a}-\theta_{j+1,a})}qs_{j+1}/s_i;q\Big)_{\theta_{i,k}}}
{q^{\theta_{i,k}}
\over
\Big(q^{\theta_{j,k}-\theta_{i,k}-\sum\limits_{a=k+1}^n(\theta_{i,a}-\theta_{j,a})}q s_i/s_j;q\Big)_{\theta_{i,k}}},
\end{gather*}
and
\begin{gather*}
 f^{A_{n-1}{\rm Toda}}(x|s|q)
= \sum_{\theta\in\mathsf{M}^{(n)}}c^{\rm Toda}_n(\theta;s;q)
\prod_{1\leq i<j \leq n} (x_j/x_i)^{\theta_{i,j}}.
\end{gather*}
\end{dfn}

\begin{cor}
We have
\begin{gather*}
\mathsf{D}^{A_{n-1}{\rm Toda}}(x|s|q) f^{A_{n-1}{\rm Toda}}(x|s|q)=
\sum_{i=1}^n s_i f^{A_{n-1}{\rm Toda}}(x|s|q).
\end{gather*}
\end{cor}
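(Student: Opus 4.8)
The plan is to obtain the identity as the $t\to 0$ degeneration of the eigenvalue equation for the Macdonald operator of type $A_{n-1}$ recorded in the preceding proposition of \cite{BFS, NS},
\begin{gather*}
\mathsf{D}^{A_{n-1}}(x|s|q,t)\,f^{A_{n-1}}(x|s|q,t)=\Big(\sum_{i=1}^n s_i\Big)f^{A_{n-1}}(x|s|q,t).
\end{gather*}
A point to settle at the outset is that, although this equation is stated for the special values $s=t^{\delta}q^{\lambda}$, both $f^{A_{n-1}}$ and its coefficients $c_n(\theta;s;q,t)$ are rational in $s$, and for a fixed generic $t$ the set $\{t^{\delta}q^{\lambda}\mid\lambda\in\mathbb{C}^n\}$ is Zariski dense; hence the equation holds identically in a parameter $s$ that I may and will treat as \emph{independent} of $t$. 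This is what makes the limit $t\to 0$ meaningful, with $s$ and the eigenvalue $\sum_i s_i$ frozen. The two degeneration inputs are already at hand: the operator limit $\mathsf{D}^{A_{n-1}\mathrm{Toda}}(x|s|q)=\lim_{t\to 0}\mathsf{D}^{A_{n-1}}(t^{-\delta}x|s|q,t)$ and the coefficient limit $c^{\mathrm{Toda}}_n(\theta;s;q)=\lim_{t\to 0}c_n(\theta;s;q,t)\prod_{1\le i<j\le n}t^{(j-i)\theta_{i,j}}$, and the rescaling $x\mapsto t^{-\delta}x$ is the bridge between them.

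First I would record the effect of this rescaling. Writing $\Phi_t$ for the substitution $(\Phi_t h)(x)=h(t^{-\delta}x)$ with $(t^{-\delta}x)_i=t^{-(n-i)}x_i$, I note that $\Phi_t$ commutes with every $q$-shift $T_{q,x_i}$, since both act multiplicatively, while conjugation replaces the argument of each coefficient function by $t^{-\delta}x$; therefore $\Phi_t\,\mathsf{D}^{A_{n-1}}(x|s|q,t)\,\Phi_t^{-1}=\mathsf{D}^{A_{n-1}}(t^{-\delta}x|s|q,t)$. Applying $\Phi_t$ to the eigenvalue equation turns it, for every $t$, into
\begin{gather*}
\mathsf{D}^{A_{n-1}}(t^{-\delta}x|s|q,t)\,\big(\Phi_t f^{A_{n-1}}\big)=\Big(\sum_{i=1}^n s_i\Big)\big(\Phi_t f^{A_{n-1}}\big).
\end{gather*}
Because $(t^{-\delta}x)_j/(t^{-\delta}x)_i=t^{j-i}(x_j/x_i)$ for $i<j$, the rescaled eigenfunction is
\begin{gather*}
\Phi_t f^{A_{n-1}}(x|s|q,t)=\sum_{\theta\in\mathsf{M}^{(n)}}c_n(\theta;s;q,t)\prod_{1\le i<j\le n}t^{(j-i)\theta_{i,j}}\,(x_j/x_i)^{\theta_{i,j}},
\end{gather*}
whose $t\to 0$ limit, taken monomial by monomial in the ratios $x_j/x_i$, is precisely $f^{A_{n-1}\mathrm{Toda}}(x|s|q)$ by the definition of $c^{\mathrm{Toda}}_n$.

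It then remains to pass to the limit $t\to 0$ in the conjugated equation: the operator tends to $\mathsf{D}^{A_{n-1}\mathrm{Toda}}(x|s|q)$ by the operator limit above, the eigenfunction tends to $f^{A_{n-1}\mathrm{Toda}}(x|s|q)$ by the previous step, and the eigenvalue $\sum_i s_i$ carries no $t$, which yields the assertion. The step I expect to be the main obstacle is the justification of this termwise passage, namely that $\lim_{t\to 0}$ commutes with the action of the operator and with the infinite sum over $\theta$. I would handle it through the grading of $\mathbb{Q}(s,q,t)[[x_2/x_1,\ldots,x_n/x_{n-1}]]$ by total degree in the base ratios $x_{k+1}/x_k$: both $\mathsf{D}^{A_{n-1}}(t^{-\delta}x|s|q,t)$ and its limit are non-decreasing with respect to this degree, sending a monomial of degree $d$ to a series supported in degrees $\ge d$. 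Consequently the coefficient of any fixed monomial on each side of the conjugated equation is a \emph{finite} sum of products of operator matrix elements with coefficients of $\Phi_t f^{A_{n-1}}$, each a rational function of $t$ that is regular at $t=0$. The limit may therefore be taken degree by degree; holding in every degree, it holds as an identity of formal power series, which is exactly the corollary.
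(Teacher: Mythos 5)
Your proof is correct and is exactly the argument the paper intends: the corollary is presented there as an immediate consequence of the operator limit $\mathsf{D}^{A_{n-1}{\rm Toda}}(x|s|q)=\lim_{t\to 0}\mathsf{D}^{A_{n-1}}\big(t^{-\delta}x|s|q,t\big)$, the coefficient limit defining $c^{\rm Toda}_n$, and the Macdonald eigenvalue equation, which is precisely the degeneration you carry out. Your additional care (Zariski density to treat $s$ as independent of $t$, and the degree-by-degree justification of the termwise limit) fills in details the paper leaves implicit, and does so correctly.
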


\subsubsection[$B_n$ case]{$\boldsymbol{B_n}$ case}
\begin{dfn}
Set
\begin{gather*}
D^{B_n}(x|q,t)= \sum_{i=1}^n {(1-tx_i)\over
 t^{n-1/2}(1-x_i)}
\prod_{j\neq i} {(1-t x_ix_j)(1-t x_i/x_j)\over (1-x_ix_j)(1-x_i/x_j)}
T_{q,x_i}^{+1} \\
\hphantom{D^{B_n}(x|q,t)=}{} +
\sum_{i=1}^n {(1-t/x_i)\over
 t^{n-1/2}(1-1/x_i)}
\prod_{j\neq i} {(1-t x_j/x_i)(1-t /x_ix_j)\over (1-x_j/x_i)(1-1/x_ix_j)}
T_{q,x_i}^{-1}.
\end{gather*}
\end{dfn}
Then we have the Koornwinder operator for the parameter
$(a,b,c,d)=\big(t,-1,q^{1/2},-q^{1/2}\big)$ as
\begin{gather*}
{\mathcal D}_x\big(t,-1,q^{1/2},-q^{1/2}|q,t\big)= D^{B_n}(x|q,t)- {t^n-t^{-n}\over t^{1/2}-t^{-1/2}}.
\end{gather*}

\begin{dfn}
Set
\begin{gather*}
 \mathsf{D}^{B_n}(x|s|q,t)\\
\qquad{}
= \sum_{i=1}^n s_i
{1-1/tx_i\over
1-1/x_i}
\prod_{j<i} {(1-1/t x_ix_j)(1-t x_i/x_j)\over (1-1/x_ix_j)(1-x_i/x_j)}
\prod_{k>i} {(1-1/t x_ix_k)(1- x_k/tx_i)\over (1-1/x_ix_k)(1-x_k/x_i)}
T_{q,x_i}^{+1} \\
\qquad\quad{} +
\sum_{i=1}^n s_i^{-1}
{1-t/x_i\over
1-1/x_i}
\prod_{j< i} {(1- x_i/tx_j)(1-t /x_ix_j)\over (1-x_i/x_j)(1-1/x_ix_j)}
\prod_{k>i} {(1-t x_k/x_i)(1-t /x_ix_k)\over (1-x_k/x_i)(1-1/x_ix_k)}
T_{q,x_i}^{-1}.
\end{gather*}
\end{dfn}
We have
\begin{gather*}
\mathsf{D}^{B_{n}}(x|s|q,t)=x^{-\lambda}D^{B_{n}}(x|q,t)x^\lambda,
\end{gather*}
where $s=q^\lambda t^{\delta+1/2}$, namely $s_i=q^{\lambda_i} t^{n-i+1/2}$.

\begin{dfn}
Set
\begin{gather*}
\mathsf{D}^{B_n{\rm Toda}}(x|s|q)=
\sum_{i=1}^{n-1}s_i (1-x_{i+1}/x_i)T_{q,x_i}+
s_n (1-1/x_n)T_{q,x_n} \\
\hphantom{\mathsf{D}^{B_n{\rm Toda}}(x|s|q)=}{} +
s_1^{-1}T_{q,x_1}^{-1}+
\sum_{i=2}^n s_i^{-1} (1-x_i/x_{i-1})T_{q,x_i}^{-1}.
\end{gather*}
\end{dfn}

\begin{prp}
We have
\begin{gather*}
\mathsf{D}^{B_n{\rm Toda}}(x|s|q)=
\lim_{t\rightarrow 0}\mathsf{D}^{B_n}\big(t^{-\delta-1}x|s|q,t\big),
\end{gather*}
where $t^{-\delta-1}x$ means $\big(t^{-n}x_1,t^{-n+1}x_2,\ldots, t^{-1}x_n\big)$.
\end{prp}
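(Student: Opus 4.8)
The plan is to mirror the proof of the corresponding $A_{n-1}$ proposition: substitute $x_i \mapsto t^{-(n-i+1)}x_i$, the $i$-th component of $t^{-\delta-1}x$, into $\mathsf{D}^{B_n}(x|s|q,t)$ and compute the $t\to 0$ limit of each coefficient of $T_{q,x_i}^{\pm 1}$ separately. Under this scaling the elementary ratios transform as $x_i/x_j \mapsto t^{i-j}x_i/x_j$, $x_ix_j \mapsto t^{i+j-2n-2}x_ix_j$ and $1/x_i \mapsto t^{n+1-i}/x_i$, so every factor of the form $(1-y)$ becomes $(1-t^m z)$ for an integer exponent $m$ and a monomial $z$. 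The governing principle is that a factor with $m\geq 1$ tends to $1$, a factor with $m=0$ survives as $1-z$, and, crucially, no exponent is ever negative, so the limit is finite and no term diverges.

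I would then carry out the bookkeeping term by term. For the forward coefficient of $T_{q,x_i}^{+1}$ the one-body prefactor $s_i(1-1/tx_i)/(1-1/x_i)$ becomes $s_i(1-t^{n-i}/x_i)/(1-t^{n+1-i}/x_i)$, which tends to $s_i$ for $i<n$ and to $s_n(1-1/x_n)$ for $i=n$; in the product over $j<i$ every exponent is at least $1$, so it tends to $1$; in the product over $k>i$ only the factor $k=i+1$ has exponent $0$, contributing $1-x_{i+1}/x_i$. This reproduces exactly $s_i(1-x_{i+1}/x_i)T_{q,x_i}$ for $i<n$ and $s_n(1-1/x_n)T_{q,x_n}$ for $i=n$, matching the first two sums of $\mathsf{D}^{B_n{\rm Toda}}$.

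For the backward coefficient of $T_{q,x_i}^{-1}$ the prefactor $s_i^{-1}(1-t/x_i)/(1-1/x_i)$ becomes $s_i^{-1}(1-t^{n+2-i}/x_i)/(1-t^{n+1-i}/x_i)$, with both exponents at least $1$, hence it tends to $s_i^{-1}$; the product over $k>i$ tends to $1$; and in the product over $j<i$ only the factor $j=i-1$ has exponent $0$, contributing $1-x_i/x_{i-1}$ for $i\geq 2$, while for $i=1$ the product is empty. This yields $s_1^{-1}T_{q,x_1}^{-1}$ together with $s_i^{-1}(1-x_i/x_{i-1})T_{q,x_i}^{-1}$ for $i\geq 2$, matching the remaining two sums of $\mathsf{D}^{B_n{\rm Toda}}$.

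The main obstacle, and essentially the only point demanding care, is confirming that every exponent of $t$, in particular those in the denominators, is nonnegative, so that the limit exists and is the Toda operator rather than a divergent expression. This is precisely where the shift $t^{-\delta-1}$ (in place of $t^{-\delta}$ as in type $A$) is forced: the extra $+1$ is dictated by the additional one-body factor $(1-1/tx_i)/(1-1/x_i)$ present in the $B_n$ operator, and it is exactly what makes the $i=n$ forward prefactor degenerate to $1-1/x_n$ while giving the two-body reflection factors built from $x_ix_j$ and $1/x_ix_j$ the large exponents $2n+1-i-j$ and $2n+2-i-j$. Since $i+j\leq 2n-1$ and $i<k$ throughout, all such exponents are at least $2$, so every two-body interaction washes out in the limit, leaving only the nearest-neighbor $A$-type couplings and the single end term. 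This completes the identification of the limit with $\mathsf{D}^{B_n{\rm Toda}}(x|s|q)$.
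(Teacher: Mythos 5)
Your proof is correct. The paper actually states this proposition without any proof (as it does for the analogous $A_{n-1}$ limit), treating it as a routine verification, and your term-by-term computation is exactly that verification: the substitution $x_i\mapsto t^{i-n-1}x_i$, the resulting exponents $n-i$, $n+1-i$, $k-i-1$, $i-j-1$, $2n+1-i-j$, etc., and the identification of the surviving exponent-zero factors ($1-x_{i+1}/x_i$ from $k=i+1$ in the forward terms, $1-1/x_n$ from the one-body factor at $i=n$, and $1-x_i/x_{i-1}$ from $j=i-1$ in the backward terms) all check out, as does your observation that the shift $-\delta-1$ rather than $-\delta$ is what keeps every exponent nonnegative and kills all long-range and reflection factors.
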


\begin{dfn}
The asymptotically free eigenfunction $f^{B_n{\rm Toda}}(x|s|q)$ of type $B_n$ $q$-Toda system is defined as
\begin{gather*}
 f^{B_n{\rm Toda}}(x|s|q)=\sum_{i_1,\ldots,i_n\geq 0} c^{B_n}_{i_1,\ldots,i_n}(s_1,\ldots,s_n,q)(x_2/x_1)^{i_1}\cdots(x_n/x_{n-1})^{i_{n-1}}\cdots (1/x_n)^{i_n},\\
 \mathsf{D}^{B_n{\rm Toda}}(x|s|q)f^{B_n{\rm Toda}}(x|s|q)=(s_1+\cdots+s_n+1/s_1+\cdots+1/s_n)f^{B_n{\rm Toda}}(x|s|q).
\end{gather*}
\end{dfn}

\begin{cnj}
Let $\theta=(\theta_1,\ldots,\theta_n)$. Assume that $f^{B_n{\rm Toda}}(x|s|q)$ is expanded in terms of $f^{A_{n-1}{\rm Toda}}(x|s|q)$
as the following branching formula
\begin{gather*}
 f^{B_n{\rm Toda}}(x_1,\ldots,x_n|s_1,\ldots,s_n|q)\\
\qquad{} =
\sum_{\theta_1,\ldots,\theta_n\geq 0}e^{B_n/A_{n-1}}_{\theta}(s|q) \cdot \prod_{i=1}^n x_i^{-\theta_i} \cdot
f^{A_{n-1}{\rm Toda}}\big(x_1,\ldots ,x_n|q^{-\theta_1}s_1,\ldots ,q^{-\theta_n}s_n|q\big).
\end{gather*}
Then we have
\begin{gather*}
 e^{B_n/A_{n-1}}_{\theta}(s|q)=
\prod_{k=1}^n
{q^{(n-k+1)\theta_k} \over (q)_{\theta_k}\big(q/s_k^2\big)_{\theta_k}}\cdot
\prod_{1\leq i<j\leq n}
{1\over (q s_j/s_i)_{\theta_i} \big(q^{\theta_j-\theta_i} q s_i/s_j\big)_{\theta_i}}
{(q/s_is_j)_{\theta_i+\theta_j}\over (q/s_is_j)_{\theta_i} (q/s_is_j)_{\theta_j}}.
\end{gather*}
\end{cnj}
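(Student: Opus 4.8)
The plan is to pin down $e^{B_n/A_{n-1}}_{\theta}(s|q)$ by the ``substitute and match'' strategy, leaning on the uniqueness of the asymptotically free eigenfunction. First I would record that, for generic $s$, the eigenvalue equation $\mathsf{D}^{B_n{\rm Toda}}(x|s|q)f=\big(\sum_i s_i+\sum_i s_i^{-1}\big)f$ together with the normalization $c^{B_n}_{0,\ldots,0}=1$ determines $f^{B_n{\rm Toda}}(x|s|q)$ uniquely as a formal series in the asymptotic variables $x_2/x_1,\ldots,x_n/x_{n-1},1/x_n$. On the conjectural right-hand side each summand $\prod_i x_i^{-\theta_i}\,f^{A_{n-1}{\rm Toda}}(x|q^{-\theta}s|q)$ is itself such a series, and since $x_i^{-1}=\prod_{j\ge i}(x_{j+1}/x_j)\cdot(1/x_n)$ has positive asymptotic degree, the monomial prefactors strictly raise the degree for $\theta\neq0$. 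Hence the double sum is a well-defined formal series, it is triangular, and its leading term is the $\theta=0$, $\mu=0$ contribution $e^{B_n/A_{n-1}}_{0}\cdot 1=1$ (the product formula gives $e^{B_n/A_{n-1}}_{0}=1$ from empty products). Thus the normalization already matches, and it remains only to show the right-hand side is an eigenfunction with the correct eigenvalue; equality then follows from uniqueness.

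To check the eigenvalue equation I would split $\mathsf{D}^{B_n{\rm Toda}}=\mathsf{D}^{A_{n-1}{\rm Toda}}+\Delta$, where
\[
\Delta=-\,s_n x_n^{-1}T_{q,x_n}+s_1^{-1}T_{q,x_1}^{-1}+\sum_{i=2}^{n}s_i^{-1}\big(1-x_i/x_{i-1}\big)T_{q,x_i}^{-1}
\]
collects exactly the terms absent from the $A_{n-1}$ operator. The elementary but decisive observation is that $\mathsf{D}^{A_{n-1}{\rm Toda}}(x|s|q)$ acts diagonally on each summand: commuting the prefactor $\prod_i x_i^{-\theta_i}$ through the forward shift $T_{q,x_i}$ turns the coefficient $s_i$ into $q^{-\theta_i}s_i$, so by the $A_{n-1}$ eigenvalue equation the summand is an eigenfunction with eigenvalue $\sum_i q^{-\theta_i}s_i$. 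Consequently the eigenvalue defect $\sum_i s_i\big(1-q^{-\theta_i}\big)+\sum_i s_i^{-1}$ must be reproduced by re-expanding $\Delta$ applied to the $\theta$-summand back in the basis $\big\{\prod_i x_i^{-\theta'_i}f^{A_{n-1}{\rm Toda}}(x|q^{-\theta'}s|q)\big\}_{\theta'}$. Matching the coefficient of each basis element yields a nearest-neighbour recursion for the $e^{B_n/A_{n-1}}_{\theta}$ (raising or lowering a single $\theta_i$ by one), which, because $\Delta$ strictly raises the asymptotic degree, expresses each $e^{B_n/A_{n-1}}_{\theta'}$ through coefficients of smaller weight $|\theta|=\sum_i\theta_i$; the conjectured product formula can then be verified to solve it by induction on $|\theta|$, reducing to a manipulation of $q$-shifted factorials.

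The main obstacle is the middle step: expressing $\Delta\big(\prod_i x_i^{-\theta_i}f^{A_{n-1}{\rm Toda}}(x|q^{-\theta}s|q)\big)$ in the shifted-eigenfunction basis. In contrast to the forward shifts, the backward shifts $T_{q,x_i}^{-1}$ and the correction $x_n^{-1}T_{q,x_n}$ are not diagonal and do not manifestly carry one $A_{n-1}$ eigenfunction to another, so controlling them requires contiguity (spectral-shift) relations for $f^{A_{n-1}{\rm Toda}}(x|s|q)$ in the parameters $s$, equivalently the bispectral/dual difference structure of the $A_{n-1}$ $q$-Toda chain; one must also confirm that $\Delta$ produces only the finitely many nearest-neighbour moves needed for the recursion to close. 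I expect the genuine work to lie exactly here. As a cross-check and an alternative organizing principle I would run the argument by induction on $n$, peeling off one variable and using the Askey--Wilson asymptotically free eigenfunction of the appendix as the $n=1$ base case, whose $c_o^{\rm new}$ coefficients supply the one-variable branching data; once the shift relations are secured, the final identification of $e^{B_n/A_{n-1}}_{\theta}$ is a routine, if lengthy, $q$-Pochhammer computation.
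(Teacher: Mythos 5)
The statement you are addressing is presented in the paper as a \emph{conjecture}: the authors give no proof of it at all (only the explicit expectations for $e^{B_2/A_1}_{\theta}$ and $e^{B_3/A_2}_{\theta}$), so there is no argument of theirs to compare yours against, and any complete proof would be new mathematics rather than a reconstruction. Your proposal, however, is not such a proof; it is a strategy with the decisive step left open, as you yourself acknowledge (``I expect the genuine work to lie exactly here''). The parts that can be checked are correct: the splitting $\mathsf{D}^{B_n{\rm Toda}}=\mathsf{D}^{A_{n-1}{\rm Toda}}+\Delta$ with $\Delta=-s_nx_n^{-1}T_{q,x_n}+s_1^{-1}T_{q,x_1}^{-1}+\sum_{i=2}^{n}s_i^{-1}(1-x_i/x_{i-1})T_{q,x_i}^{-1}$ is right; commuting $\prod_i x_i^{-\theta_i}$ through the forward shifts does show that each summand is an eigenfunction of $\mathsf{D}^{A_{n-1}{\rm Toda}}(x|s|q)$ with eigenvalue $\sum_i q^{-\theta_i}s_i$; the product formula does give $e^{B_n/A_{n-1}}_{0}=1$; and the degree count (each $x_i^{-1}$ having positive asymptotic degree) does make the double sum a well-defined formal series whose expansion in the shifted-eigenfunction family is triangular.

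The genuine gap is exactly the step you defer. To turn the eigenvalue equation into a recursion for the $e^{B_n/A_{n-1}}_{\theta}$ you must expand $\Delta\bigl(\prod_i x_i^{-\theta_i} f^{A_{n-1}{\rm Toda}}(x|q^{-\theta}s|q)\bigr)$ back in the family $\bigl\{\prod_i x_i^{-\theta'_i} f^{A_{n-1}{\rm Toda}}(x|q^{-\theta'}s|q)\bigr\}$ with \emph{explicit} coefficients; this requires contiguity (parameter-shift) relations describing how $T_{q,x_i}^{-1}$ and $x_n^{-1}T_{q,x_n}$ act on $f^{A_{n-1}{\rm Toda}}$, and nothing of the sort is established, or even precisely formulated, in your proposal (nor is it available in the paper). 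Your claim that the resulting recursion ``expresses each $e_{\theta'}$ through coefficients of smaller weight'' is also too quick: the backward shifts $T_{q,x_i}^{-1}$ preserve the $1/x_n$-degree, so the coefficient equations couple all $\theta$ of the \emph{same} weight $|\theta|$, and one would additionally have to argue triangularity of this within-weight coupling (say, with respect to the partial order on the partial sums $\theta_1+\cdots+\theta_k$) and non-vanishing of the resulting diagonal factors for generic $s$ before uniqueness of the solution even makes sense. Finally, even granting all of this, the verification that the conjectured product of $q$-shifted factorials actually solves the recursion is asserted to be routine but is not carried out — and that computation is precisely the content of the conjecture. In short: a sensible research plan, correctly framed, but with the mathematical core missing; the statement remains open after your proposal exactly as it was before.
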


For example, we expect that
\begin{gather*}
 e^{B_2/A_{1}}_{(\theta_1,\theta_2)}(s_1,s_2|q)=
{q^{2\theta_1} \over (q)_{\theta_1}\big(q/s_1^2\big)_{\theta_1}}
{q^{\theta_2} \over (q)_{\theta_2}\big(q/s_2^2\big)_{\theta_2}}
{1\over (q s_2/s_1)_{\theta_1} \big(q^{\theta_2-\theta_1} q s_1/s_2\big)_{\theta_1}}\\
\hphantom{e^{B_2/A_{1}}_{(\theta_1,\theta_2)}(s_1,s_2|q)= }{}\times
{(q/s_1s_2)_{\theta_1+\theta_2}\over (q/s_1s_2)_{\theta_1} (q/s_1s_2)_{\theta_2}},\\
 e^{B_3/A_{2}}_{(\theta_1,\theta_2,\theta_3)}(s_1,s_2,s_3|q) =
 {q^{3\theta_1} \over (q)_{\theta_1}\big(q/s_1^2\big)_{\theta_1}}
{q^{2\theta_2} \over (q)_{\theta_2}\big(q/s_2^2\big)_{\theta_2}}
{q^{\theta_3} \over (q)_{\theta_3}\big(q/s_3^2\big)_{\theta_3}}\\
\hphantom{e^{B_3/A_{2}}_{(\theta_1,\theta_2,\theta_3)}(s_1,s_2,s_3|q) =}{}
 \times
{1\over (q s_2/s_1)_{\theta_1} \big(q^{\theta_2-\theta_1} q s_1/s_2\big)_{\theta_1}}
{1\over (q s_3/s_1)_{\theta_1} \big(q^{\theta_3-\theta_1} q s_1/s_3\big)_{\theta_1}}\\
\hphantom{e^{B_3/A_{2}}_{(\theta_1,\theta_2,\theta_3)}(s_1,s_2,s_3|q) =}{}
 \times
{1\over (q s_3/s_2)_{\theta_2} \big(q^{\theta_3-\theta_2} q s_2/s_3\big)_{\theta_2}}
{(q/s_1s_2)_{\theta_1+\theta_2}\over (q/s_1s_2)_{\theta_1} (q/s_1s_2)_{\theta_2}}\\
\hphantom{e^{B_3/A_{2}}_{(\theta_1,\theta_2,\theta_3)}(s_1,s_2,s_3|q) =}{}
 \times
{(q/s_1s_3)_{\theta_1+\theta_3}\over (q/s_1s_3)_{\theta_1} (q/s_1s_3)_{\theta_3}}
{(q/s_2s_3)_{\theta_2+\theta_3}\over (q/s_2s_3)_{\theta_2} (q/s_2s_3)_{\theta_3}}.
\end{gather*}

\subsection*{Acknowledgements}
Research of A.H.\ is supported by JSPS KAKENHI (Grant Number 16K05186 and 19K03530).
Research of J.S.\ is supported by JSPS KAKENHI (Grant Numbers 15K04808, 19K03512, 16K05186 and 19K03530).
The authors thank M.~Noumi and L.~Rybnikov for stimulating discussion. They also thank the anonymous referees for valuable comments and
suggestions, including the proof of Theorem~\ref{Bressoud-3} based on Bressoud's matrix inversion.

\pdfbookmark[1]{References}{ref}
\LastPageEnding

\end{document}